\author{Julien Melleray}
\address{Universit\'e Claude Bernard -- Lyon 1 \\
  Institut Camille Jordan, CNRS UMR 5208 \\
  43 boulevard du 11 novembre 1918 \\
  69622 Villeurbanne Cedex \\
  France}
\numberwithin{equation}{section}
\newcommand{\Free}{\mathrm{Free}}
\title{Clopen type semigroups of actions on $0$-dimensional compact spaces}
\begin{document}
\begin{abstract}
We investigate properties of the clopen type semigroup of an action of a countable group on a compact, $0$-dimensional, Hausdorff space $X$. We discuss some characterizations of dynamical comparison (most of which were already known in the metrizable case) in this setting; and prove that for a Cantor minimal action $\alpha$ of an amenable group the topological full group of $\alpha$ admits a dense, locally finite subgroup iff the corresponding clopen type semigroup is unperforated. We also discuss some properties of clopen type semigroups of the Stone-\v{C}ech compactifications  and universal minimal flows of countable groups, and derive some consequences on generic properties in the space of minimal actions of a given countable group on the Cantor space.
\end{abstract}
\maketitle


\section{Introduction}

Given an action $\alpha \colon \Gamma \actson X$ of a countable group $\Gamma$ on a compact, Hausdorff, $0$-dimensional space $X$, and two clopen subsets $A,B$ of $X$, an interesting question is whether one can \emph{equidecompose} $A$ into $B$, that is, whether there exists a clopen partition $A=\bigsqcup_{i=1}^n A_i$ and $\gamma_1,\ldots,\gamma_n \in \Gamma$ such that $\bigsqcup_{i=1}^n \gamma_i A_i \subset B$. 

An important and well-studied particular case of this section occurs when $X$ is the Stone-\v{C}ech compactification $\beta \Gamma$ of $\Gamma$: the action $\Gamma \actson \beta \Gamma$ really is the action of $\Gamma$ on the powerset of $\Gamma$ in disguise. Equidecomposability problems in this setting have been studied since the beginning of the twentieth century, in particular in connection with the Banach-Tarski paradox and paradoxical decompositions. 

At the other extreme, the case where $X$ is the Cantor space and $\Gamma=\Z$ acts minimally on $X$ is also well-studied, and connected to classification of such actions up to flip-conjugacy and orbit equivalence as well as operator algebraic properties (see \cite{GPS_OE} or the book \cite{Putnam2018}). A key fact is that such actions have the \emph{dynamical comparison property} introduced in\cite{Buck2013} and \cite{Kerr2020}: if $\alpha \colon \Z \actson X$ is a minimal action on the Cantor space, and $A$, $B$ are clopen such that $\mu(A)< \mu(B)$ for every $\alpha$-invariant Radon probability measure, then a result of Glasner and Weiss \cite{GlasnerWeiss} asserts that one can equidecompose $A$ into $B$. The dynamical comparison property has been intensively studied in recent years following work of Kerr that established its relevance to problems related to operator algebras. Among notable recent results, we mention that Downarowicz and Zhang \cite{Downarowicz_Zhang} proved that every action of a locally subexponential group on a compact $0$-dimensional metrizable Hausdorff space has dynamical comparison; Naryshkin \cite{Naryshkin2021} proved the same result for actions of groups of polynomial growth on arbitrary compact metrizable spaces (though the definition of dynamical comparison we give here only applies to $0$-dimensional spaces, one can extend it to arbitrary compact metrizable spaces, see \cite{Kerr2020}). Kerr and Narishkyn \cite{Kerr_Narishkyn} showed that every free action of an elementary amenable group on a compact metrizable space has dynamical comparison.

Recall that the \emph{topological full group} $\llbracket \alpha \rrbracket$ of $\alpha \colon \Gamma \actson X$ is the group of all homeomorphisms $g$ such that there exist a clopen partition $X=\bigsqcup_{i=1}^n A_i$ and $\gamma_1,\ldots,\gamma_n \in \Gamma$ with $g(x)=\alpha(\gamma_i)(x)$ for all $x \in A_i$.  In the case of a minimal $\Z$-action $\alpha$ on the Cantor space, $\llbracket \alpha \rrbracket$ admits a natural dense, locally finite subgroup, made up of all the elements of $\llbracket \alpha \rrbracket$ which preserve the positive semi-orbit of some fixed element $x_0$; these dense locally finite subgroups play an important part in the Giordano--Putnam--Skau classification of minimal Cantor $\Z$-actions up to orbit equivalence (see \cite{GPS_OE} and \cite{GPS_fullgroups} for the original approach, or \cite{mellerayInvariantMeasuresOrbit2021} and its references for another point of view). Whether such a classification is valid for all countable amenable groups is an open problem: it is not known if every minimal Cantor action of a countable amenable group is orbit equivalent to a $\Z$-action. A weaker property is also not known to hold in general: given a minimal Cantor action $\alpha$ of a countable amenable group, does there exist a $\Z$-action which has the same invariant Borel probability measures as $\alpha$? If
$\alpha$ has dynamical comparison then it follows from a result of Ibarluc\'ia and the author (see \cite{Melleray2017}, \cite{Melleray_dynamicalsimplices}) that the answer to this question is positive (see Section \ref{s:measured_comparison}, where we reformulate this property as a weaker version of dynamical comparison). 

It was noted by Kerr in \cite{Kerr2020} that dynamical comparison for an action $\alpha \colon \Gamma \actson X$ is related to a property of its \emph{clopen type semigroup}; loosely speaking, this is the free monoid generated by nonempty clopen subsets of $X$, quotiented by the relation $\sum_{i=1}^n A_i \sim \sum_{j=1}^n B_j$ if one can equidecompose $\bigsqcup A_i \times \{i\}$ and $\bigsqcup B_j \times \{j\}$ using the action of $\Gamma \times \mathfrak S_n$ on $X \times \{1,\ldots,n\}$ (see the next section for more details). One then obtains a \emph{refinement monoid} which we denote $T(\alpha)$; for $a,b \in T(\alpha)$ we write $a \le b$ when there exists $c \in T(\alpha)$ such that $b=a+c$. This monoid is closely related to the $0$-homology group $H(\alpha)$ of the action; indeed we observe in Section \ref{s:homology} that if $T(\alpha)$ is \emph{cancellative} (i.e. such that $a+b=a+c \Rightarrow b=c$) then $T(\alpha)$ is precisely the positive cone of $H(\alpha)$.

Type semigroups were introduced by Tarski to study equidecomposition problems, and the clopen version was first considered in an article of R\o rdam and Sierakowski \cite{Rordam_Sierakowski}; \cite{Wagon2016} is a nice, modern reference on type semigroups and \cite{Wehrung2017} contains a wealth of information on refinement monoids. Kerr observed in \cite{Kerr2020} that a free minimal action $\alpha$ of a countable group $\Gamma$ on a Cantor space $X$ has dynamical comparison as soon as $T(\alpha)$ is \emph{almost unperforated}, i.e.~ one has for all $n \in \N$ and all $a,b \in T(\alpha)$ that $(n+1)a \le nb \Rightarrow a \le b$. Later, Ma proved in \cite{Ma2021} that, for minimal actions on compact metrizable spaces (even, not $0$-dimensional, using a more general definition of the type semigroup) almost unperforation and dynamical comparison are equivalent; even for non-minimal actions, Ma \cite{Ma2021} proved that, when the acting group is amenable, dynamical comparison is equivalent to a condition slightly weaker than almost unperforation. Recently, Ara, Bönicke, Bosa and Li established in \cite{Ara_Bonicke_Bosa_Li} a similar result, valid for second-countable ample groupoids. 

In this article, we note that this analysis can also be carried out when the ambient compact space is not metrizable (but is $0$-dimensional), and give an elementary proof of a characterization of dynamical comparison for such actions as a property of the clopen type semigroup. This leads in particular to the following result (due to Ma in the metrizable case, see \cite{Ma2019} and \cite{Ma2021}) .

\begin{theorem*}[see Lemma \ref{l:carac_comparison_unperforation}, Lemma \ref{l:noinvmeasure} and Proposition \ref{p:minimal_dynamical_unperforated}]~

Assume that $\Gamma \actson X$ is an action of a countable group $\Gamma$ on a compact, $0$-dimensional, Hausdorff space $X$. 
\begin{itemize}
    \item If there are no $\alpha$-invariant Radon probability measures on $X$, then $\alpha$ has dynamical comparison iff $\alpha$ is minimal and $a \le b$ for every nonzero $a,b \in T(\alpha)$.
    \item If $\Gamma$ is amenable, then $\alpha$ has dynamical comparison iff for every order unit $b \in T(\alpha)$ and every $a$ such that $(n+1)a \le nb$ for some $n \in \N$, one has $a \le b$.
    \item If $\alpha$ is minimal then $\alpha$ has dynamical comparison iff $T(\alpha)$ is almost unperforated.
\end{itemize}
\end{theorem*}
(one says that $b$ is an order unit in $T(\alpha)$ iff for every $a \in T(\alpha)$ there is some $k \in \N$ such that $a \le kb$)

There is a significant literature on refinement monoids (see \cite{Wehrung2017} and references therein), and some of it has interesting applications to our setting. In particular, this can be used to give another equivalent formulation of dynamical comparison when $\alpha$ is minimal, as well as conclude that if $\alpha$ is minimal, preserves some invariant Radon probability measure and has dynamical comparison then $T(\alpha)$ is \emph{cancellative}, i.e.~ whenever $a+b=a+c$ in $T(\alpha)$ one has $b=c$. Interestingly I do not know any direct argument to prove this, nor do I know whether clopen type semigroups of minimal Cantor actions of countable amenable groups are always cancellative. 

It is fairly easy to see that if $\llbracket \alpha \rrbracket$ admits a dense locally finite subgroup then $T(\alpha)$ is cancellative and \emph{unperforated}, i.e.~ $na \le nb \Rightarrow a \le b$ for every $n \in \N^*$ and every $a,b \in T(\alpha)$. Exploiting a connection between $T(\alpha)$ and the $0$-homology group of $\alpha$ (see section \ref{s:homology} for details), it then follows from a result of Matui \cite{Matui_torsion} that there exists a free, minimal action $\alpha$ of $\Z^2$ on the Cantor space such that $\llbracket \alpha \rrbracket$ does not have a dense locally finite subgroup. This stands in stark contrast to the case of minimal $\Z$-actions. It turns out that the existence of a dense locally finite subgroup of $\llbracket \alpha \rrbracket$ is captured by algebraic properties of $T(\alpha)$.

\begin{theorem*}[see Theorem~\ref{p:unperf_locally_finite_dense}]~

Assume that $\alpha$ is an action of a countable group $\Gamma$ on a compact, metrizable, $0$-dimensional space. Then $\llbracket \alpha \rrbracket$ admits a dense locally finite subgroup iff $T(\alpha)$ is unperforated and cancellative.
\end{theorem*}
When $\Gamma$ is amenable and $\alpha$ is minimal, it follows that $\llbracket \alpha \rrbracket$ has a dense locally finite subgroup iff $T(\alpha)$ is unperforated.

The fact that our arguments work also in the non-metrizable setting makes it easy to deduce the following facts from classical results.

\begin{theorem*}[See Theorem \ref{t:Tarski}]~

Let $\Gamma$ be a countable amenable group. Then the action $\Gamma \actson \beta \Gamma$ has dynamical comparison. 

This implies that for any two subsets $A,B$ of $\Gamma$, if $\mu(A) < \mu(B)$ for every finitely additive, $\Gamma$-invariant probability measure $\mu$ on $\Gamma$, then there exist $A_1,\ldots,A_n \subset \Gamma$ and $\gamma_1, \ldots , \gamma_n \in \Gamma$ such that $\bigsqcup_{i=1}^n A_i = A$, $\bigsqcup_{i=1}^n \gamma_i A_i \subseteq B$.
\end{theorem*}

Using a standard trick of topological dynamics (the existence of a $\Gamma$-equivariant retraction from $\beta \Gamma$ onto the \emph{universal minimal flow} $\mu \Gamma$ of $\Gamma$), we also obtain the following fact.

\begin{theorem*}[See Theorem \ref{t:minimal_flow_comparison}]~
Let $\Gamma$ be a countable group. Then the action $\Gamma \actson \mu \Gamma$ has dynamical comparison.
\end{theorem*}

We then use this to glean some insight into the generic properties of minimal actions of countable amenable groups on the Cantor space. For this to make sense, note first that the space of actions $A(\Gamma)$ on the Cantor space $X$ can be seen as a closed subspace of $\Homeo(X)^\Gamma$, hence a Polish space. The space of minimal actions $\Min(\Gamma) \subset A(\Gamma)$ is a $G_\delta$ subset, so it is also Polish with the induced topology; and the conjugation action $\Homeo(X) \actson \Min(\Gamma)$ is readily seen to be transitive. It follows that every Baire-measurable subset of $\Min(\Gamma)$ is either meager or comeager. This applies in particular to conjugacy classes, and one is led to wonder when there exists a comeager conjugacy class in $\Min(\Gamma)$. This is known to hold for $\Gamma=\Z$ by a result of Hochman (\cite{Hochman2008}; the generic element is the universal odometer) and seems to be an open problem for every other countable group $\Gamma$. As a consequence of our work, we establish the following fact.

\begin{theorem*}[see Proposition \ref{p:generic_properties}]~

Let $\Gamma$ be an infinite countable group. A generic element $\alpha$ of $\Min(\Gamma)$ is such that:
\begin{itemize}
    \item $\alpha$ is free.
    \item $T(\alpha)$ is unperforated (hence $\alpha$ has dynamical comparison).
    \item The algebraic order on $T(\alpha)$ is a partial ordering.
\end{itemize}
\end{theorem*}
This can be used to give an alternative argument for the fact, due to Conley, Kerr, Jackson, Marks, Seward and Tucker-Drob that a generic element of $\Min(\Gamma)$ is \emph{almost finite} \cite{Conley_etc2018}. 

The organization of the article is fairly straightforward: we begin by introducing the clopen type semigroup of an action, discuss how it is related to the $0$-homology group and see how certain properties of an action (such as dynamical comparison) are visible in its clopen type semigroup. Then we discuss what this implies for the Stone-\v{C}ech compactification and universal minimal flow of a given countable group $\Gamma$, before deriving some consequences for generic properties in the space of minimal actions. After that we briefly discuss two weakenings of dynamical comparison and collect a few open problems.

There are two appendices to the paper; the first one is devoted to the proof of a variation on an result of Krieger which is used in our characterization of the existence of a dense locally finite subgroup of $\llbracket \alpha \rrbracket$ by algebraic properties of $T(\alpha)$ (that proof is essentially the same as Krieger's original argument as exposed in \cite{mellerayInvariantMeasuresOrbit2021}). The second one is simply a glossary of monoid-theoretic terms used in the paper, so if the reader is uncertain what the precise definition of such a notion is, and unsure where it came up first, then she can simply refer to the second appendix.


As we already mentioned above, the results in section \ref{s:clopen_type_semigroup} concerning dynamical comparison were already known (and due to Ma, \cite{Ma2019} and \cite{Ma2021}) for metric spaces; we still give complete arguments since one of the aims of this paper is to gather some of the ideas used when studying refinement monoids and make them accessible to an audience not used to this topic (which includes the author before embarking on this project), as well as to point out some of the relevant literature. Our hope is that this article can serve as a starting point for a deeper study of these connections. 
  
\emph{Acknowledgements}. My interest in the clopen type semigroup in the non metrizable case stems from a suggestion of Andreas Thom. This work also benefited from conversations with Andy Zucker as well as Nicol\'as Matte Bon, who made me aware of some of the literature related to this topic. Mikael R\o rdam kindly provided a reference that helped me see how one can prove Proposition \ref{prop: almost unperforation condition iff states agree} and Benjamin Weiss explained to me an interesting example for $\Z$-actions that is briefly discussed in the paper. Andrea Vaccaro made helpful comments on a previous version of this paper. I would like to thank all of them.

Simon Robert should have a special mention here, for our many discussions regarding this paper; Simon eventually elected not to co-sign this article but those conversations were important in it coming to fruition. 

\section{Properties of the clopen type semigroup} \label{s:clopen_type_semigroup}

\subsection{Some vocabulary}

Throughout this section we fix a $0$-dimensional compact Hausdorff space $X$, a countable discrete group $\Gamma$, and an action $\alpha \colon \Gamma \curvearrowright X$. Since $\alpha$ is fixed for now, we suppress it from our notation and simply write $\gamma x$ for $\alpha(\gamma)(x)$.

We consider the space $Y = X \times \N$, endowed with the product topology, and say that $A \in \Clopen(Y)$ is \emph{bounded} if $A \cap (X \times \{n\}) = \emptyset$ for any large enough $n$. We denote $\tilde \Gamma = \Gamma \times \mathfrak S$, where $\mathfrak S$ is the permutation group of $\N$, and let $\Gamma$ act diagonally on $Y$.

\begin{defn}
Let $A,B$ be two bounded clopen subsets of $Y$.

We say that $A$ and $B$ are \emph{equidecomposable} if there exist $A_i \in \Clopen(Y)$, $\tilde \gamma_i \in \tilde \Gamma$ such that $A= \bigsqcup_{i=1}^n A_i$, $B= \bigsqcup_{i=1}^n \tilde \gamma_i A_i $.

We denote by $[A]$ the set of all bounded clopen subsets of $Y$ which are equidecomposable with $A$, and set $T(\alpha)= \{[A] \colon A \text{ bounded and clopen in } Y\}$.
\end{defn}

\begin{defn}
Given two bounded clopen $A,B \subset Y$ we let $[A] + [B] = [\tilde A \cup \tilde B]$, where $\tilde A$ and $\tilde B$ are any two bounded clopen subsets of $Y$ 
such that $[\tilde A]= [A]$, $[\tilde B]= B$, and $\tilde A \cap \tilde B= \emptyset$.
\end{defn}

It is straightforward to check that this definition indeed makes sense ($[\tilde A \sqcup \tilde B]$ does not depend on the choice of $\tilde A \in [A]$, $\tilde B \in [B]$ as long as they are disjoint) and that $(T(\alpha),+)$ is a commutative monoid with neutral element $0=[\emptyset]$. 

We endow $T(\alpha)$ with the algebraic pre-ordering, i.e.~set $a \le b$ whenever there exists $c \in T(\alpha)$ such that $a+c=b$. The structure $(T(\alpha),+,\le)$ is the \emph{clopen type semigroup} with which we will be working throughout this paper. This construction is analogous to Tarski's well-known approach to paradoxicality questions, see \cite{Wagon2016}; the clopen version discussed here first came up in \cite{Rordam_Sierakowski}.

\begin{defn}
A commutative monoid $T$ is 
\begin{itemize}
    \item \emph{conical} if for any $u,v \in T$ one has $u+v=0 \Rightarrow u=v=0$.
    \item \emph{simple} if for any $y$, any $x \ne 0$ there exists $n$ such that $y \le nx$ (every element is an \emph{order unit}).
    \item a \emph{refinement monoid} if whenever $\sum_{i=1}^n a_i = \sum_{j=1}^m b_j$, there exist $(c_{i,j})$ such that for all $i$ one has $\sum_{j=1}^m c_{i,j}=a_i$, and for all $j$ one has $\sum_{i=1}^n c_{i,j} = b_j$.
\end{itemize}
\end{defn}

Clearly, $T(\alpha)$ is a conical refinement monoid; and $T(\alpha)$ is simple iff $\alpha$ is \emph{minimal}, i.e.~if there is no nontrivial closed $\alpha$-invariant set (equivalently, every orbit of $\alpha$ is dense in $X$).

\subsection{Connection with $0$-homology and cancellativity}\label{s:homology}
We now take some time to discuss the strong connection between the clopen type semigroup and the $0$-homology group of the action. 

To that end, we use an alternative construction of $T(\alpha)$ mentioned by Kerr \cite{Kerr2020}: given $f \in C(X,\N)$ and $\gamma \in \Gamma$, let $\gamma \cdot f $ be the function mapping $x \in X$ to $f(\gamma^{-1} x)$ (in this paper $\N=\{n \in \Z \colon n \ge 0\}$). Then, for any $f,g \in C(X,\N)$ say that $f \sim g$ if there exist $h_i \in C(X,\N)$ and $\gamma_i \in \Gamma$ such that $\sum_{i=1}^n h_i = f$ and $\sum_{i=1}^n \gamma_i \cdot h_i = g$. This is an equivalence relation and we denote by $[f]$ the equivalence class of $f \in C(X,\N)$. The addition on $C(X,\N)$ induces a well-defined, associative, commutative operation on $C(X,\N)/\!\! \sim$; and the monoid we just described is isomorphic to $(T(\alpha),+)$ via the (quotient of) the map $f \in C(X,\N) \mapsto \bigcup_{i=1}^n \{x \colon f(x) > i\} \times \{i\}$. 

Given a group action $\alpha \actson X$, $X$ a compact, $0$-dimensional space, the associated $0$-homology group, called the group of \emph{coinvariants} in \cite{Matui_torsion} is the group 
$$H(\alpha)=C(X,\Z) / \langle f - f \circ \alpha(\gamma) \colon \gamma \in \Gamma \rangle$$
We denote by $(f)$ the equivalence class of $f$ for this quotient map. 
Since $\alpha$ is fixed we again suppress if from our notation below and simply write $\gamma \cdot f$ for the map $x \mapsto f(\alpha(\gamma)^{-1}x)$.

If $[f]=[g]$ then there exist $h_1,\ldots, h_n \in C(X,\N)$ and $\gamma_1,\ldots,\gamma_n \in \Gamma$ such that
\[f = g+ \sum_{i=1}^n (h_i - \gamma_i \cdot h_i) \]
so that $[f]=[g] \Rightarrow (f)=(g)$. 
So we have a natural surjection $\pi \colon T(\alpha) \to H(\alpha)^+$, where $H(\alpha)^+$ is the positive cone of $H(\alpha)$, i.e.~the image of $C(X,\N)$ under the quotient map; by definition $\pi$ is a semigroup homomorphism from $(T(\alpha),+)$ to $(H(\alpha),+)$ and we are then led to wonder when $\pi$ is injective.

\begin{defn}
A commutative semigroup $S$ is \emph{cancellative} if for any $a,b,c \in S$ we have $(a+b = a + c) \Rightarrow b= c$.
\end{defn}

Since $H(\alpha)$ is a group, it is of course cancellative; so if $\pi$ is injective then $T(\alpha)$ must itself be cancellative.

\begin{prop}
The natural semigroup homomorphism $\pi \colon T(\alpha) \to H(\alpha)$ is injective iff $T(\alpha)$ is cancellative.
\end{prop}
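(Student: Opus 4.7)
The plan is to establish the two directions separately, the forward one being essentially automatic and the reverse one requiring a positive/negative decomposition in $C(X,\Z)$ so as to reduce equality in $H(\alpha)$ to an equation one can cancel in $T(\alpha)$.

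For the $(\Rightarrow)$ direction I would simply note, as the text itself already points out, that $H(\alpha)$ is a group, hence cancellative; so any submonoid of $H(\alpha)$ is cancellative. If $\pi$ is injective then $T(\alpha)$ is isomorphic as a monoid to $\pi(T(\alpha))\subseteq H(\alpha)^+$, and cancellativity transfers.

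For the $(\Leftarrow)$ direction, assume $T(\alpha)$ is cancellative and take $f,g\in C(X,\N)$ with $\pi([f])=\pi([g])$, i.e.\ $(f)=(g)$ in $H(\alpha)$. By definition of $H(\alpha)$ there exist $h_1,\dots,h_n\in C(X,\Z)$ and $\gamma_1,\dots,\gamma_n\in\Gamma$ such that
\[ f = g + \sum_{i=1}^n \bigl(h_i - \gamma_i\cdot h_i\bigr). \]
Since $X$ is compact, each $h_i$ takes only finitely many values, and we may split $h_i=h_i^+-h_i^-$ with $h_i^+,h_i^-\in C(X,\N)$. Moving the negative parts to the other side yields an honest equation in $C(X,\N)$:
\[ f + \sum_{i=1}^n \bigl(h_i^- + \gamma_i\cdot h_i^+\bigr) \;=\; g + \sum_{i=1}^n \bigl(h_i^+ + \gamma_i\cdot h_i^-\bigr). \]

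Passing to $T(\alpha)$ via the isomorphism $C(X,\N)/\!\!\sim\;\cong T(\alpha)$ recalled before the statement, and using that $[\gamma\cdot h]=[h]$ by the very definition of the equivalence relation $\sim$, both sides become $[f]+\sum_i([h_i^-]+[h_i^+])$ and $[g]+\sum_i([h_i^+]+[h_i^-])$ respectively. Cancellativity of $T(\alpha)$ now gives $[f]=[g]$, as desired.

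The argument is mostly bookkeeping; the only step that requires any thought is the passage from an equality in $H(\alpha)$ (which involves $\Z$-valued functions and their $\Gamma$-translates with possibly negative coefficients) to a $C(X,\N)$-level equation to which the cancellation hypothesis applies. The positive/negative decomposition $h=h^+-h^-$ in $C(X,\N)$ handles this cleanly and is the only place where one uses anything beyond the formal definitions of $T(\alpha)$ and $H(\alpha)$.
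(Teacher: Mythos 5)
Your proof is correct and follows essentially the same route as the paper's: the forward direction is identical, and in the reverse direction the only difference is that the paper makes each $h_i$ nonnegative by adding a constant $n_i\mathbf{1}$ (using $\gamma_i\cdot\mathbf{1}=\mathbf{1}$, so that $h_i-\gamma_i\cdot h_i$ is unchanged) rather than by splitting $h_i=h_i^+-h_i^-$. Either device yields an equation in $C(X,\N)$ whose correction terms have equal type in $T(\alpha)$, after which cancellativity finishes the argument exactly as you wrote.
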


\begin{proof}
One implication is obvious and has already been pointed out. 

To see the converse, assume that $T(\alpha)$ is cancellative and $f,g \in C(X,\N)$ are such that $(f)=(g)$. Unravelling the definition, we obtain some $h_i \in C(X,\Z)$ and $\gamma_i \in \Gamma$ such that $f=g + \sum_{i=1}^n (h_i - \gamma_i \cdot h_i)$.

Since $h_i$ is continuous it is bounded, so for each $i$ there exists some integer $n_i$ such that $\tilde h_i  = h_i+ n_i \mathbf{1} \in C(X,\N)$ (where $\mathbf{1}$ is the constant function equal to $1$); and since $\gamma_i \cdot \mathbf 1= \mathbf 1$ we have that $\tilde h_i - \gamma_i \cdot \tilde h_i = h_i - \gamma_i \cdot h_i$.

We thus have $f + \sum_{i=1}^n \gamma_i \cdot \tilde h_i = g + \sum_{i=1}^n \tilde h_i$; and by definition $[\gamma_i \cdot \tilde h_i] = [ \tilde h_i]$ so we obtain $[f] + [\sum_{i=1}^n \tilde h_i] = [g] + [\sum_{i=1}^n [\tilde h_i]$. Since we assumed that $T(\alpha)$ is cancellative we conclude that $[f]=[g]$, in other words $\pi$ is injective.
\end{proof}

Using a similar argument one can show that $H(\alpha)$ is always isomorphic to the Grothendieck group of $T(\alpha)$.
It then becomes very interesting to understand when $T(\alpha)$ is cancellative, especially so because as soon as $T(\alpha)$ is cancellative it is the positive cone of a group satisfying the Riesz decomposition and interpolation properties (which in this situation are equivalent to refinement, see \cite{Goodearl2010}*{Prop 2.1}), and groups satisfying these conditions are well-studied.

Unfortunately I know little about this question, though below (Proposition \ref{prop: link between weak comparability, almost unperforation and cancellation}) a connection with dynamical comparison will appear.

In section \ref{s:Stone_Cech} we will see that the clopen type semigroup of the universal minimal flow of any countable amenable group is cancellative. In the other direction, let us discuss a simple example of a uniquely ergodic, free Cantor $\Z$-action such that $T(\alpha)$ is not cancellative. Note that this action has the dynamical comparison property since the acting group is $\Z$.

Let $Y=\Z \cup \{\infty\}$ be the one-point compactification of $\Z$, with $\Z$ acting on itself by translation and fixing $\infty$. Form the product $X= Y \times 2^\omega$, with $\Z$ acting on the second coordinate via (say) the dyadic odometer. Then $X$ is a Cantor space, on which $\Z$ acts freely; further, there is a unique $\Z$-invariant Borel probability measure, supported on $\{\infty\} \times 2 ^\omega$. Let $A = (Y \setminus \{0\}) \times 2^\omega$; the map 
\[ \tau \colon (n,x) \mapsto \begin{cases} (n+1,x) & \text{ if } n \ge 0 \\ (n,x) & \text{ if } n < 0 \end{cases} \]
witnesses that $[Y] \le [A] $, so $T(\alpha)$ is not cancellative (in the language introduced in the next section, it is not even stably finite). 




There is another reason why cancellativity would be useful: we would like to rule out the existence of a minimal amenable Cantor action $\alpha$ and two clopen $A,B$ such that $[A]=[B]$ in $T(\alpha)$ yet there does not exist any $\gamma \in \llbracket \alpha \rrbracket$ such that $\gamma A = B$. If $[A \setminus B]= [B \setminus A]$ (which follows from cancellativity and the equality $[A]=[B]$) then there is an involution  $\sigma \in \llbracket \alpha \rrbracket$ such that $\sigma(A \setminus B) = B \setminus A$ and $\sigma$ fixes all the other elements, so that $\sigma A= B$; but without some form of cancellativity it is not clear whether $[A]=[B]$ implies $[A \setminus B]= [B \setminus A]$ in $T(\alpha)$. 


\subsection{Existence of invariant measures}
\begin{defn}
A \emph{state} on $(T(\alpha),+)$ is a morphism from $(T(\alpha),+)$ to $[0,+ \infty]$, where as usual $x + \infty =+ \infty$ for every $x \in [0,+ \infty]$. A \emph{normalized} state is a state $\mu$ such that $\mu([X])=1$ (note that then $\mu$ only takes finite values).

We denote by $\mcM_\infty(\alpha)$ the set of states on $T(\alpha)$, and by $\mcM(\alpha)$ the set of nor\-ma\-lized states. 

\end{defn}

Note that normalized states correspond to $\Gamma$-invariant finitely additive probability measures (f.a.p.m) on $\Clopen(X)$: given a state $\mu$ one obtains a $\Gamma$-invariant f.a.p.m $\nu$ by setting $\nu(A) = \mu([A])$; conversely any $\Gamma$-invariant f.a.p.m $\nu$ induces a state by setting $\nu([A])= \mu(A)$ and extending to $T(\alpha)$ (this is straightforward to prove, see e.g. \cite{Wagon2016} for the classical case, and \cite{Rordam_Sierakowski} for the clopen case we consider here). In turn, since $X$ is $0$-dimensional, any $\Gamma$-invariant f.a.p.m on $\Clopen(X)$ uniquely determines a positive linear functional on $C(X)$ mapping $1$ to $1$, i.e.~a $\Gamma$-invariant Radon probability measure on $X$ (when $X$ is metrizable every Borel probability measure is Radon so we are simply dealing with Borel probability measures in that case); and a $\Gamma$-invariant Radon probability measure restricts to a $\Gamma$-invariant f.a.p.m. We will use the same notation for all these objects, making the obvious identifications whenever convenient. One should however beware that elements in $\mcM_\infty(\alpha)$ do not necessarily come from a Radon measure (a trivial example being the state mapping every non-zero element to $+\infty$; in the non-metrizable case there are other complications, such as the fact that $\Clopen(X)$ does not in general generate the Borel $\sigma$-algebra).

\begin{lemma}\label{l:order_unit_carac}
Assume that $\Gamma$ is amenable. Then $b \in T(\alpha)$ is an order unit iff $\mu(b)>0$ for every $\mu \in \mcM(\alpha)$.
\end{lemma}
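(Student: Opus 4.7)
The plan is to prove the two directions separately; amenability will enter only in the nontrivial direction, and only to provide an invariant Radon probability measure on a closed invariant subset of $X$.

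The forward direction is immediate. If $b$ is an order unit then $[X]\le kb$ for some $k\in\N$, so any $\mu\in\mcM(\alpha)$ satisfies $1=\mu([X])\le k\mu(b)$, hence $\mu(b)\ge 1/k>0$. No hypothesis on $\Gamma$ is used here.

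For the converse, suppose $\mu(b)>0$ for every $\mu\in\mcM(\alpha)$. I would first reduce to the case where $b$ is represented by a clopen subset of $X$. Write $b=[A]$ with $A=\bigsqcup_{i=0}^{n-1}A_i\times\{i\}$ bounded clopen in $Y$, and set $B=\bigcup_{i=0}^{n-1}A_i\subset X$. Permuting fibers via elements of $\mathfrak S$ gives $b=\sum_i[A_i\times\{0\}]\le n[B\times\{0\}]$, while writing $B=\bigsqcup_iB_i$ with $B_i\subset A_i$ (inclusion--exclusion) gives $[B\times\{0\}]=\sum_i[B_i]\le\sum_i[A_i]=b$. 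Thus every $\mu\in\mcM(\alpha)$ satisfies $\mu(B)\ge\mu(b)/n>0$, and it suffices to show that this concrete clopen $B$ satisfies $[X]\le k[B]$ for some $k$.

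The main step is to prove that $U:=\bigcup_{\gamma\in\Gamma}\gamma^{-1}B$ equals $X$. If not, $X\setminus U$ would be a closed, nonempty, $\Gamma$-invariant subset, and amenability of $\Gamma$ would produce a $\Gamma$-invariant Radon probability measure supported on $X\setminus U$; this is an element of $\mcM(\alpha)$ vanishing on $B$, contradicting $\mu(B)>0$. This application of amenability (equivalently, existence of an invariant measure on an invariant closed subspace of a compact space) is the only substantive ingredient, and is where the main obstacle lies. Granted $U=X$, compactness together with the clopen-ness of the $\gamma^{-1}B$ yields a finite $F\subset\Gamma$ with $X=\bigcup_{\gamma\in F}\gamma^{-1}B$; refining to a clopen partition $X=\bigsqcup_{\gamma\in F}A_\gamma$ with $A_\gamma\subset\gamma^{-1}B$ gives $[X]=\sum_\gamma[A_\gamma]=\sum_\gamma[\gamma A_\gamma]\le|F|\,[B]\le|F|\,b$. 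Any $a\in T(\alpha)$ is bounded above by a multiple of $[X]$ (a bounded clopen of $Y$ sits inside some $X\times\{0,\ldots,m-1\}$, of class $m[X]$), so $a\le m|F|\,b$ and $b$ is an order unit.
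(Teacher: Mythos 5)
Your proof is correct and follows essentially the same route as the paper's: the forward direction is the same one-line computation, and the converse rests on the same key point, namely that if the translates of the clopen set $B$ underlying $b$ did not cover $X$, amenability would yield an invariant Radon probability measure supported on the closed invariant complement, forcing $\mu(b)=0$. You merely phrase it as a direct argument (with the reduction $[B]\le b\le n[B]$ and the compactness step spelled out) where the paper argues by contraposition.
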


\begin{proof}
Assume first that $b$ is an order unit. Then there exists $n$ such that $[X] \le nb$, whence $n \mu(b) \ge \mu(X)=1$ for every $\mu \in \mcM(\alpha)$.

Conversely, assume that $b$ is not an order unit; let $B=\bigsqcup B_i \times \{i\}$ be a representative of $b$ and let $A= \bigcup B_i$. Then $A$ is clopen and $\Gamma A \subsetneq X$ since otherwise a finite union of translates of the $B_i$ would cover $X$, which would imply that $[X] \le n b$ for some $n$. Thus $X \setminus \Gamma A$ is a nonempty closed subset of $X$, and since $\Gamma$ is amenable there exists a $\Gamma$-invariant Radon probability measure $\mu$ on $X$ which is supported on $X \setminus A$, hence $ \mu(B_i)=0$ for all $i$. Viewing $\mu$ as an element of $\mcM(\alpha)$, we have $\mu(b)=0$.
\end{proof}

\begin{defn}
We say that an element $b \in T(\alpha)$ is \emph{directly finite} if for all $a$ one has $a+b=b \Rightarrow a=0$. The semigroup $T(\alpha)$ is said to be \emph{stably finite} if every element of $T(\alpha)$ is directly finite. 
\end{defn}

Clearly cancellativity implies stable finiteness; we will shortly see that, in the minimal case, stable finiteness is equivalent to the existence of a normalized state on $T(\alpha)$. 

The next result follows immediately from Tarski's famous theorem \cite{Tarski1938}. See \cite{Wagon2016} for a detailed discussion of this and related results.

\begin{theorem}[Tarski, see \cite{Wagon2016}*{Theorem~11.1}]
Let $b \in T(\alpha)$. There exists $\mu \in \mcM_\infty(\alpha)$ such that $\mu(b)=1$ iff for any $n$ one does not have $(n+1)b \le nb$.
\end{theorem}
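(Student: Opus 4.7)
The plan is to reduce the claim to Tarski's classical theorem on the existence of invariant finitely additive measures, which the statement itself points to.

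For the easy direction, if $\mu \in \mcM_\infty(\alpha)$ satisfies $\mu(b) = 1$ then, being a monoid morphism into $[0,+\infty]$, $\mu$ is order-preserving on $T(\alpha)$ and $\mu(nb) = n$ is finite for every $n$. Hence $(n+1)b \le nb$ would yield $n+1 \le n$, a contradiction.

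For the hard direction, I would assume $(n+1)b \not\le nb$ for every $n \ge 1$ and pick a bounded clopen representative $B \subseteq Y = X \times \N$ of $b$. The first step is to observe that, as in the identification discussed just before Lemma~\ref{l:order_unit_carac} but carried out for the amplification $Y$ in place of $X$, elements of $\mcM_\infty(\alpha)$ correspond bijectively to $\tilde \Gamma$-invariant finitely additive $[0,+\infty]$-valued measures $\nu$ on the Boolean algebra of bounded clopen subsets of $Y$, via $\mu([A]) = \nu(A)$. Producing the desired state thus reduces to producing such a $\nu$ with $\nu(B) = 1$. Our hypothesis translates to the statement that $B$ is not generalized-paradoxical under $\tilde \Gamma$-equidecomposition, in the sense that no $n$ admits a decomposition witnessing $(n+1) \cdot B \le n \cdot B$. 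Tarski's theorem, applied to the action of $\tilde \Gamma$ on this Boolean algebra (equivalently, on its Stone space), then yields the desired $\nu$.

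The main obstacle is to reconcile the ``for every $n$'' quantifier in the hypothesis with the precise form of Tarski's theorem invoked. Wagon's textbook statement (\cite{Wagon2016}*{Theorem~11.1}) is most often phrased with the apparently weaker non-paradoxicality condition $2[B] \not\le [B]$, and in an abstract conical refinement monoid these two conditions can genuinely differ. The cleanest route is to invoke a refinement-monoid version of Tarski's theorem (see \cite{Wehrung2017}), which applies directly to conical refinement monoids such as $T(\alpha)$ and yields precisely the quantifier needed; alternatively one can repeat the Hahn--Banach / ultrafilter proof of Tarski's theorem while carrying the generalized Tarski--K\"onig paradoxicality condition $m[B] \le n[B]$ with $m > n$ throughout, since that proof is insensitive to this strengthening.
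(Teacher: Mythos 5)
Your proposal is correct and follows the same route as the paper, which gives no proof at all and simply observes that the statement is an immediate instance of Tarski's theorem. Note, however, that the cited result (\cite{Wagon2016}*{Theorem~11.1}) is precisely the \emph{semigroup} form of Tarski's theorem --- for a commutative monoid $S$ and $\epsilon \in S$, a morphism $\mu \colon S \to [0,+\infty]$ with $\mu(\epsilon)=1$ exists iff $(n+1)\epsilon \not\le n\epsilon$ for every $n$ --- so it applies verbatim to $S = T(\alpha)$ and $\epsilon = b$ (a state being by definition such a morphism), and both your translation to finitely additive measures on the bounded clopen subsets of $Y$ and your worry about reconciling the ``for every $n$'' quantifier with the non-paradoxicality condition $2[B] \not\le [B]$ are unnecessary detours (the latter concern would indeed be genuine if one only had the set-theoretic formulation of Tarski's theorem, since $T(\alpha)$ need not be unperforated).
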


\begin{prop}
Assume that $\alpha$ is minimal. Then $\mcM(\alpha)$ is nonempty iff $T(\alpha)$ is stably finite.
\end{prop}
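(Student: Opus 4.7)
The plan is to treat the two implications separately, using Tarski's theorem (cited just above the proposition) for the nontrivial existence direction.

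For the forward direction, I would fix $\mu \in \mcM(\alpha)$ and suppose $a+b=b$ in $T(\alpha)$. Applying $\mu$ gives $\mu(a)+\mu(b)=\mu(b)$, and since normalized states take only finite values by definition, this forces $\mu(a)=0$. Now minimality of $\alpha$ is equivalent to simplicity of $T(\alpha)$ (as observed earlier in the excerpt), so every nonzero element of $T(\alpha)$ is an order unit. If $a\neq 0$ we would therefore have $[X]\le ka$ for some $k\in\N$; applying $\mu$ to this inequality yields $1=\mu([X])\le k\mu(a)=0$, a contradiction. Hence $a=0$, and $T(\alpha)$ is stably finite.

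For the converse, assume $T(\alpha)$ is stably finite. By Tarski's theorem applied to $b=[X]$, in order to produce a state $\mu\in\mcM_\infty(\alpha)$ with $\mu([X])=1$ it suffices to check that $(n+1)[X]\not\le n[X]$ for every $n\in\N$. Suppose towards a contradiction that $(n+1)[X]\le n[X]$ for some $n$; then there is $c\in T(\alpha)$ with $n[X]=(n+1)[X]+c=n[X]+([X]+c)$. Applying direct finiteness of $n[X]$ (which holds because $T(\alpha)$ is stably finite) gives $[X]+c=0$, and then conicality of $T(\alpha)$ forces $[X]=0$, which is impossible since $X$ is nonempty. Tarski thus produces the desired state; to see that it is actually normalized (not just in $\mcM_\infty(\alpha)$), I would observe that any $a\in T(\alpha)$ is represented by a bounded clopen subset of $X\times\N$, which is contained in $X\times\{0,\ldots,N-1\}$ for some $N$, so $a\le N[X]$ and hence $\mu(a)\le N<\infty$.

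The main obstacle, such as it is, is conceptual rather than technical: one has to notice that Tarski's criterion need only be tested at the single element $b=[X]$, and that direct finiteness of $n[X]$ combined with conicality is precisely what rules out the paradoxical inequality $(n+1)[X]\le n[X]$. Minimality plays a role only in the forward direction, where simplicity is what allows us to upgrade $\mu(a)=0$ to $a=0$; without it one would merely conclude that $a$ lies in the kernel of every normalized state.
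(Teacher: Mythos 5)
Your proof is correct and follows essentially the same route as the paper: the forward direction uses minimality (simplicity of $T(\alpha)$) to upgrade $\mu(a)=0$ to $a=0$, and the converse is the contrapositive of Tarski's theorem applied to $b=[X]$, with stable finiteness (plus conicality) ruling out $(n+1)[X]\le n[X]$. The extra details you supply — conicality forcing $[X]+c\neq 0$, and the finiteness of a state with $\mu([X])=1$ — are left implicit in the paper (the latter is already noted in the definition of a normalized state) but are exactly the right points to check.
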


\begin{proof}
If there exists $\mu \in \mcM(\alpha)$, then for every nonzero element $a \in T(\alpha)$ we have $\mu(a)>0$ (because for some $n$ one has $[X] \le na$ since $a$ is an order unit). From $a+b=b$ we obtain $\mu(a)+\mu(b)=\mu(b)$, whence $\mu(a)=0$ since $\mu(b)$ is finite; and we noted that this is only possible if $a=0$.

Conversely, Tarski's theorem asserts that the nonexistence of a state on $T(\alpha)$ such that $\mu(X)=1$ implies that there exists $n$ such that $(n+1)[X] \le n[X]$, i.e. there exists $u$ such that $n[X] +[X]+u = n[X]$, whence $n[X]$ is not directly finite.
\end{proof}

In section \ref{s:homology} we saw an example of a uniquely ergodic $\Z$-action $\alpha$ on the Cantor space $X$ such that $(T(\alpha),+)$ is not stably finite.
Note that if $T(\alpha)$ is stably finite (in particular, if $\alpha$ is a minimal action of an amenable group) then $\le$ is a partial order on $T(\alpha)$; in general $\le$ may only be a pre-order, i.e. it could happen that $u \le v$ and $v \le u$ but $u \ne v$. For instance, it follows from the homology computation in the last section of \cite{Matui_TopfullGroups_subshifts} that for the natural action of a nonabelian free group on its boundary one has $a \le b$ for all nonzero $a,b \in T(\alpha)$, yet there exist nonempty clopen sets which are not equidecomposable. 

We now note the following fact, which (in our context) is the same as Proposition 2.1 of \cite{Ortega_Perera_Rordam2012}. This proposition is also used by Ara--Bönicke--Bosa--Li \cite{Ara_Bonicke_Bosa_Li}, Kerr \cite{Kerr2020} and Ma \cite{Ma2021}, among others. The proof given in \cite{Ortega_Perera_Rordam2012} appeals to results of Goodearl-Handelman \cite{Goodearl_Handelman1976} which are stated for partially ordered abelian groups, and it was not immediately clear to the author how to fill out all the details. M. R\o rdam was kind enough to point to Proposition 2.8 of \cite{Blackadar_Rordam1992} for an alternative argument to plug in to the proof; for the reader's convenience, here is a detailed argument.

\begin{prop}\label{prop: almost unperforation condition iff states agree}
For any $a,b \in T(\alpha)$, the following are equivalent:
\begin{enumerate}
\item There exists $n \in \N$ such that $(n+1)a \le nb$.
\item There exists $n$ such that $a \le n b$, and for every $\mu \in \mcM_\infty(\alpha)$ such that $\mu(b)=1$ one has $\mu(a) < \mu(b)$.
\end{enumerate}
\end{prop}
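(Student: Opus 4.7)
The implication $(1) \Rightarrow (2)$ is a direct verification: from $(n+1)a \le nb$ one reads off $a \le nb$ immediately, and applying any state $\mu \in \mcM_\infty(\alpha)$ with $\mu(b)=1$ yields $(n+1)\mu(a) \le n\mu(b) = n$, so $\mu(a) \le n/(n+1) < 1 = \mu(b)$.

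For $(2) \Rightarrow (1)$, the first step is a compactness argument on the state slice $S_b = \{\mu \in \mcM_\infty(\alpha) : \mu(b) = 1\}$. The set $\mcM_\infty(\alpha)$ is closed in the product $[0,+\infty]^{T(\alpha)}$ (the monoid-morphism conditions are all closed), and the additional condition $\mu(b) = 1$ is closed, so $S_b$ is compact. Fixing $N$ with $a \le Nb$, the evaluation $\mu \mapsto \mu(a)$ is continuous and bounded above by $N$ on $S_b$, and strictly less than $1$ by the hypothesis in (2); compactness then yields $c \in [0,1)$ with $\mu(a) \le c$ for every $\mu \in S_b$, and I choose $n$ so that $(n+1)c < n$. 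Thus $(n+1)\mu(a) < n\mu(b)$ uniformly on $S_b$. The edge case $S_b = \emptyset$ I would dispose of separately: Tarski's theorem then forces $(m+1)b \le mb$ for some $m$, which combined with $a \le Nb$ produces, after a short calculation, an inequality of the form $(n+1)a \le nb$.

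The remaining and decisive step is to promote the pointwise inequality on $S_b$ to the monoid inequality $(n+1)a \le nb$ in $T(\alpha)$. This is the main obstacle, and it requires a Tarski-type \emph{two-element} separation theorem: if $x \not\le y$ in a conical refinement monoid and $x \le My$ for some integer $M$, then there exists a state $\mu$ on the monoid with $0 < \mu(y) < \infty$ and $\mu(x) > \mu(y)$. Applied to $x = (n+1)a$ and $y = nb$ (the boundedness $x \le (n+1)N y$ following from $a \le Nb$), and then rescaled so that $\mu(b) = 1$, this delivers an element of $S_b$ with $\mu(a) > n/(n+1) > c$, contradicting the choice of $c$. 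I would invoke this separation statement as Proposition~2.8 of \cite{Blackadar_Rordam1992} (the reference the excerpt credits to a suggestion of R\o rdam); its proof proceeds by a Hahn--Banach / Goodearl--Handelman style argument on the Grothendieck group, using refinement to translate the formal positivity of $y - x$ into an honest state on $T(\alpha)$ with the desired values.
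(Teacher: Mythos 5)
Your implication $(1)\Rightarrow(2)$ is fine, as are the compactness argument on $S_b$ and the treatment of the edge case $S_b=\emptyset$ via Tarski's theorem. The gap is in the ``decisive step'': the two-element separation statement you invoke --- \emph{if $x \not\le y$ and $x \le My$, then some state satisfies $0 < \mu(y) < \infty$ and $\mu(x) > \mu(y)$} --- is false, and it is not what Blackadar--R\o rdam or Goodearl--Handelman provide. Indeed, suppose $x \not\le y$ but $(m+1)x \le my$ for some $m$, i.e.\ almost unperforation fails at the pair $(x,y)$. Then every state with $0<\mu(y)<\infty$ satisfies $(m+1)\mu(x)\le m\mu(y)$, hence $\mu(x)<\mu(y)$, so no state can do what your lemma asks. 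Ruling out such pairs is exactly the content of almost unperforation, which is a nontrivial (and, for minimal actions, open) property of $T(\alpha)$; in fact, applying your lemma directly with $x=a$ and $y=b$ would upgrade condition (2) to the conclusion $a\le b$, i.e.\ it would show that every clopen type semigroup is almost unperforated, which is far stronger than the proposition and would trivialize Lemma~\ref{l:carac_comparison_unperforation} and Proposition~\ref{p:minimal_dynamical_unperforated}. Your proof sketch for the lemma also breaks at its first step: since cancellation is not assumed, $x\not\le y$ in the monoid does not imply that $\varphi(y)-\varphi(x)$ fails to be positive in the Grothendieck group (one only has $\varphi(x)\preceq\varphi(y)$ iff $x+w\le y+w$ for some $w$); and even where separation applies, the Goodearl--Handelman theorem detects failure to be an \emph{order unit} and yields a state with $f(y-x)\le 0$, i.e.\ the non-strict inequality $\mu(x)\ge\mu(y)$, not a strict one.

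The correct dichotomy has hypothesis ``$x\le My$ and $(m+1)x\not\le my$ for \emph{every} $m$'' and conclusion ``there is a state with $\mu(y)=1$ and $\mu(x)\ge 1$''. With $x=a$ and $y=b$ this \emph{is} the proposition, so quoting it as a black box would be circular; establishing it is precisely what the paper's proof does. It passes to the subsemigroup $U_b$ of elements dominated by multiples of $b$, antisymmetrizes to $V_b$, forms the Grothendieck group $G(V_b)$ with the induced positive cone, matches up normalized states, and applies Goodearl's Theorem~4.12 to conclude that $\varphi(b)-\varphi(a)$ is an order unit. This yields only $(m+1)a+u\le mb+u$ for some $u\in U_b$, and a final arithmetic step --- absent from your sketch --- adds this inequality to itself $r$ times and absorbs $u\le pb$ (taking $r>p$) to reach $(n+1)a\le nb$. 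Once the dichotomy is stated in this form, your compactness argument and the careful choice of $n$ become unnecessary.
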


\begin{proof}
One implication is immediate. For the converse, fix $a,b \in T(\alpha)$ such that $a \le nb$ for some $n$ and $\mu(a)< \mu(b)$ for every $\mu \in M_{\infty}(\alpha)$ satisfying $\mu(b)=1$. 

We let $U_b$ denote the subsemigroup of $T(\alpha)$ made up of all $x$ such that $x \le nb$ for some $n$. Every state on $T(\alpha)$ restricts to a state on $U_b$, and every state on $U_b$ extends to a state on $T(\alpha)$ by assigning the value $\infty$ for every element not in $U_b$. 

Since $T(\alpha)$ may fail to be stably finite, we then form the maximal quasi-ordered quotient $V_b$ of $U_b$, declaring that $u \sim v$ iff $u \le v \le u$. To each state on $U_b$ corresponds a unique state on $V_b$, and states on $V_b$ extends to states on $U_b$.

Next, consider the Grothendieck group $G(V_b)$ of $V_b$, i.e the group obtained from $V_b$ by adding a formal inverse $-u$ for every $u \in V_b$; the natural map $\varphi \colon V_b \to G(V_b)$ may not be injective, since one has $\varphi(u)=\varphi(v)$ iff there exists $w \in V_b$ such that $u+w=v+w$ and we do not assume $T(\alpha)$ to be cancellative.

The group $G(V_b)$ becomes a partially ordered abelian group when declaring its positive cone to be equal to $\{u - v \colon u,v \in V_b\, , \ v \le u\}$. To avoid confusion we denote by $\preceq$ the corresponding ordering, and note that for any $u,v \in V_b$ we have $\varphi(u) \preceq \varphi(v)$ if, and only if, there exists $w$ such that $u + w \le v + w$.

Every normalized state on $G(V_b)$ (i.e. such that $\mu(b)=1$) restricts to a normalized state on $V_b$; and every normalized state on $V_b$ uniquely extends to a normalized state on $G(V_b)$. So our assumption on $a,b$ amounts to the statement that $\mu(\varphi(a)) < \mu(\varphi(b))$ for every normalized state on $G(V_b)$. Applying \cite{Goodearl2010}*{Theorem 4.12} it follows that $\varphi(b)-\varphi(a)$ is an order-unit in $G(V_b)$; in particular, there exists some integer $m$ such that $\varphi(a) \preceq m(\varphi(b)-\varphi(a))$.

Thus, there exists $u \in U_b$ and an integer $m$ such that $(m+1)a + u \le m b +u$. Using commutativity and associativity of $+$, we obtain $r(m+1)a +u \le rmb +u$ for any integer $r$; since $u \in U_b$ we have some integer $p$ such that $u \le pb$, leading to the inequality $r(m+1)a \le (rm+p)b$. Choose an integer $r >p$ and set $n=rm+p$. Then $(n+1)a \le r(m+1)a \le (rm+p)b=nb$.
\end{proof}
 
\subsection{Almost unperforation and dynamical comparison} 
 
 \begin{defn}
 Say that $T(\alpha)$ is \emph{almost unperforated} if for any $a,b \in T(\alpha)$ one has 
$$ \left( (n+1)a \le n b \right) \Rightarrow (a \le b)$$ 
 \end{defn}

\begin{lemma}\label{l:carac_comparison_unperforation}
$T(\alpha)$ is almost unperforated iff for any $a,b \in T(\alpha)$ one has
$$\left(\exists k \ a \le k b \text{ and } (\forall \mu \in \mcM_\infty(\alpha) \  \mu(b)=1 \Rightarrow \mu(a) < 1 )\right) \Rightarrow  \left( a \le b \right) $$
\end{lemma}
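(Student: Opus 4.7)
The plan is to derive this lemma almost entirely from Proposition \ref{prop: almost unperforation condition iff states agree}, which is already the hard technical ingredient; the lemma is essentially a cosmetic reformulation of that proposition combined with the definition of almost unperforation.

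For the forward direction (almost unperforation implies the stated condition), I would assume $T(\alpha)$ is almost unperforated and suppose that $a,b \in T(\alpha)$ satisfy $a \le kb$ for some $k$, and $\mu(a) < 1$ for every $\mu \in \mcM_\infty(\alpha)$ with $\mu(b)=1$. Rewriting the latter as $\mu(a) < \mu(b)$, this is exactly condition (2) of Proposition \ref{prop: almost unperforation condition iff states agree}, so there exists $n$ such that $(n+1)a \le nb$. Almost unperforation then yields $a \le b$ directly.

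For the converse, I would assume the displayed implication and deduce almost unperforation. Suppose $(n+1)a \le nb$. The case $n=0$ gives $a\le 0$, hence $a=0$ by conicity, so $a\le b$ trivially. For $n\ge 1$, I would first note the obvious inequality $a \le (n+1)a \le nb$, supplying the hypothesis $a\le kb$ (with $k=n$). For the state condition, fix $\mu \in \mcM_\infty(\alpha)$ with $\mu(b)=1$. Applying $\mu$ to $(n+1)a \le nb$ gives $(n+1)\mu(a) \le n$; in particular $\mu(a)$ must be finite (otherwise $+\infty \le n$), and $\mu(a) \le n/(n+1) < 1$. Thus both halves of the hypothesis hold, and the assumed implication yields $a\le b$.

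I do not expect any genuine obstacle here: the only mild subtleties are remembering to rule out $\mu(a)=+\infty$ and to treat the degenerate case $n=0$ separately, both of which are one-line observations. The real content of the equivalence sits in Proposition \ref{prop: almost unperforation condition iff states agree}, which has already been established; the lemma is simply the clean packaging of that proposition into a form that will be easier to quote in subsequent characterizations of dynamical comparison.
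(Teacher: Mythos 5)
Your proof is correct and follows the paper's argument exactly: the forward direction is an immediate application of Proposition \ref{prop: almost unperforation condition iff states agree} followed by almost unperforation, and the converse (which the paper dismisses as ``immediate'') is just the verification you give, that $(n+1)a \le nb$ supplies both hypotheses of the displayed implication. Your extra care with the $n=0$ case and with ruling out $\mu(a)=+\infty$ is harmless and makes the ``immediate'' direction fully explicit.
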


\begin{proof}
Assume $T(\alpha)$ is almost unperforated and $a,b \in T(\alpha)$ are such that $a \le kb$ for some $k$, and $\mu(a) < 1$ for every $\mu \in \mcM_\infty(\alpha)$ satisfying $\mu(b)=1$. Then by Proposition \ref{prop: almost unperforation condition iff states agree} there exists $n$ such that $(n+1)a \le n b$, whence $a \le b$ by almost unperforation.

The converse is immediate.
\end{proof}

If $\alpha$ is minimal the first condition on $a,b$ in Lemma \ref{l:carac_comparison_unperforation} is redundant, since it is satisfied for every order unit $b$ and every nonzero element is an order unit; and (still for minimal $\alpha$) the second condition amounts to the fact that $\mu(a) < \mu(b)$ for every $\mu \in \mcM(\alpha)$.

The property above is called the \emph{stable dynamical comparison property} in \cite{Ara_Bonicke_Bosa_Li}, and \emph{generalized dynamical comparison} in \cite{Ma2021}; we reserve ``dynamical comparison'' for another condition which we introduce now, since it seems to be commonly accepted terminology.

\begin{defn}[Buck \cite{Buck2013}, Kerr \cite{Kerr2020}]
We say that $\alpha$ has the \emph{dynamical comparison property} if for any two nonempty clopen $A,B \subseteq X$ such that $\mu(A)< \mu(B)$ for all $\mu \in \mcM(\alpha)$ one has $[A] \le [B]$.
\end{defn}

We ask that $B$ be nonempty above to rule out trivial counterexamples, for instance without that requirement no action $\alpha$ with $\mcM(\alpha)=\emptyset$ could have dynamical comparison, although this case leads to interesting questions. We will see shortly that, when $\mcM(\alpha)=\emptyset$, dynamical comparison is only possible if $\alpha$ is minimal.

We note the following fact, which shows that dynamical comparison is a property of $T(\alpha)$. The proof of Proposition 2.10 in \cite{Ara_Bonicke_Bosa_Li} directly gives this; that argument is actually in large part the same as the one used to show Lemma 3.2 in \cite{Downarowicz_Zhang}. We briefly describe the proof for the reader's convenience.

\begin{prop}\label{p:stable_comparison}
The action $\alpha$ has dynamical comparison if, and only if, for any nonzero $a,b \in T(\alpha)$ such that $\mu(a) < \mu(b)$ for all $\mu \in \mcM(\alpha)$ one has $a \le b$.
\end{prop}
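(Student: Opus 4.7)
\emph{Plan.} The reverse direction is immediate: specializing to $a = [A]$ and $b = [B]$ for nonempty clopens $A, B \subseteq X$ with $\mu(A) < \mu(B)$ throughout $\mcM(\alpha)$ gives back the definition of dynamical comparison.

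For the forward direction, assume $\alpha$ has dynamical comparison and fix nonzero $a, b \in T(\alpha)$ with $\mu(a) < \mu(b)$ for every $\mu \in \mcM(\alpha)$. I would first pick representatives $a = \sum_{i=1}^n [A_i]$ and $b = \sum_{j=1}^m [B_j]$ with all $A_i, B_j$ clopen in $X$, obtained by decomposing bounded clopen representatives in $X \times \N$ along their finitely many nonempty levels, and using permutations in $\mathfrak{S}$ to place the levels of $a$ and $b$ in disjoint ranges (this is allowed because $T(\alpha)$ is computed with the $\Gamma \times \mathfrak{S}$ action). By weak-$*$ compactness of $\mcM(\alpha)$ and continuity of $\mu \mapsto \mu(b) - \mu(a)$, there is a uniform $\varepsilon > 0$ with $\mu(b) - \mu(a) \ge \varepsilon$ on all of $\mcM(\alpha)$. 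I would then refine all the $A_i$ and $B_j$ to a single common clopen partition $X = \bigsqcup_k C_k$ of $X$, turning the problem into integer-multiplicity form $a = \sum_k p_k [C_k]$ and $b = \sum_k q_k [C_k]$ with $\sum_k (q_k - p_k) \mu(C_k) \ge \varepsilon$ for every $\mu \in \mcM(\alpha)$.

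From this common refinement I would carry out an iterative packing: at each step, peel off one $C_k$-piece from $a$ and place a $\Gamma$-translate of it into some $C_l$-piece of what remains of $b$, with the translate supplied by dynamical comparison of $\alpha$ applied to the pair of clopens at hand. The main obstacle is the synchronization of these placements: every contribution must find room in $b$ without the placements colliding, so that no multiplicity $q_l$ is exceeded and the inequalities required to apply dynamical comparison at each step stay true throughout the process. The uniform gap $\varepsilon$ provides the slack needed for the packing to close up, and the common refinement reduces the bookkeeping to a finite combinatorial problem on the vectors $(p_k)$ and $(q_k)$. This is the core of the parallel arguments in \cite{Ara_Bonicke_Bosa_Li}*{Proposition 2.10} and \cite{Downarowicz_Zhang}*{Lemma 3.2} that the author cites.
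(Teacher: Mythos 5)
Your setup---the easy direction, the choice of representatives supported on finitely many levels of $X\times\N$, and the uniform gap $\varepsilon$ from compactness of $\mcM(\alpha)$---matches the paper's. But the core of your argument, the ``iterative packing,'' is precisely the part that needs a proof, and as described it does not go through. The step ``peel off one $C_k$-piece from $a$ and place a $\Gamma$-translate of it into some $C_l$-piece of what remains of $b$, with the translate supplied by dynamical comparison'' has no justification: dynamical comparison applies only to a pair of clopen subsets of $X$ satisfying $\mu(\cdot)<\mu(\cdot)$ for \emph{every} invariant measure, and for a fixed pair ($C_k$ versus the unused part of some $C_l$) there is no reason this should hold for any choice of $l$. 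The hypothesis $\sum_k(q_k-p_k)\mu(C_k)\ge\varepsilon$ is one global inequality and yields no pointwise comparison between individual pieces; in particular the problem is \emph{not} a finite combinatorial problem on the vectors $(p_k)$ and $(q_k)$ (one can have $p_k>0=q_k$), and the whole difficulty is to chop the $a$-pieces into many small fragments that can be translated into $b$. The paper does this not with dynamical comparison but with a greedy exhaustion over an enumeration of $\tilde\Gamma=\Gamma\times\mathfrak S_p$, setting $A_n=(A\setminus\bigcup_{i=0}^{n-1}A_i)\cap\tilde\gamma_n^{-1}(B\setminus\bigcup_{i=0}^{n-1}\tilde\gamma_iA_i)$, and then proves that the leftover $A\setminus\bigcup_{i\le n}A_i$ becomes \emph{uniformly} small in measure; that uniformity is itself nontrivial and rests on an ergodicity argument followed by Dini's theorem, not merely on the compactness that produced $\varepsilon$.

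A second missing ingredient: even once the leftover is small, you must fit a clopen set into the \emph{multi-level} remainder $C=B\setminus\bigsqcup_{i}\tilde\gamma_iA_i$, and single-level dynamical comparison does not apply directly to such a target. The paper handles this with a normalization trick: replace $C$ by an equidecomposable $D=\bigcup_{i\le p}D_i\times\{i\}$ with $D_{i+1}\subseteq D_i$, so that the genuine clopen set $D_1\subseteq X$ satisfies $\mu(D_1)\ge\mu(C)/p\ge\varepsilon/p$ for all $\mu$; this is also why the leftover must be driven below $\varepsilon/p$ rather than $\varepsilon$. Your sketch names the synchronization obstacle but offers no mechanism to resolve it; supplying these two ingredients---the exhaustion with its ergodicity/Dini estimate, and the staircase reduction of the remainder---is what turns the outline into a proof.
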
 

\begin{proof}
The implication from right to left is immediate. 

For the converse, fix an action $\alpha$ with dynamical comparison; let $a,b$ be such that $\mu(a) < \mu(b)$ for all $\mu \in \mcM(\alpha)$ then find some $p$ such that $a,b \le p[X]$, and choose representatives $A,B$ of $a,b$ contained in $X\times\{1,\ldots,p\}=\tilde X$.

The group $\tilde \Gamma = \Gamma \times \mathfrak S_{p}$ acts on $\tilde X$ via the action $\tilde \alpha \colon (\gamma,\sigma) \cdot (x,i)= (\gamma x, \sigma(i))$. We may, and do, identify $T(\tilde \alpha)$ and $T(\alpha)$.

For every $\mu \in \mcM(\tilde \alpha)=\mcM(\alpha)$ we have $\mu(A) < \mu(B)$, so by compactness of $\mcM(\alpha)$ there exists $\varepsilon \in \, ]0,1[$ such that $\mu(A)+ \varepsilon < \mu(B)$ for every $\mu \in \mcM(\alpha)$.

Enumerate $\tilde \Gamma = \{\tilde \gamma_i \colon i \in \N\}$ then define $A_0 = A \cap \tilde \gamma_0^{-1}(B)$, $B_0=\tilde \gamma_0(B)$ and, for $n \ge 1$, $A_n = (A \setminus \bigcup_{i=0}^{n-1} A_i) \cap \tilde \gamma_n^{-1} (B \setminus \bigcup_{i=0}^{n-1} \tilde \gamma_i A_i)$. 

The key step of the proof is the fact that, for large enough $n$, $\mu(A \setminus \bigcup_{i=0}^n A_i) < \frac{\varepsilon}{p}$ for all $\mu \in \mcM(\alpha)$. Grant this for the moment. Using the fact that $\alpha$ has dynamical comparison and $\mu(A \setminus \bigcup_{i=0}^n A_i) < 1$ for all $\mu \in \mcM(\alpha)$, we see that there exists a clopen subset $A_\infty$ of $X$ such that $[A_\infty]=[A \setminus \bigcup_{i=0}^ n A_i]$. 

Next, let $C= B \setminus \bigsqcup_{i=0}^n \tilde \gamma_i A_i$. By definition of $\varepsilon$, $\mu(C) \ge \varepsilon$ for all $\mu \in \mcM(\alpha)$; also, there exists $D \in \Clopen(\tilde X)$ such that $[D]=[C]$, $D=\bigcup_{i=1}^p D_i \times \{i\} \in \Clopen(\tilde X)$ and $D_{i+1} \subseteq D_i$ for all $i \le p-1$. In particular $\mu(D_1) \ge \frac{\mu(C)}{p} > \mu(A_\infty)$ for all $\mu \in \mcM(\alpha)$, so by dynamical comparison $[A_\infty] \le [D_1]$. We conclude by noting that 

\[ a = \sum_{i=1}^n [A_i] + [A_\infty] \le \sum_{i=1}^n [\tilde \gamma_i A_i] + [D_1] \le \sum_{i=1}^n [\tilde \gamma_i A_i] + [C] \le b \]

To finish the proof, we go back to our claim that  $\mu(A \setminus \bigcup_{i=0}^n A_i) < \frac{\varepsilon}{p}$ for all $\mu \in \mcM(\alpha)$ as soon as $n$ is large enough. To see this, begin by fixing an ergodic measure $\nu \in \mcM(\alpha)$. If $\nu(A \setminus \bigcup_{i=0}^{\infty} A_i) \ne 0$, then by ergodicity there exists $j$ such that $(A \setminus \bigcup_{i=0}^{\infty} A_i) \cap \tilde \gamma_j^{-1}(B \setminus \bigcup_{i=0}^{+ \infty}  \tilde \gamma_i A_i) \ne \emptyset$, a contradiction. 

Thus $\mu(A \setminus \bigcup_{i=0}^{\infty} A_i)=0$ for all $\mu \in \mcM(\alpha)$.
Hence the maps $\mu \mapsto \mu( A \setminus \bigcup_{i=0}^{n} A_i)$ form a decreasing sequence of continuous maps from the compact space $\mcM(\alpha)$ to $[0,1]$ which converges pointwise to $0$. Applying Dini's theorem we obtain that the convergence is uniform, which gives the desired result.
\end{proof}

The following fact is also worth noting; for metrizable $X$ an equivalent statement appears as Corollary 3.12 of \cite{Ma2021}. Actually Ma's statement is proved for a more general definition of the type semigroup and applies also when $X$ is not $0$-dimensional (but metrizable).

\begin{lemma}\label{l:dyn_comp_unperf_below_order_unit}
Assume $\Gamma$ is amenable. Then the following are equivalent:
\begin{itemize}
    \item The action $\alpha$ has dynamical comparison.
    \item For every order unit $b \in T(\alpha)$, and every $a$ and $n$ such that $(n+1)a \le nb$, one has $a \le b$.
\end{itemize}
\end{lemma}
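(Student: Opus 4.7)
The plan is to prove each implication by pairing the hypothesis on order units with the statewise criterion from Proposition~\ref{prop: almost unperforation condition iff states agree}, using amenability via Lemma~\ref{l:order_unit_carac} to bridge between order units in $T(\alpha)$ and positivity of invariant measures.

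\medskip

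\textbf{The easy direction ($\Rightarrow$).} Assume $\alpha$ has dynamical comparison. Suppose $b \in T(\alpha)$ is an order unit and $(n+1)a \le nb$. If $a=0$ the conclusion is trivial, so assume $a \ne 0$; note that $b \ne 0$ since $X \ne \emptyset$ and $b$ is an order unit. For every $\mu \in \mcM(\alpha)$, amenability together with Lemma~\ref{l:order_unit_carac} gives $\mu(b) > 0$. Passing $(n+1)a \le nb$ through $\mu$ yields
\[ \mu(a) \le \tfrac{n}{n+1}\mu(b) < \mu(b). \]
I then invoke Proposition~\ref{p:stable_comparison} to conclude $a \le b$ directly from the pointwise strict inequality at every normalized state. (Note that amenability of $\Gamma$ acting on compact Hausdorff $X$ guarantees $\mcM(\alpha) \ne \emptyset$ via the usual invariant-mean argument, but we do not actually need this here.)

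\medskip

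\textbf{The harder direction ($\Leftarrow$).} Assume the stated almost-unperforation condition for order units, and let $A,B \subseteq X$ be nonempty clopen with $\mu(A) < \mu(B)$ for every $\mu \in \mcM(\alpha)$; set $a=[A]$, $b=[B]$. First I verify that $b$ is an order unit: since $\Gamma$ is amenable there is at least one $\nu \in \mcM(\alpha)$, and for every such $\nu$ we have $\nu(b) > \nu(a) \ge 0$, so $\nu(b) > 0$, and Lemma~\ref{l:order_unit_carac} applies.

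The core of the argument is to upgrade the given inequality at normalized states to the hypothesis of Proposition~\ref{prop: almost unperforation condition iff states agree}, which will hand us some $n$ with $(n+1)a \le nb$; the order-unit hypothesis then delivers $a \le b$. Concretely, for any $\mu \in \mcM_\infty(\alpha)$ with $\mu(b)=1$, the fact that both $[X]$ and $b$ are order units sandwiches $\mu([X])$ strictly between $0$ and $\infty$ (via $[X] \le q b$ and $b \le p [X]$), so one can normalize to $\nu := \mu/\mu([X]) \in \mcM(\alpha)$ and apply the hypothesis $\nu(A) < \nu(B)$ to recover $\mu(a) < 1 = \mu(b)$. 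Combined with $a \le m b$ (some $m$) from $b$ being an order unit, Proposition~\ref{prop: almost unperforation condition iff states agree} produces the desired $n$.

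\medskip

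\textbf{What I expect to be delicate.} The substantive step is the passage from $\mcM(\alpha)$ to $\mcM_\infty(\alpha)$ in the backward direction: one has to know that the $\mu \in \mcM_\infty(\alpha)$ testing $b$ have finite, nonzero total mass $\mu([X])$ so that the normalization trick is legal. This is exactly where the order-unit status of $b$ (and the fact that $[X]$ is itself an order unit) is essential, and it is also the only place where Lemma~\ref{l:order_unit_carac} --- hence amenability --- is really used. Everything else is a direct appeal to the two tools already assembled: Proposition~\ref{p:stable_comparison} (translating dynamical comparison into a $T(\alpha)$-statement) and Proposition~\ref{prop: almost unperforation condition iff states agree} (trading statewise inequalities for a ``$(n{+}1)a \le nb$'' witness).
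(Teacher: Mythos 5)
Your proof is correct and follows essentially the same route as the paper: the forward direction pairs Lemma~\ref{l:order_unit_carac} with Proposition~\ref{p:stable_comparison}, and the backward direction feeds the hypothesis into Proposition~\ref{prop: almost unperforation condition iff states agree}. The normalization step you flag as delicate (passing from $\mcM(\alpha)$ to states in $\mcM_\infty(\alpha)$ with $\mu(b)=1$ via the two order-unit inequalities $[X]\le qb$ and $b\le p[X]$) is exactly the point the paper's proof elides, and your treatment of it is right.
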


\begin{proof}
Assume that $\alpha$ has dynamical comparison, and $a,b \in T(\alpha)$ are such that $b$ is an order unit and $(n+1)a \le nb$ for some $n$. Then  $\mu(b)>0$ for all $\mu \in \mcM(\alpha)$ since $b$ is an order unit, whence $\mu(a)< \mu(b)$ for all $\mu \in \mcM(\alpha)$. So dynamical comparison gives, as desired, that  $a \le b$.

Conversely, assume that the second condition holds, and that $a,b \in T(\alpha)$ are such that $\mu(a) < \mu(b)$ for all $\mu \in \mcM(\alpha)$. Then Lemma \ref{l:order_unit_carac} implies that $b$ is an order unit, so $a \le nb$ for some $n$ and $\mu(a) < \mu(b)$ for every $\mu \in \mcM_\infty(\alpha)$ such that $\mu(b)=1$. Proposition \ref{prop: almost unperforation condition iff states agree} then implies that $(n+1)a \le nb$ for some $n$, whence $a \le b$ and $\alpha$ has dynamical comparison.
\end{proof}

\begin{lemma}\label{l:noinvmeasure}
Assume that $\mcM(\alpha)= \emptyset$. Then $\alpha$ has dynamical comparison iff $\alpha$ is minimal and $a \le b$ for every nonzero $a,b \in T(\alpha)$.
\end{lemma}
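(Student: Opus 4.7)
My plan is to split the proof into the two directions, with both reducing quickly to the stable comparison reformulation recorded as Proposition \ref{p:stable_comparison}.

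For the ``if'' implication, I would observe that since $\mcM(\alpha)=\emptyset$, the premise $\mu(A)<\mu(B)$ for all $\mu \in \mcM(\alpha)$ in the definition of dynamical comparison is vacuously satisfied. Hence dynamical comparison reduces to the requirement that $[A] \le [B]$ for every clopen $A$ and every nonempty clopen $B \subseteq X$. When $A=\emptyset$ this is trivial; when $A$ is nonempty, both $[A]$ and $[B]$ are nonzero elements of $T(\alpha)$, and the hypothesis gives $[A] \le [B]$.

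For the ``only if'' implication, I would first feed dynamical comparison into Proposition \ref{p:stable_comparison}: the vacuity of $\mcM(\alpha)=\emptyset$ immediately yields $a \le b$ for all nonzero $a,b \in T(\alpha)$. What remains is minimality of $\alpha$. Suppose, for contradiction, that there is a nonempty proper closed $\alpha$-invariant set $Y \subsetneq X$. Using $0$-dimensionality I would pick a nonempty clopen $V \subseteq X \setminus Y$; by invariance of $Y$ one has $\gamma V \cap Y = \gamma(V \cap Y) = \emptyset$ for every $\gamma \in \Gamma$, so $\Gamma V \cap Y = \emptyset$. Since $[V]$ is nonzero, the property just established yields $[X] \le [V]$ in $T(\alpha)$.

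The final and main step is to translate $[X] \le [V]$ into an honest covering statement $\Gamma V = X$ at the level of $X$, which contradicts $\Gamma V \cap Y = \emptyset$ and $Y \neq \emptyset$. Unpacking: there exist a bounded clopen $C \subseteq X \times \{k \ge 1\}$ and a $\tilde \Gamma$-equidecomposition between $(X\times\{0\}) \sqcup C$ and $V \times \{0\}$. Refining the partition so each piece lies in a single fibre $X \times \{k_i\}$ and writing it as $F_i \times \{k_i\}$ with $F_i$ clopen in $X$, each piece moves via some $(\delta_i,\sigma_i) \in \Gamma \times \mathfrak S$; landing inside $V\times\{0\}$ forces $\sigma_i(k_i)=0$ and $\delta_i F_i \subseteq V$. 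Restricting to the indices with $k_i=0$ recovers a clopen partition $X = \bigsqcup_{k_i=0} F_i$ with $F_i \subseteq \delta_i^{-1}V$, whence $X \subseteq \Gamma V$, giving the desired contradiction. The conceptual content of this last step is simply that the $\mathfrak S$-factor of $\tilde \Gamma$ only permutes $\N$-fibres and is invisible on the $X$-projection, so an inequality $[X]\le[V]$ in $T(\alpha)$ really does encode a paradoxical covering of $X$ by finitely many $\Gamma$-translates of $V$.
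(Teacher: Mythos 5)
Your proof is correct and follows essentially the same route as the paper's: the right-to-left direction is immediate from the vacuity of the measure condition, and the left-to-right direction combines Proposition \ref{p:stable_comparison} with the observation that $[X]\le[V]$ forces $X$ to be covered by finitely many $\Gamma$-translates of $V$. Your explicit unpacking of that last covering step (projecting the equidecomposition to the $X$-factor) is a detail the paper leaves implicit, but the argument is the same.
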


\begin{proof}
The implication from right to left is immediate.

For the converse implication, assume that $\mcM(\alpha) = \emptyset$ and $\alpha$ has dynamical comparison; then Proposition \ref{p:stable_comparison} implies that $a \le b$ for all $a,b \in T(\alpha)$. Fix a nonempty clopen $U$; we must have $[X] \le [U]$, whence $X$ is covered by translates of $U$. This proves that $\alpha$ is minimal.
\end{proof}

In the case where $\mcM(\alpha) = \emptyset$, or $\Gamma$ is amenable and $\alpha$ is free, and $X$ is the Cantor space, the following result appears in \cite{Ma2019}; in the second-countable case this is in \cite{Ara_Bonicke_Bosa_Li} (for ample groupoids). This statement also has a precursor in \cite{Kerr2020}*{Lemma~13.1}.

\begin{prop}\label{p:minimal_dynamical_unperforated}
Assume that $\alpha$ is minimal. Then $\alpha$ has dynamical comparison if, and only if, $T(\alpha)$ is almost unperforated.
\end{prop}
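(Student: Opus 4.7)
The strategy is to reduce both implications to the reformulations already in hand: Proposition \ref{p:stable_comparison} lets us talk about dynamical comparison directly in terms of arbitrary elements of $T(\alpha)$, and Lemma \ref{l:carac_comparison_unperforation} recasts almost unperforation as a comparison condition involving the (possibly $\infty$-valued) states in $\mcM_\infty(\alpha)$. Minimality enters through the fact that every nonzero element of $T(\alpha)$ is then an order unit.

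For the implication that almost unperforation implies dynamical comparison, I would take, via Proposition \ref{p:stable_comparison}, two nonzero elements $a,b$ with $\mu(a) < \mu(b)$ for every $\mu \in \mcM(\alpha)$ and aim to apply Lemma \ref{l:carac_comparison_unperforation}. Since $b$ is an order unit, there is some $k$ with $a \le k b$. The remaining task is to check that $\mu(a)<1$ for every $\mu \in \mcM_\infty(\alpha)$ with $\mu(b)=1$. Fix such a $\mu$ and use that both $b$ and $[X]$ are order units to choose $k_1,k_2$ with $[X]\le k_1 b$ and $b\le k_2[X]$; this yields $0<1/k_2 \le \mu([X]) \le k_1 <\infty$, so $\mu/\mu([X])$ is a well-defined element of $\mcM(\alpha)$, and the hypothesis $\mu(a)/\mu([X]) < \mu(b)/\mu([X])=1/\mu([X])$ gives $\mu(a)<1$.

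For the converse, I would start from $(n+1)a \le nb$ and aim for $a\le b$. The case $a=0$ is immediate; if $a\ne 0$ then by conicality of $T(\alpha)$ we have $(n+1)a\ne 0$, and hence $nb\ne 0$ and $b\ne 0$. If $\mcM(\alpha)=\emptyset$, Lemma \ref{l:noinvmeasure} yields $a\le b$ at once. Otherwise, fix any $\mu\in\mcM(\alpha)$; by minimality $a$ is an order unit, so $\mu(a)>0$, and $(n+1)\mu(a)\le n\mu(b)$ forces $\mu(a)<\mu(b)$. Since this holds for every $\mu\in\mcM(\alpha)$, Proposition \ref{p:stable_comparison} delivers $a \le b$.

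The only nonroutine point will be the finiteness and positivity of $\mu([X])$ in the first direction: states in $\mcM_\infty(\alpha)$ may a priori take the value $+\infty$, so one must really use that $b$ and $[X]$ bound each other as order units to rule this out before normalizing. Everything else amounts to bookkeeping with the two quoted results.
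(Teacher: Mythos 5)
Your proof is correct, and its skeleton matches the paper's: both implications are routed through Proposition \ref{p:stable_comparison} and Lemma \ref{l:carac_comparison_unperforation}, with minimality entering via the observation that any $\mu \in \mcM_\infty(\alpha)$ with $\mu(b)=1$ satisfies $0<\mu([X])<\infty$ and hence normalizes to an element of $\mcM(\alpha)$. Where you genuinely diverge is in the case $\mcM(\alpha)=\emptyset$. The paper treats that case separately: to get dynamical comparison from almost unperforation it invokes Tarski's theorem to produce, for each nonzero $a$, some $n$ with $(n+1)a\le na$, deduces $2a\le a$ by almost unperforation, and then uses that $a$ is an order unit to conclude $b\le a$ for all $b$. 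Your normalization argument makes this detour unnecessary: when $\mcM(\alpha)=\emptyset$ it shows there is simply no $\mu\in\mcM_\infty(\alpha)$ with $\mu(b)=1$, so the state condition in Lemma \ref{l:carac_comparison_unperforation} holds vacuously and $a\le b$ follows uniformly for all nonzero $a,b$ --- a tidier route to the same conclusion, at the cost of still leaning on the nontrivial input behind Lemma \ref{l:carac_comparison_unperforation} (Proposition \ref{prop: almost unperforation condition iff states agree}) where the paper substitutes Tarski's theorem. Your converse direction (evaluating a fixed invariant measure on $(n+1)a\le nb$ and using $\mu(a)>0$, which holds since $a$ is an order unit) is a correct, slightly more hands-on version of the paper's appeal to the equivalence of the two reformulated conditions. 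The one point deserving an explicit word is that $\mu/\mu([X])$ takes only finite values, which again follows from $[X]$ being an order unit, so that it genuinely lies in $\mcM(\alpha)$; this is implicit in your inequalities but should be stated.
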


\begin{proof}
Assume first that $\mcM(\alpha) \ne \emptyset$. By minimality, if $\mu \in \mcM_\infty(\alpha)$ is such that $\mu(b)=1$ for some $b$ then also $\mu(X) < \infty$; so Lemma \ref{l:carac_comparison_unperforation} combined with Proposition \ref{p:stable_comparison} give the desired result.

Now, assume that $\mcM(\alpha)=\emptyset$. If $\alpha$ has dynamical comparison then $a \le b$ for any nonzero $a,b \in T(\alpha)$ by Lemma \ref{l:noinvmeasure}, hence $T(\alpha)$ is almost unperforated. To prove the converse, assume that $T(\alpha)$ is almost unperforated. Since $\alpha$ is minimal and $\mcM(\alpha)= \emptyset$, by Tarski's theorem for every nonzero $a$ there exists $n$ such that $(n+1)a \le na$. Using associativity, we obtain $(n+1) 2a = 2(n+1)a  \le na$. Almost unperforation then gives $2 a \le a$, whence $na \le a$ for any $n \in \N^*$. Since $\alpha$ is minimal, $a$ is an order unit in $T(\alpha)$ so $b \le a$ for any $b \in T(\alpha)$. This proves that the action has dynamical comparison.
\end{proof}

In the minimal case, one can provide yet another characterization of dynamical comparison.

\begin{defn}
Assume that $\alpha$ is minimal; we say that $T(\alpha)$ has the \emph{weak comparability property} if 
$$\forall a \ne 0 \ \exists k \in \N^* \  \forall b  \quad kb \le [X] \Rightarrow b \le a $$
\end{defn}

Note that our formulation above only applies to the case where $T(\alpha)$ is simple (i.e.~$\alpha$ is minimal). One can formulate weak comparability in non-simple semigroups but the definition is more involved; here we only use it for minimal $\alpha$ so we give the simpler reformulation above. One could add the condition that $a=[A]$ for some $A \in \Clopen(X)$ and obtain an equivalent definition (for a more general formulation of weak comparability, a detailed discussion and references, see section 1.6 of \cite{Wehrung2017}).

\begin{prop}\label{prop: link between weak comparability, almost unperforation and cancellation}
Assume that $\alpha$ is minimal. Then:
\begin{enumerate}
\item $T(\alpha)$ satisfies weak comparability if, and only if, it is almost unperforated.
\item If $T(\alpha)$ is stably finite (equivalently, if $\mcM(\alpha) \ne \emptyset$) then these conditions imply that $T(\alpha)$ is cancellative.
\end{enumerate}
\end{prop}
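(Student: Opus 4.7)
My plan is to treat the two parts separately, using Proposition~\ref{prop: almost unperforation condition iff states agree}, Proposition~\ref{p:minimal_dynamical_unperforated}, and the refinement structure of $T(\alpha)$.

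For Part~(1), the direction ``almost unperforation implies weak comparability'' is straightforward: given $a\ne 0$, simplicity provides $m$ with $[X]\le ma$; setting $k=m+1$, any $b$ with $kb\le [X]$ satisfies $(m+1)b\le ma$, so almost unperforation delivers $b\le a$.

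For the converse, I would show that weak comparability implies dynamical comparison, and then invoke Proposition~\ref{p:minimal_dynamical_unperforated}. Take nonzero $a,b\in T(\alpha)$ with $\mu(a)<\mu(b)$ for all $\mu\in\mcM(\alpha)$, and aim at $a\le b$. If $\mcM(\alpha)=\emptyset$, Tarski's theorem gives $(N+1)[X]\le N[X]$ for some $N$; combined with simplicity and refinement, a standard argument then forces all nonzero elements of $T(\alpha)$ to be comparable, so $a\le b$ holds automatically. If $\mcM(\alpha)\ne\emptyset$, compactness furnishes $\varepsilon>0$ with $\mu(b)-\mu(a)\ge\varepsilon$ uniformly, and since minimality forces $\mu([X])<\infty$ whenever $\mu\in\mcM_\infty(\alpha)$ satisfies $\mu(b)=1$, such $\mu$ descends to $\mcM(\alpha)$ after normalization and inherits the strict inequality. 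Proposition~\ref{prop: almost unperforation condition iff states agree} then yields $(n+1)a\le nb$ for some $n$. The main obstacle is converting $(n+1)a\le nb$ into $a\le b$ using only weak comparability: I would partition a representative of $a$ into clopens $A_i$ small enough that the weak comparability constant $k$ associated to $b$ gives $k[A_i]\le [X]$, hence $[A_i]\le b$; one can then embed the $A_i$ successively inside a representative of $b$ in the style of the chipping-away argument of Proposition~\ref{p:stable_comparison}, controlling the residue via the state gap $\varepsilon$.

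For Part~(2), assume $\mcM(\alpha)\ne\emptyset$, so $T(\alpha)$ is stably finite and $\le$ is a partial order. Given $a+c=b+c$, refinement yields $a=x+y$, $b=x+z$, $c=y+w=z+w$, and it suffices to show $y=z$. Every $\mu\in\mcM(\alpha)$ satisfies $\mu(y)=\mu(z)$, which for nonzero $y,z$ (the other cases being trivial) gives $\mu(ky)<\mu((k+1)z)$ strictly for every $k\ge 1$, and symmetrically. Proposition~\ref{prop: almost unperforation condition iff states agree} together with almost unperforation then yields $ky\le(k+1)z$ and $kz\le(k+1)y$ for all $k$. The main obstacle is to promote these asymptotic inequalities to the equality $y=z$; I would invoke the classical theorem from the theory of refinement monoids (see~\cite{Wehrung2017}) that a simple, stably finite refinement monoid satisfying almost unperforation is cancellative, from which $y=z$, and hence $a=b$, follows.
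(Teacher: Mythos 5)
Your proposal attempts to actually prove this proposition, whereas the paper does not: it simply cites Proposition~1.6.8 of \cite{Wehrung2017}, attributing~(1) to Theorem~4.1 of \cite{Ara_Goodearl_Pardo_Tyukavkin} and~(2) to Theorem~1.7 of \cite{Ara_Pardo1996}. Your argument that almost unperforation implies weak comparability is correct and complete. The problems are in the two remaining implications. For part~(2), your final step invokes ``the classical theorem that a simple, stably finite refinement monoid satisfying almost unperforation is cancellative'' --- but that \emph{is} statement~(2). So either you are allowed to cite that theorem, in which case the refinement manipulations and the inequalities $ky\le(k+1)z$ are redundant and you should just cite it (as the paper does), or you are not, in which case the argument is circular and proves nothing. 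Note also that the reduction via refinement from $a+c=b+c$ to $y+w=z+w$ makes no progress by itself: it is the same cancellation problem again.

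For the hard direction of~(1), the chipping-away scheme does not work as described. The weak comparability constant $k$ in ``$kb\le[X]\Rightarrow b\le a$'' is attached to the \emph{target} $a$. Fixing $k=k(b)$ and partitioning $a$ into pieces $A_i$ with $k[A_i]\le[X]$ only yields $[A_i]\le b$ for each $i$ separately, which does not sum to $a\le b$; to embed the pieces disjointly and successively into a representative of $b$ you would need weak comparability relative to the shrinking remainder of $B$, whose constant you do not control and which may force ever finer partitions of what is left of $a$ --- the process is not shown to terminate. Moreover, in the case $\mcM(\alpha)=\emptyset$ you assert that Tarski's theorem plus simplicity and refinement force all nonzero elements to be comparable; but that is exactly the corresponding case of Proposition~\ref{p:minimal_dynamical_unperforated}, whose proof in the paper uses almost unperforation --- precisely what you are trying to establish --- to pass from $(n+1)a\le na$ to $2a\le a$. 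As it stands, the implication ``weak comparability $\Rightarrow$ almost unperforation'' is genuinely not proven; if you want a self-contained argument you will need to reproduce the substance of \cite{Ara_Goodearl_Pardo_Tyukavkin}*{Theorem~4.1} rather than adapt the proof of Proposition~\ref{p:stable_comparison}.
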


 This follows from Proposition 1.6.8 of \cite{Wehrung2017}, which contains a wealth of information on refinement monoids (the first statement follows from Theorem 4.1 of \cite{Ara_Goodearl_Pardo_Tyukavkin};
 the second one is a particular case of Theorem 1.7 in \cite{Ara_Pardo1996}). 

Recall that Downarowicz and Zhang \cite{Downarowicz_Zhang} proved recently that every Cantor action of a group of subexponential growth has the dynamical comparison property; Kerr and Naryshkin \cite{Kerr_Narishkyn} even more recently showed the same result for free actions of elementary amenable groups. It follows that free minimal Cantor actions of such amenable groups are all such that $T(\alpha)$ is cancellative. Interestingly, I do not know of a direct argument to prove cancellativity for these actions, and it seems to be an open problem whether clopen type semigroups of minimal Cantor actions of amenable groups are always cancellative.

\subsection{Tameness and dense locally finite groups}\label{s:tame}
In this section we assume that $X$ is metrizable. We recall that the group $\Homeo(X)$ is endowed with a natural Polish group topology, which comes from viewing homeomorphisms of $X$ as automorphisms of the Boolean algebra $\Clopen(X)$. A basis of neighborhoods of $1$ for this topology is given by subgroups of the form 
\[\{g \in \Homeo(X) \colon \forall A \in \mcA \ gA=A \} \]
where $\mcA$ ranges over all clopen partitions of $X$.

What we really care about in this section is the case of minimal Cantor actions of countable amenable groups. We recall some terminology.

\begin{defn}[Krieger \cite{Krieger1980}]
Given an algebra $\mcA$ of clopen subsets of $X$ and $G$ a subgroup of $\Homeo(X)$, we let $[G,\mcA]$ be the smallest subgroup of $\Homeo(X)$ containing $G$ and such that, for any partition $A_1,\ldots,A_n$ of $X$ with $A_i \in \mcA$, and $g_1,\ldots,g_n \in G$, if the mapping $h \colon x \in A_i \mapsto g_i(x)$ is an homeomorphism, then $h \in [G,\mcA]$.

We say that $G$ is a \emph{full group} if $G=[G,\Clopen(X)]$.

A subgroup $G$ of $\Homeo(X)$ is an \emph{ample group} if it is a countable, locally finite full group and $\{x \colon g(x)=x\}$ is clopen for every $g \in G$.
\end{defn}

\begin{defn}
The \emph{topological full group} $\llbracket \alpha \rrbracket$ of an action $\alpha \colon \Gamma \actson X$ is the smallest full group which contains $\alpha(\gamma)$ for every $\gamma \in \Gamma$. 
\end{defn}

Whenever $\alpha$ is a minimal $\Z$-action on a Cantor space $X$, there exists an ample group $\Lambda$ which is dense in $\llbracket \alpha \rrbracket$ (equivalently, $\Lambda$ and $\alpha$ have the same orbits on clopen sets). Any two such ample groups are conjugated, and they play an important role in the Giordano--Putnam--Skau classification of minimal $\Z$-actions up to orbit equivalence (see \cite{GPS_OE} or \cite{mellerayInvariantMeasuresOrbit2021} and the references given therein). 

Our aim in this section is to prove that existence of a dense locally finite subgroup of $\llbracket \alpha \rrbracket$ can be equivalently formulated as a property of $T(\alpha)$.

\begin{defn}
$T(\alpha)$ is \emph{unperforated} if for every $a,b \in T(\alpha)$ and every $n \in \N^*$ one has $na \le nb \Rightarrow a \le b$.
\end{defn}


%

\begin{lemma} \label{l:ample_is_unperforated}
Assume that $\alpha$ is an action on a compact, $0$-dimensional metrizable space $X$ such that $\llbracket \alpha \rrbracket$ has a dense, locally finite subgroup. Then $T(\alpha)$ is unperforated and cancellative.
\end{lemma}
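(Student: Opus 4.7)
The plan is to use the density and local finiteness of $\Lambda$ to reduce any equation in $T(\alpha)$ to a statement about the type semigroup of a finite-group action, which can then be treated directly.

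First I would show that the equidecomposability relation defining $T(\alpha)$ is unchanged if $\tilde\Gamma = \Gamma \times \mathfrak S$ is replaced by $\tilde\Lambda = \Lambda \times \mathfrak S$: for any clopen $A \subseteq X$ and any $\gamma \in \Gamma$, the set $\{h \in \Homeo(X) \colon h(A) = \alpha(\gamma)(A)\}$ is a clopen neighborhood of $\alpha(\gamma)$ in $\llbracket\alpha\rrbracket$ and hence meets $\Lambda$ by density; the same argument applies to $\tilde\Lambda$ on $Y$. Any particular equation in $T(\alpha)$ is then witnessed by an equidecomposition using only finitely many elements of $\tilde\Lambda$; by local finiteness of $\Lambda$ and boundedness of the clopen sets involved, these elements all lie in a finite subgroup $\tilde F = F \times \mathfrak S_n$ of $\tilde\Lambda$ acting on $Z = X \times \{0, \ldots, n-1\}$. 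The rest of the analysis takes place in the type semigroup of this finite-group action.

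For cancellativity, given $[A] + [C] = [B] + [C]$ with $A, B, C$ pairwise disjoint and a piecewise-$\tilde F$ witnessing bijection $\phi \colon A \sqcup C \to B \sqcup C$, I would analyze the forward $\phi$-trajectory of each $x \in A$. Every iterate $\phi^k(x)$ lies in the finite $\tilde F$-orbit of $x$, and by injectivity of $\phi$ combined with $A \cap (B \sqcup C) = \emptyset$ the iterates cannot remain in $C$ indefinitely (they would be pairwise distinct, and together with $x$ itself would exceed the orbit size). Hence the trajectory reaches $B$ within $|\tilde F|$ steps; setting $\pi(x)$ to be the first such point and performing the symmetric analysis of backward trajectories from $y \in B$ gives a bijection $\pi \colon A \to B$ which is piecewise-$\tilde F$ on the finite clopen partition of $A$ indexed by trajectory type, so $[A] = [B]$ in $T(\alpha)$.

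For unperforation, given $n[A] + [D] = n[B]$ witnessed within a finite group $\tilde F$, I would use the continuous $\tilde F$-invariant $\N$-valued function $\Phi_A(z) = \sum_{f \in \tilde F} |\{k \colon (fz, k) \in A\}|$, which is preserved under $\tilde F \times \mathfrak S$-equidecomposition; the equation gives $\Phi_D = n(\Phi_B - \Phi_A)$, and an orbit-size analysis shows that $|\tilde F_z|$ divides $\Phi_B(z) - \Phi_A(z)$ at every $z$. I expect the main obstacle to be constructing a bounded clopen $E \subseteq Y$ with $[A] + [E] = [B]$: the plan is to use the $\mathfrak S$-permutations of the $\N$-coordinate to split $D$ into $n$ clopen pieces each of $\Phi$-value $\Phi_B - \Phi_A$, and take $E$ to be one of them. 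Verifying that $\Phi$ is a complete invariant of $\tilde F \times \mathfrak S$-equidecomposition---so that $\Phi_{A \sqcup E} = \Phi_B$ indeed yields $[A] + [E] = [B]$---is the most delicate step, and is handled by an orbit-by-orbit construction exploiting the finiteness of $\tilde F$ together with compactness to assemble the local equidecompositions on a single finite clopen partition of $Z$.
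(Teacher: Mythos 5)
Your opening reduction is exactly the one the paper uses: density of the locally finite subgroup $\Lambda$ lets you replace each $\gamma_i$ by some $\lambda_i \in \Lambda$ acting the same way on the (finitely many, bounded) clopen pieces, and local finiteness puts all the witnesses inside a finite subgroup. Your cancellativity argument is a correct, self-contained version of what the paper dispatches by a counting argument after passing to a finite set: the forward-trajectory analysis is the standard cancellation proof for finite-group type semigroups, and your observation that ``trajectory type'' yields a finite clopen partition on which $\pi$ is piecewise a fixed word in $\tilde F$ is precisely what makes it work in the clopen category. That part is fine as written (modulo the routine step of arranging disjoint representatives $A,B,C$ by placing them in disjoint levels of $Y$).

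For unperforation, your route through the invariant $\Phi$ diverges from the paper and is where the real work remains. The paper's device is to pass to the finite Boolean algebra generated by all translates $f^{-1}(A_i)$, $f^{-1}(B_j)$ for $f$ in the finite group: this algebra is $\tilde F$-invariant, so $\tilde F$ genuinely \emph{permutes its atoms}, and the whole problem collapses to the type semigroup of a finite group acting on a finite set, where unperforation is classical (a matching argument, \cite{Wagon2016}*{Theorem~10.20}). This is also exactly what closes the step you flag as delicate: the completeness of $\Phi$ as an invariant is awkward to prove ``orbit-by-orbit'' over points, because point-stabilizers and the coincidence pattern of an orbit $\{fz\}_{f\in\tilde F}$ are \emph{not} clopen functions of $z$ in general (only closed), so the partition of $Z$ by ``orbit configuration'' need not be clopen. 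Working at the level of atoms instead of points removes this obstacle entirely: one checks that for $z$ in an atom $P$ one has $\Phi_A(z)=|\tilde F_P|\cdot c_A(P)$, where $\tilde F_P$ is the stabilizer of the atom and $c_A(P)$ is the number of (atom, level) pairs of $A$ in the atom-orbit of $P$, so $\Phi_A=\Phi_B$ is equivalent to equality of the atom-orbit counts, which is a complete invariant realized by permuting atoms. The same reduction makes the clopen splitting of $D$ into $n$ pieces of equal $\Phi$-value immediate (split the (atom, level) pairs over each atom-orbit into $n$ equal groups), whereas as stated your appeal to $\mathfrak S$-permutations of the $\N$-coordinate does not by itself produce such a splitting. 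So: the skeleton is right and the cancellativity half is complete, but you should either import the finite-invariant-partition reduction to finish the unperforation half, or simply cite the finite-set division theorem after that reduction, as the paper does.
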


\begin{proof}
Let $a,b \in T(\alpha)$ and $n \in \N$ be such that $na \le nb$.

Write $a= \sum_{i=1}^n [A_i]$, $b= \sum_{j=1}^m [B_j]$ where $A_i, B_j$ are clopen subsets of $X$. The assumption on $\llbracket \alpha \rrbracket$ implies that there exists a finite subgroup $\Lambda$ of $\llbracket \alpha \rrbracket$ and a finite $\Lambda$-invariant clopen partition $\mathcal A$ such that the relation $na \le nb$ is witnessed using elements of $\Lambda$ and $\mcA$. Letting $Z$ denote the finite set of atoms of $\mathcal A$, the type semigroup of the action $\Lambda \actson Z$ is unperforated (by a simple matching argument, see e.g. \cite{Wagon2016}*{Theorem~10.20}) and from this we see that $a\le b$ is witnessed by elements of $\Lambda$ and $\mcA$.

The proof of cancellativity is similar, since the type semigroup of an action of a finite group $\Lambda$ on a finite set $Z$ is cancellative by a simple counting argument: using the same notations as above $a=b$ means that, for each orbit $O$ of the action $\Lambda \actson Z$, there are as many elements of (a representative of) $a$ equivalent to some element of $O$ as there are such elements in $b$.
\end{proof}

Remarkably, the converse property also holds.

\begin{prop}\label{p:tame_gives_ample}
Let $\alpha$ be an action on a compact $0$-dimensional metrizable space $X$. Assume that $T(\alpha)$ is unperforated and cancellative. Then $\llbracket \alpha \rrbracket$ has a dense ample subgroup.
\end{prop}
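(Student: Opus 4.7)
The approach is a Krieger-style back-and-forth construction. Since $X$ is metrizable and $0$-dimensional, $\Clopen(X)$ is countable, and so is $\llbracket \alpha \rrbracket$; enumerate pairs $(g_n,\mathcal{P}_n)$ with $g_n \in \llbracket \alpha \rrbracket$ and $\mathcal{P}_n$ a finite clopen partition of $X$, so that every such pair occurs at least once. The plan is to build inductively a chain of finite subgroups $\Lambda_0 \le \Lambda_1 \le \cdots$ of $\llbracket \alpha \rrbracket$ and a sequence of finite clopen partitions $\mathcal{A}_0, \mathcal{A}_1, \ldots$, each refining the previous one as well as $\mathcal{P}_n$, such that each $\Lambda_n$ permutes the atoms of $\mathcal{A}_n$ and such that some $\lambda_n \in \Lambda_n$ satisfies $\lambda_n(A) = g_n(A)$ for every $A \in \mathcal{P}_n$.

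Granted the induction, set $\Lambda = [\bigcup_n \Lambda_n, \Clopen(X)]$; this is by construction a full subgroup of $\llbracket \alpha \rrbracket$. It is dense in $\llbracket \alpha \rrbracket$ because every basic open neighborhood of any element of $\llbracket \alpha \rrbracket$ is specified by some pair $(g_n,\mathcal{P}_n)$ in our enumeration, and $\lambda_n \in \Lambda_n \subseteq \Lambda$ lies in that neighborhood. It is locally finite because every finite subset of $\Lambda$ lies in some $[\Lambda_n,\mathcal{A}_n]$, which is a subgroup of the symmetric group on the finite atom set of $\mathcal{A}_n$ and hence finite. Every element of $\Lambda$ has clopen fixed-point set (a union of atoms of some $\mathcal{A}_n$), so $\Lambda$ is ample.

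The entire content of the proof is thus the inductive step: given $(\Lambda_n,\mathcal{A}_n)$ and the next target $(g_{n+1},\mathcal{P}_{n+1})$, produce a finite clopen refinement $\mathcal{A}_{n+1}$ of both $\mathcal{A}_n$ and $\mathcal{P}_{n+1}$ and a finite group $\Lambda_{n+1} \supseteq \Lambda_n$ permuting its atoms and containing an element that agrees with $g_{n+1}$ on the atoms of $\mathcal{P}_{n+1}$. Identifying atoms of $\mathcal{A}_n$ with a finite set $Z$ on which $\Lambda_n$ acts, this becomes a combinatorial matching problem between $\Lambda_n$-orbits in $Z$, where the allowed moves are prescribed by $\llbracket \alpha \rrbracket$. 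This is precisely the Krieger-type extension lemma proved in the first appendix, adapted from the $\Z$-case in \cite{mellerayInvariantMeasuresOrbit2021}. The hypotheses on $T(\alpha)$ enter as expected: unperforation is what lets one promote an $n$-fold equivalence of clopen sets, implicit in the $\Lambda_n$-orbit structure on $Z$, to a single one; cancellativity is what lets one subtract off the portion of the bijection already implemented by $\Lambda_n$. Together they reduce the extension problem to a Hall-type matching between $\Lambda_n$-orbits that can be realized by elements of $\llbracket \alpha \rrbracket$ after a sufficient clopen refinement. The main obstacle is this inductive step: the delicate point is the translation between the (possibly infinite) refinement monoid $T(\alpha)$, in which the hypotheses are formulated, and the finite combinatorics of $Z$ on which the actual matching takes place.
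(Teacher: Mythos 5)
Your proposal has the right outer shape (an increasing chain of finite groups acting by permutations on a refining sequence of finite clopen partitions, whose union generates a dense ample group), but the entire mathematical content lies in the inductive step, and there you have a genuine gap. You assert that the hypotheses on $T(\alpha)$ ``enter as expected'' to reduce the extension problem to a Hall-type matching, but you never explain how. The actual mechanism in the paper is quite specific: since $T(\alpha)$ is a conical refinement monoid which is unperforated and cancellative, the Effros--Handelman--Shen theorem (via \cite{Ara_Goodearl2015}) exhibits it as an inductive limit of free commutative monoids $\N^{n_i}$. Pushing the finitely many types of atoms of the current partition far enough along this sequence, every relation $[A]=[B]$ among unions of atoms becomes an equality of multiplicity vectors over a common finite set of ``tiles'' $\psi(e_1),\dots,\psi(e_p)$; refining each atom into copies of these tiles is what makes every type-equality realizable by a permutation of atoms, and is what allows the previously constructed automorphisms to extend. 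Nothing in your writeup supplies a substitute for this step; unperforation and cancellativity by themselves do not obviously yield the needed common refinement, and this is precisely the ``delicate translation'' you flag as the main obstacle without resolving it.

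A second, related problem: you claim the inductive step ``is precisely the Krieger-type extension lemma proved in the first appendix.'' It is not. The appendix lemma extends isomorphisms between unit systems that are already compatible with a given full group; in the paper it is used only at the very end, to conjugate the abstractly constructed ample group into $\llbracket\alpha\rrbracket$. This matters because the paper deliberately does \emph{not} build the finite groups inside $\llbracket\alpha\rrbracket$: it works with groups $G_{\mcA_n}$ of Boolean-algebra automorphisms satisfying only $[gU]=[U]$ for atoms $U$, obtains a dense ample subgroup of $\Homeo(X)$ with the same clopen orbits as $\llbracket\alpha\rrbracket$, and then conjugates it into $\llbracket\alpha\rrbracket$ by Krieger's argument. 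Your plan of constructing the finite subgroups $\Lambda_n$ directly inside $\llbracket\alpha\rrbracket$ would, if carried out, avoid that conjugation step, but it makes the extension problem strictly harder (the new group elements must simultaneously lie in $\llbracket\alpha\rrbracket$, normalize the orbit structure of $\Lambda_n$, and generate a finite group with it), and you give no argument for why this can be done.
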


Note that since $T(\alpha)$ is cancellative  we have for any $U,V \in \Clopen(X)$ that $[U]=[V]$ iff there exists $g \in \llbracket \alpha \rrbracket$ such that $gU=V$.

\begin{proof}
Given a finite Boolean subalgebra $\mcA$ of $\Clopen(X)$, we denote by $G_\mcA$ the subgroup of $\mathrm{Aut}(\mcA)$ made up of all automorphisms such that $[g(U)]=[U]$ for all $U \in \mcA$.

We claim that one can build inductively a sequence of finite Boolean subalgebras $\mcA_n$ of $\Clopen(X)$ such that:
\begin{itemize}
    \item For all $n$ $\mcA_n$ is a subalgebra of $\mcA_{n+1}$, and every element of $G_{\mcA_n}$ extends to an element of $G_{\mcA_{n+1}}$.
    \item For every $U,V \in \Clopen(X)$ such that $[U]=[V]$, there exists $n$ such that $U,V \in \mcA_n$ and $g \in G_{\mcA_n}$ such that $gU=V$.
\end{itemize}

To see why this construction can be carried out, fix a finite Boolean algebra $\mcA$. Fix also $U,V \in \Clopen(X)$ such that $[U]=[V]$. We need to build a finite Boolean algebra $\mcB$ refining $\mcA$, such that elements of $G_\mcA$ extend to elements of $G_\mcB$ and there exists $g \in G_\mcB$ such that $gU=V$.

Find a finite algebra $\mcA'$ refining $\mcA$, $U$, and $V$, then let $M$ be the finite subset of $T(\alpha)$ consisting of the types of atoms of $\mcA'$. Let $A, B$ be two atoms of $\mcA'$ such that $[A]=[B]$; denoting by $A_1,\ldots,A_n$ the atoms of $\mcA'$ contained in $A$, and $B_1,\ldots,B_m$ the atoms contained in $B$, we obtain the relation $\sum_{i=1}^n [A_i] = \sum_{j=1}^m [B_j]$ in $T(\alpha)$. Note that there are finitely many relations in $T(\alpha)$ that occur in such a way.

As pointed out in \cite{Ara_Goodearl2015}* {Theorem~3.14}, the assumptions on $T(\alpha)$ imply (for instance via the Effros--Handelman--Shen theorem) that it is the limit of an inductive sequence of free monoids $\N^{n_i}$. If we go far enough in this sequence, all equalities of types occuring as in the previous paragraph are witnessed; this means that there exists some integer $p$, as well as a map $\varphi \colon M \to \N^p$ and a morphism $\psi \colon \N^p \to T(\alpha)$ such that:
\begin{itemize}
\item $\psi \circ \varphi$ is the inclusion map from $M$ to $T(\alpha)$.
\item Whenever $A= \bigsqcup_{i=1}^n A_i$ and $B=\bigsqcup_{j=1}^m B_j$ are two elements of $\mcA'$ such that $[A]=[B]$ (with $A_i$, $B_j$ atoms of $\mcA'$) we have $\sum_{i=1}^n  \varphi([A_i])= \sum_{j=1}^m \varphi([B_j])$.
\end{itemize}
Let $e_1,\ldots,e_p$ denote the standard generators of $\N^p$. For each atom $A$ of $\mcA'$, we have a unique way to write $\varphi(A)= \sum_{i \in I_A} n_i e_i$. We can thus build a partition of $A$ made up of $n_i$ disjoint copies of $\psi(e_i)$ for all $i \in I_A$, i.e.~write 
\[A= \bigsqcup_{i \in I_A} \bigsqcup_{j=1}^{n_i} E_{i,j}(A) \]
with $[E_{i,j}(A)]=\psi(e_i)$ for all $i \in I_A$.

We thus obtain a clopen partition $\mcB$ of $X$, generated by (multiple) copies of $\psi(e_1),\ldots,\psi(e_n)$.

Assume that $A,B \in \mcA'$ are such that $[A]=[B]$. Then for each $i$ $A$ contains as many disjoint copies of $\psi(e_i)$ as $B$; by permuting these copies accordingly, we obtain that there exists an element $g$ of $G_\mcB$ such that $gA=B$. This proves both that elements of $G_{\mcA}$ extend to elements of $G_\mcB$, and that there exists $g \in G_\mcB$ such that $gU=V$.

This confirms that one can inductively perform the construction described at the beginning of the proof. By construction 
we have $\bigcup \mcA_n = \Clopen(X)$. Further, the inductive limit of the groups $G_{\mcA_n}$ induces an ample subgroup $\Lambda$ of $\Homeo(X)$ such that, for every $A,B \in \Clopen(X)$,
\[ \left( \exists \lambda \in \Lambda \ \lambda A = B \right) \Leftrightarrow \left( \exists g \in \llbracket \alpha \rrbracket \ g A = B \right)\]
We have not yet ensured that $\Lambda \subseteq \llbracket \alpha \rrbracket$; however, a variation on a construction of Krieger \cite{Krieger1980} (we postpone the proof to appendix \ref{a:Krieger}, since it is a mostly routine modification of Krieger's proof) implies that there exists $g \in \Homeo(X)$ such that $g \overline{\llbracket \alpha \rrbracket} g^{-1}= \overline{\llbracket \alpha \rrbracket}$ and $g \Lambda g^{-1} \subset \llbracket \alpha \rrbracket$. Then $g \Lambda g^{-1}$ is the desired dense ample subgroup of $\llbracket \alpha \rrbracket$.
\end{proof}

The next theorem sums up the previous two results.

\begin{theorem}\label{p:unperf_locally_finite_dense}
Assume that $\alpha$ is an action of a countable group $\Gamma$ on a compact, metrizable, $0$-dimensional space. Then $\llbracket \alpha \rrbracket$ admits a dense locally finite subgroup (even, a dense ample subgroup) iff $T(\alpha)$ is unperforated and cancellative.
\end{theorem}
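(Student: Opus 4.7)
This theorem is essentially a recapitulation of Lemma \ref{l:ample_is_unperforated} and Proposition \ref{p:tame_gives_ample}, so my plan is simply to stitch these two results together and verify that they yield the stated two-way equivalence (together with the parenthetical strengthening to ample subgroups).

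For the forward direction, suppose $\llbracket \alpha \rrbracket$ contains a dense locally finite subgroup $\Lambda$. I would invoke Lemma \ref{l:ample_is_unperforated} directly: its hypothesis asks only for a dense locally finite subgroup (not ample), and its conclusion is exactly that $T(\alpha)$ is unperforated and cancellative. The content there is the observation that a witness for $na \le nb$ or for $a + c = b + c$ in $T(\alpha)$ can, by density of $\Lambda$ in $\llbracket \alpha \rrbracket$, be refined so that the relevant pieces and group elements belong to some finite subgroup of $\Lambda$ acting on a finite clopen partition, reducing the problem to the elementary fact that the type semigroup of a finite group acting on a finite set is unperforated and cancellative by a counting/matching argument.

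For the reverse direction, assuming $T(\alpha)$ is unperforated and cancellative, I would apply Proposition \ref{p:tame_gives_ample} to obtain directly a dense \emph{ample} subgroup of $\llbracket \alpha \rrbracket$. Since ample subgroups are, by definition, countable and locally finite, this simultaneously delivers the weaker conclusion needed for the equivalence and establishes the stronger ``even, a dense ample subgroup'' clause promised in the statement.

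The conceptual content of the theorem thus lies entirely in the reverse direction, and the main obstacle is Proposition \ref{p:tame_gives_ample}. The two ingredients that do the work there are an Effros--Handelman--Shen-style approximation of $T(\alpha)$ by an inductive sequence of free monoids $\N^{n_i}$ (used to build, inductively, finite Boolean subalgebras $\mcA_n$ of $\Clopen(X)$ on which compatible finite groups $G_{\mcA_n}$ realize every $T(\alpha)$-equality of clopen types appearing so far), and a Krieger-type conjugation argument (deferred to Appendix \ref{a:Krieger}) ensuring that the resulting limit ample group, which \emph{a priori} only sits inside $\overline{\llbracket \alpha \rrbracket}$, can be conjugated inside $\llbracket \alpha \rrbracket$ itself. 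Past those two inputs, the rest of the argument is bookkeeping.
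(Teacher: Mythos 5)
Your proposal is correct and is exactly the paper's argument: the theorem is stated there as a summary of Lemma \ref{l:ample_is_unperforated} (forward direction) and Proposition \ref{p:tame_gives_ample} (reverse direction, which indeed yields the stronger ample conclusion since ample groups are by definition locally finite). Your summaries of the content of those two ingredients are also accurate.
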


If $\Gamma$ is amenable and $\alpha$ is minimal, we already mentioned that by a result of Ara and Pardo $T(\alpha)$ is cancellative as soon as it is stably finite and almost unperforated (since it then has the weak comparability property), so the assumption of cancellativity is redundant in that case.


Matui \cite{Matui_torsion} gave examples of free minimal Cantor $\Z^2$-actions for which the torsion part of the $0$-homology group $H(\alpha)$ is nontrivial (this contradicts a result of Forrest--Hunton \cite{Forrest_Hunton} whose proof had a gap). For these actions we have $T(\alpha)=H(\alpha)^+$ since $T(\alpha)$ is cancellative (see e.g. Proposition \ref{prop: link between weak comparability, almost unperforation and cancellation})
so torsion in $H(\alpha)$ means that $T(\alpha)$ is not unperforated. We thus obtain the following result.

\begin{theorem}\label{t:no_dense_ample_group}
There exists a free minimal Cantor action $\alpha$ of $\Z^2$ on the Cantor space such that $\llbracket \alpha \rrbracket$ does not have any dense, locally finite subgroup.
\end{theorem}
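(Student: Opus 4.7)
The plan is to assemble the pieces prepared in the previous sections and plug in Matui's torsion example. Start with a free minimal Cantor $\Z^2$-action $\alpha$ for which $H(\alpha)$ has a nonzero torsion element $(f)$; this is exactly what Matui constructs in \cite{Matui_torsion}. The goal is to deduce that $T(\alpha)$ fails to be unperforated, so that Theorem \ref{p:unperf_locally_finite_dense} forbids the existence of a dense locally finite subgroup of $\llbracket \alpha \rrbracket$.

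The first step is to show that $T(\alpha)$ is cancellative. Since $\Z^2$ is amenable of polynomial growth, the result of Downarowicz--Zhang (or Naryshkin) yields dynamical comparison of $\alpha$. Minimality then lets me apply Proposition \ref{p:minimal_dynamical_unperforated} to get that $T(\alpha)$ is almost unperforated. Since $\Z^2$ is amenable, $\mcM(\alpha) \ne \emptyset$, so $T(\alpha)$ is stably finite, and Proposition \ref{prop: link between weak comparability, almost unperforation and cancellation}(2) gives cancellativity. Cancellativity in turn implies, by the proposition in Section \ref{s:homology}, that the canonical map $\pi \colon T(\alpha) \to H(\alpha)^+$ is an isomorphism of monoids onto the positive cone of $H(\alpha)$.

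The second step is to convert torsion in $H(\alpha)$ into a failure of unperforation in $T(\alpha)$. Write the torsion element as $(f) = (f^+) - (f^-)$ with $f^+, f^- \in C(X,\N)$, and let $n \geq 1$ be such that $n(f) = 0$, i.e. $n(f^+) = n(f^-)$ in $H(\alpha)$. Via the isomorphism $\pi$ this gives $n[f^+] = n[f^-]$ in $T(\alpha)$, so in particular $n[f^+] \le n[f^-]$ and $n[f^-] \le n[f^+]$. If $T(\alpha)$ were unperforated, I would conclude $[f^+] \le [f^-]$ and $[f^-] \le [f^+]$; combined with stable finiteness (implied by cancellativity) this would force $[f^+] = [f^-]$, and then applying $\pi$ would give $(f) = 0$, contradicting the choice of $f$. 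Hence $T(\alpha)$ is not unperforated.

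The third step is just invoking Theorem \ref{p:unperf_locally_finite_dense}: since $T(\alpha)$ is not unperforated, $\llbracket \alpha \rrbracket$ cannot contain a dense locally finite subgroup, which is the desired conclusion. The only nontrivial external input is Matui's construction of a free minimal Cantor $\Z^2$-action with nonzero torsion in $H(\alpha)$; the rest is essentially bookkeeping within the framework built in the earlier sections, and the main conceptual point is the translation \emph{torsion in $H(\alpha)$ $\Rightarrow$ perforation in $T(\alpha) = H(\alpha)^+$}, which is only legitimate because the amenability and polynomial growth of $\Z^2$ give us cancellativity for free.
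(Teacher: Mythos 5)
Your proposal is correct and follows exactly the route the paper takes: Matui's torsion example, cancellativity of $T(\alpha)$ via dynamical comparison (Downarowicz--Zhang) plus Propositions \ref{p:minimal_dynamical_unperforated} and \ref{prop: link between weak comparability, almost unperforation and cancellation}, the identification $T(\alpha)\cong H(\alpha)^+$, and then Theorem \ref{p:unperf_locally_finite_dense}. You have simply spelled out the translation of torsion into perforation in more detail than the paper's one-sentence version, and that translation is carried out correctly.
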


\section{Dynamical Comparison in the Stone-\v{C}ech compactification and in the universal minimal flow}\label{s:Stone_Cech}
We again let $\Gamma$ be a countable discrete group, and denote by $\beta \Gamma$ its Stone-\v{C}ech compactification; it is a compact, Hausdorff, $0$-dimensional space (and very much non-metrizable unless $\Gamma$ is finite). 

The clopen subsets of $\beta \Gamma$ correspond to subsets of $\Gamma$ (identifying $\beta \Gamma$ with the set of all ultrafilters on $\Gamma$, clopen subsets of $\beta \Gamma$ are of the form $\{\mathcal U \colon A \in \mathcal U\}$ for $A \subseteq \Gamma$). Thus the action of $\Gamma$ on $\beta \Gamma$ corresponds to the well-studied action of $\Gamma$ on its subsets via translation $\gamma \cdot A = \{\gamma x \colon x \in A\}$, and equidecomposability of clopen subsets in $\beta \Gamma$ is the same as classical equidecomposability of subsets in $\Gamma$.

One should note here that, for instance, $T(\Z \actson \beta \Z)$ is not stably finite, since $[\{0\}] + [\beta \Z] = [\beta \Z]$ (as witnessed by extending to $\beta \Z$ the map sending each $n \ge 0$ to $n+1$, and each $n < 0$ to $n$).
Actually, an inequality of the form $2a \le a$, with $a$ nonzero, is possible in $T(\Gamma \actson \beta \Gamma)$ even for a countable amenable $\Gamma$. Indeed, there exists such an $a$ iff $\Gamma$ is not supramenable; and there exist some countable, solvable, not supramenable groups. This kind of example is not restricted to non-metrizable spaces: using arguments similar to those in the proof of Proposition \ref{p:metrizable_factor} below, it follows that there exist free topologically transitive Cantor actions of some solvable groups with a nonempty clopen set $A$ such that $2[A] \le [A]$. 

Still, classical results show that type semigroups of Stone-\v{C}ech compactifications are rather well-behaved.

\begin{theorem}[\cite{Wagon2016}*{Theorems~3.6 and~10.20}]
The semigroup $T(\Gamma \actson \beta \Gamma)$ is unperforated and $\le$ is a partial order on $T(\Gamma \actson \beta \Gamma)$.
\end{theorem}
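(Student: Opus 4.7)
The plan is to translate the problem into the classical theory of paradoxical decompositions via the canonical bijection between $\Clopen(\beta\Gamma)$ and $\mathcal{P}(\Gamma)$: a subset $A \subseteq \Gamma$ corresponds to the clopen set $\{\mathcal U \in \beta \Gamma \colon A \in \mathcal U\}$. Under this identification, bounded clopen subsets of $\beta\Gamma \times \N$ are just finite sequences of subsets of $\Gamma$, and $\tilde\Gamma$-equidecomposability becomes the classical equidecomposability of subsets of $\Gamma \times \N$ under $\Gamma \times \mathfrak S$, where $\Gamma$ translates the first coordinate and $\mathfrak S$ permutes the second. Both assertions then reduce to standard statements about equidecomposition of subsets of a group acting on itself, which I would sketch next.

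For antisymmetry of $\le$, I would reprove the Banach--Schr\"oder--Bernstein theorem for piecewise translations. Given $a \le b$ and $b \le a$, pick representatives $A, B$ and piecewise injections $f \colon A \to B$, $g \colon B \to A$, each decomposed into finitely many pieces with associated elements of $\tilde\Gamma$. The classical Cantor--Schr\"oder--Bernstein fixed-point construction then applies verbatim: set $A_0 = A \setminus g(B)$, $A_{n+1} = g \circ f(A_n)$, $A_\infty = \bigcup_{n \ge 0} A_n$, and define a bijection equal to $f$ on $A_\infty$ and to $g^{-1}$ off $A_\infty$. This bijection uses only the finitely many elements of $\tilde\Gamma$ already appearing in $f$ and $g$, so it witnesses $a = b$. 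No topological subtlety arises because every subset of $\Gamma$ is automatically clopen in $\beta\Gamma$.

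For unperforation, the core is a K\"onig/Hall matching argument. Suppose $na \le nb$ is witnessed by a piecewise injection $T$ from $n$ disjoint copies of a representative of $a$ into $n$ disjoint copies of a representative of $b$, using a finite set $F \subset \tilde\Gamma$ of pieces. I would build a bipartite multigraph whose left vertices are the points of the representative of $a$, right vertices are the points of the representative of $b$, and whose edges, labelled by elements of $F$, record the $n$-to-$n$ correspondence induced by $T$: there is an edge labelled by $\tilde\gamma \in F$ between $x$ and $y$ whenever some copy of $x$ is sent to some copy of $y$ by the $\tilde\gamma$-piece of $T$. Every left vertex then has $F$-label-counted degree exactly $n$, and every right vertex has degree at most $n$, so by K\"onig's theorem for locally finite bipartite graphs (the infinite version of the fact that any $n$-regular bipartite graph admits a perfect matching, proved by compactness) there is a left-perfect matching. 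Partitioning the left vertices according to the $F$-label of their matched edge produces a partition of a representative of $a$ together with elements of $\tilde\Gamma$ witnessing $a \le b$. The main obstacle is ensuring the matching is piecewise-compatible with the $\tilde\Gamma$-action; this is where the $\mathfrak S$-factor plays its decisive role, since freely permuting the $n$ copies after the matching has been chosen is what allows an abstract matching to be translated back into an equidecomposition using only finitely many group elements.
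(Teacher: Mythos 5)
Your proposal is correct and follows essentially the same route as the paper, which gives no argument of its own but simply cites Wagon's Theorems~3.6 and~10.20 --- precisely the Banach--Schr\"oder--Bernstein theorem and the division theorem via the K\"onig/Hall matching argument that you reconstruct after identifying clopen subsets of $\beta\Gamma$ with arbitrary subsets of $\Gamma$. The one slightly loose point is your closing remark: the matching is automatically ``piecewise'' because it is a sub(multi)set of an edge set labelled by the finite set $F \subset \tilde\Gamma$, so it partitions by label into finitely many translation pieces; the $\mathfrak S$-factor is only needed to adjust the $\N$-coordinate so that each piece lands in the chosen single copy of the representative of $b$, which does not affect correctness.
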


Note that when $\Gamma$ is not amenable there is no invariant Radon probability measure on $\beta \Gamma$ and Lemma \ref{l:noinvmeasure} then implies that this action does not have dynamical comparison.

Since clopen sets in $\beta \Gamma$ correspond to subsets of $\Gamma$, states on $T(\Gamma \actson \beta \Gamma)$ correspond to finitely additive $\Gamma$-invariant probability measures on $\Gamma$. 
The following result is an immediate consequence of this, along with our earlier observations. 
\begin{theorem}\label{t:Tarski}
Let $\Gamma$ be a countable amenable group. Then $\Gamma \actson \beta \Gamma$ has dynamical comparison. 

This implies the following fact: let $A,B$ be two subsets of $\Gamma$ such that $\mu(A)< \mu(B)$ for any $\Gamma$-invariant f.a.p.m on $\Gamma$. Then there exist $A_1,\ldots,A_n \subset \Gamma$ and $\gamma_1,\ldots,\gamma_n \in \Gamma$ such that $A=\bigsqcup_{i=1}^n A_i$, $\bigsqcup_{i=1}^n \gamma_i A_i \subset B$.
\end{theorem}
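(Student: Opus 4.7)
The dynamical comparison assertion is obtained by chaining two facts already set up earlier in the excerpt. First, the semigroup $T(\Gamma \curvearrowright \beta \Gamma)$ is unperforated by the classical theorem quoted just above the statement, and unperforation immediately implies almost unperforation: given $(n+1)a \le nb$ with $n \ge 1$, one has $(n+1)a \le nb \le (n+1)b$, whence $a \le b$ by unperforation (the case $n=0$ being trivial). Second, since $\Gamma$ is amenable, Lemma~\ref{l:dyn_comp_unperf_below_order_unit} characterizes dynamical comparison as the requirement that $a \le b$ whenever $b$ is an order unit and $(n+1)a \le nb$ for some $n$, and almost unperforation supplies this implication unconditionally. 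This yields dynamical comparison for $\Gamma \curvearrowright \beta \Gamma$.

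For the concrete consequence about subsets of $\Gamma$, I would identify a subset $A \subseteq \Gamma$ with the clopen subset $\hat A = \{\mathcal U \in \beta \Gamma \colon A \in \mathcal U\}$; under this identification the translation action on $\mathcal P(\Gamma)$ matches $\Gamma \curvearrowright \beta \Gamma$, and $\Gamma$-invariant finitely additive probability measures on $\Gamma$ correspond exactly to elements of $\mcM(\Gamma \curvearrowright \beta \Gamma)$, as discussed after the definition of states. The hypothesis $\mu(A) < \mu(B)$ for every invariant f.a.p.m thus becomes $\mu(\hat A) < \mu(\hat B)$ for every $\mu \in \mcM(\Gamma \curvearrowright \beta \Gamma)$, so dynamical comparison (in the form of Proposition~\ref{p:stable_comparison}) gives $[\hat A] \le [\hat B]$ in the clopen type semigroup. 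Unwinding the definition of $\le$ and translating back from clopens of $\beta \Gamma$ to subsets of $\Gamma$ produces the desired partition $A = \bigsqcup_{i=1}^n A_i$ together with elements $\gamma_i \in \Gamma$ satisfying $\bigsqcup_{i=1}^n \gamma_i A_i \subseteq B$.

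There is essentially no new obstacle of independent interest in this argument: the deep input is the classical unperforation of $T(\Gamma \curvearrowright \beta \Gamma)$ (proved by a Tarski-type matching argument, see \cite{Wagon2016}), and everything else is a direct application of the machinery already assembled. The only bookkeeping that merits care is that the abstract inequality $[\hat A] \le [\hat B]$ has to be translated into a concrete equidecomposition using only elements of $\Gamma$ rather than the enlarged group $\tilde \Gamma = \Gamma \times \mathfrak S$; this is routine since $\hat A$ and $\hat B$ sit at a single level of $\beta \Gamma \times \N$, and the witnessing complement $c$ in $[\hat A] + c = [\hat B]$ can be chosen at that same level, so the permutation component of $\tilde \Gamma$ drops out.
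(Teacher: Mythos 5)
Your argument is correct and follows the paper's own route: unperforation of $T(\Gamma \actson \beta\Gamma)$ (the classical Tarski-type result quoted just before the theorem) combined with Lemma \ref{l:dyn_comp_unperf_below_order_unit} for amenable $\Gamma$, and then the translation of $[\hat A]\le[\hat B]$ back into a concrete equidecomposition of subsets of $\Gamma$. You merely make explicit two steps the paper leaves implicit (that unperforation implies almost unperforation, and that the permutation component of $\tilde\Gamma$ drops out when both sets live at a single level of $X\times\N$), both of which you handle correctly.
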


\begin{proof}
We know that $T(\Gamma \actson \beta \Gamma)$ is unperforated, whence it has dynamical comparison since $\Gamma$ is amenable (see Lemma \ref{l:dyn_comp_unperf_below_order_unit}). 

The second part of the statement is just a reformulation of dynamical comparison without the semigroup terminology, since $\Gamma$-invariant finitely additive probability measures on $\Gamma$ correspond to $\Gamma$-invariant Radon measures on $\beta \Gamma$.
\end{proof}


The previous result can also be derived from the finitileability theorem of Downarowicz, Huczek and Zhang \cite{Downarowicz_Huczek_Zhang2015} via a standard compactness argument. The proof presented here is much more elementary.

It seems worth noting that, as explained to the author by B. Weiss, there exists $A \subset \Z$ such that $\mu(A)=\frac{1}{2}$ for every $\mu \in \mcM(\Z \actson \beta \Z)$, yet $A$ and $\Z \setminus A$ are not equidecomposable. To see this, write $\Z$ as a disjoint union of arithmetic sequences, e.g.
\[\Z=\{2n \colon n \in \Z\}\sqcup\{4n+1 \colon n \in \Z\}  \sqcup \{8n+3 \colon n\in \Z\} \ldots\] Then let $A$ consist of every other element from each of these sequences, i.e.~
\[A=\{4n \colon n \in \Z\} \sqcup\{8n + 1 \colon n \in \Z\} \sqcup \{16n+3 \colon n \in \Z\} \ldots\]
Clearly for every $\mu \in \mcM(\Z \actson \beta \Z)$ we have $\mu(A) \ge \frac{1}{4} + \frac{1}{8} + \frac{1}{16} + \ldots = \frac{1}{2}$; the same holds for $B=\N \setminus A$ and it follows that $\mu(B)=\mu(A)=\frac{1}{2}$ for every invariant $\mu$. Yet one can check that infinitely many shifts are needed to map $A$ into $B$, so $A$ and $B$ are not equidecomposable.

Next, we consider the universal minimal flow of $\Gamma$, which we denote $\Gamma \actson \mu \Gamma$. We recall that $\mu \Gamma$ is a compact, Hausdorff, $0$-dimensional space and that $\Gamma \actson \mu \Gamma$ is free. Further, any minimal subset for the action $\Gamma \actson \beta \Gamma$ is isomorphic to $\mu \Gamma$; we identify $\mu \Gamma$ with some fixed minimal subset of $\beta \Gamma$, and choose a $\Gamma$-equivariant retraction $r \colon \beta \Gamma \to \mu \Gamma$. We refer the reader to \cite{Pestov} and its bibliography for more details on this area of topological dynamics. 

The retraction $r$ induces a homomorphism 
\[ \varphi \colon (T(\Gamma \actson \mu \Gamma),+) \to (T(\Gamma \actson \beta \Gamma),+) \] defined by setting $\varphi([A]) =[r^{-1}(A)]$ for $A \in \Clopen(\mu \Gamma)$, and then extending to $T(\Gamma \actson \mu \Gamma)$.

In the other direction, one can define $\psi \colon T(\Gamma \actson \beta \Gamma) \to T(\Gamma \actson \mu \Gamma)$ by setting $\psi([A]) =[A \cap \mu \Gamma]$ for $A \in \Clopen (\beta \Gamma)$, and extending to $T(\Gamma \actson \beta \Gamma)$ .

Since $r$ is a retraction we have $\psi (\varphi(a))=a$ for all $a \in T(\Gamma \actson \mu \Gamma)$. Hence $T(\Gamma \actson \mu \Gamma)$ is isomorphic to a subsemigroup of $T(\Gamma \actson \beta \Gamma)$.

The following result is an immediate consequence of this and Theorem \ref{t:Tarski}.

\begin{theorem}\label{t:minimal_flow_comparison} Let $\Gamma$ be a countable group. The semigroup
$T(\Gamma \actson \mu \Gamma)$ is unperforated, and $\le$ is a partial order on it. 

In particular, the action $\Gamma \actson \mu \Gamma$ has dynamical comparison.

\end{theorem}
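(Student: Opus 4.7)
The plan is to transfer the algebraic properties of $T(\Gamma \actson \beta \Gamma)$, already established just above to be unperforated with antisymmetric algebraic ordering, down to $T(\Gamma \actson \mu \Gamma)$ via the embedding provided by the maps $\varphi$ and $\psi$ introduced right before the theorem; then deduce dynamical comparison from unperforation together with minimality, using Proposition \ref{p:minimal_dynamical_unperforated}.

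The first step is to verify that $\varphi$ is an order-embedding: for $a,b \in T(\Gamma \actson \mu\Gamma)$ one has $a \le b$ iff $\varphi(a) \le \varphi(b)$. The forward implication is immediate because $\varphi$ is a monoid homomorphism; the converse follows by applying the monoid homomorphism $\psi$ and using the identity $\psi \circ \varphi = \mathrm{id}$ already noted in the excerpt. Once this is in place, the first two statements of the theorem follow mechanically. If $na \le nb$ in $T(\Gamma \actson \mu \Gamma)$, then $n\varphi(a) \le n\varphi(b)$ in $T(\Gamma \actson \beta \Gamma)$, so unperforation there gives $\varphi(a) \le \varphi(b)$, hence $a \le b$; this proves $T(\Gamma \actson \mu \Gamma)$ is unperforated. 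Similarly, $a \le b$ and $b \le a$ imply $\varphi(a) = \varphi(b)$ (since $\le$ is a partial order on $T(\Gamma \actson \beta\Gamma)$), and then $a = \psi(\varphi(a)) = \psi(\varphi(b)) = b$, giving antisymmetry.

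For the dynamical comparison assertion, I would observe that $\Gamma \actson \mu \Gamma$ is minimal by definition of the universal minimal flow, and that unperforation immediately implies almost unperforation: if $(n+1)a \le nb$ then $(n+1)a \le (n+1)b$, so unperforation yields $a \le b$. Proposition \ref{p:minimal_dynamical_unperforated} then produces dynamical comparison. Note that no amenability assumption is needed at any step, because unperforation of $T(\Gamma \actson \beta \Gamma)$ is unconditional (Tarski, via \cite{Wagon2016}) — this is what allows the statement to hold for arbitrary countable $\Gamma$, in contrast to Theorem \ref{t:Tarski}.

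The main obstacle is essentially absent here: the conceptual work was done just above the theorem when setting up the retraction $r \colon \beta\Gamma \to \mu\Gamma$ and extracting from it the pair $(\varphi,\psi)$ with $\psi \circ \varphi = \mathrm{id}$. The only delicate point worth pausing on is checking that $\le$ is reflected (not merely preserved) by $\varphi$; once that is recorded, everything else is a short chase through the three bullet points.
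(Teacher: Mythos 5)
Your proposal is correct and follows exactly the route the paper intends: the retraction pair $(\varphi,\psi)$ with $\psi\circ\varphi=\mathrm{id}$ makes $\varphi$ an order-embedding, so unperforation and antisymmetry descend from $T(\Gamma \actson \beta\Gamma)$ (which holds unconditionally by the Tarski/Wagon results, no amenability needed), and dynamical comparison then follows from minimality via Proposition \ref{p:minimal_dynamical_unperforated}. You have merely spelled out the order-reflection step that the paper leaves implicit in calling the theorem an ``immediate consequence.''
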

(Recall that by Proposition \ref{p:minimal_dynamical_unperforated} if $\alpha$ is minimal and $T(\alpha)$ is almost unperforated then $\alpha$ has the dynamical comparison property)
 
If $\Gamma$ is not amenable, this (along with $\le$ being a partial order) implies that $a=b$ for every nonzero $a,b \in T(\mu \Gamma)$. In particular any two nonempty clopen subsets of $\mu \Gamma$ are equidecomposable.

In the amenable case, dynamical comparison implies that $T(\Gamma \actson\mu \Gamma)$ is cancellative.
Amusingly, we seem to have found a new condition to add to the long list of characterizations of amenability for countable groups: $\Gamma$ is amenable iff $T(\Gamma \actson\mu \Gamma)$ is stably finite iff $T(\Gamma \actson\mu \Gamma)$ is cancellative.

The following proposition is a routine consequence of the aforementioned properties of $\mu \Gamma$.

\begin{prop}\label{p:metrizable_factor}
Any minimal Cantor action of $\Gamma$ is a factor of a free, minimal Cantor action whose clopen type semigroup is unperforated and partially ordered (in particular, this action has the dynamical comparison property and if $\Gamma$ is amenable its clopen type semigroup is cancellative).
\end{prop}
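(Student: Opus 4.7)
The plan is to realize the desired extension as a suitable metrizable minimal factor of the universal minimal flow $\mu\Gamma$. Let $\alpha \colon \Gamma \actson X$ be a minimal Cantor action. By the universal property of $\mu\Gamma$ there is a $\Gamma$-equivariant continuous surjection $\pi \colon \mu\Gamma \to X$, and by Theorem \ref{t:minimal_flow_comparison} the action $\Gamma \actson \mu\Gamma$ is free with $T(\Gamma \actson \mu\Gamma)$ unperforated and partially ordered. I would construct, by a L\"owenheim--Skolem style induction, a countable $\Gamma$-invariant Boolean subalgebra $\mathcal F \subseteq \Clopen(\mu\Gamma)$, and then take $Y = \mathrm{Stone}(\mathcal F)$ with the induced $\Gamma$-action $\beta$. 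Because $\mathcal F$ is countable, $Y$ is compact, metrizable, and $0$-dimensional; it will be infinite and perfect (hence a Cantor space) because it factors onto $X$ and is minimal as a factor of $\mu\Gamma$.

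Three requirements drive the construction of $\mathcal F$. First, $\pi^{-1}(\Clopen(X)) \subseteq \mathcal F$, so that $\beta$ factors onto $\alpha$. Second, for each $\gamma \in \Gamma \setminus \{e\}$, a finite clopen partition $\mathcal P_\gamma$ of $\mu\Gamma$ with $P \cap \gamma P = \emptyset$ for every $P \in \mathcal P_\gamma$ (which exists by compactness and freeness of $\Gamma \actson \mu\Gamma$) is included in $\mathcal F$; this will force $\beta$ to be free. Third, for every pair of bounded clopen subsets $A,B$ of $\mu\Gamma \times \N$ built from sets in $\mathcal F$ and every relation $[A] \leq [B]$ or $[A]=[B]$ holding in $T(\Gamma \actson \mu\Gamma)$, the pieces of a chosen witnessing decomposition are added to $\mathcal F$. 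At each stage one closes under finite Boolean operations and the $\Gamma$-action; as only countably many pairs and witnesses appear at each stage, the construction terminates with a countable $\mathcal F$ after $\omega$ stages.

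The third requirement makes the natural monoid homomorphism $\varphi \colon T(\beta) \to T(\Gamma \actson \mu\Gamma)$ both injective and order-reflecting. Unperforation transfers: if $na \leq nb$ in $T(\beta)$, then $n\varphi(a) \leq n\varphi(b)$ in $T(\Gamma \actson \mu\Gamma)$, so $\varphi(a) \leq \varphi(b)$ there, and hence $a \leq b$ by order-reflection. Antisymmetry transfers similarly: if $a \leq b \leq a$ in $T(\beta)$ then $\varphi(a) = \varphi(b)$ by antisymmetry of $T(\Gamma \actson \mu\Gamma)$, whence $a=b$ by injectivity of $\varphi$. Dynamical comparison and, in the amenable case, cancellativity will then follow from Proposition \ref{p:minimal_dynamical_unperforated} and Proposition \ref{prop: link between weak comparability, almost unperforation and cancellation} respectively.

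The main obstacle is the bookkeeping for the inductive construction: one must enumerate, at each stage, all finite sequences of sets already in the current $\mathcal F_n$, fix $T(\Gamma \actson \mu\Gamma)$-witnesses for each relation between the corresponding bounded clopens in $\mu\Gamma \times \N$, and add all the resulting pieces while keeping the algebra countable and closing under Boolean operations and the $\Gamma$-action. This is routine but must be spelled out with care to make sure every relevant witness is eventually incorporated.
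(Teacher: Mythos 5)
Your proposal is correct and follows essentially the same route as the paper: pass to the universal minimal flow, whose clopen type semigroup is unperforated and partially ordered, and extract a countable $\Gamma$-invariant subalgebra containing $\pi^{-1}(\Clopen(X))$ and the freeness partitions, closed under witnesses of the relevant relations by a L\"owenheim--Skolem induction, then take its Stone space. The only (harmless) variation is that you close under witnesses of \emph{all} inequalities and equalities, making the comparison map order-reflecting and injective, whereas the paper only adds witnesses for the specific premises $ku\le kv$ and $u\le v\le u$; both yield the transfer of unperforation and antisymmetry.
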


\begin{proof}
Given a minimal Cantor action $\alpha \colon \Gamma \actson X$, find a factor map $\pi \colon \mu\Gamma \to X$. 

Let $\mcA = \pi^{-1}(\Clopen(X))$, which is a countable, $\Gamma$-invariant Boolean subalgebra of $\Clopen(\mu \Gamma)$. 

Given a $\Gamma$-invariant Boolean subalgebra $\mcB$ of $\Clopen(\mu \Gamma)$, we denote by $T_\mcB(\Gamma \actson \mu \Gamma)$ the semigroup of types obtained by considering only decompositions with pieces in $\mcB$ (still with $\Gamma$ as the acting group), and $\le_\mcB$ the corresponding partial ordering. We also denote equality of types for this relation by $=_\mcB$.

We build a sequence of countable atomless subalgebras $(\mcA_n)_n$ of $\Clopen(X)$ as follows. First, we let $\mcA_0$ be any countable, $\Gamma$-invariant, atomless Boolean subalgebra of $\Clopen(\mu \Gamma)$ containing $\mcA$ and such that, for any $\gamma \in \Gamma \setminus \{1\}$, there exists a clopen partition $(U_i)_{i \in I}$ of $\mu \Gamma$ whose elements belong to $\mcA_0$ and are such that $\gamma U_i \cap U_i = \emptyset$ for all $i \in I$ (if necessary, see the proof of \ref{p:free_G_delta} below for an explanation of why such clopen partitions exist).

Next, assume that $\mcA_n$ has been built; for any $u,v \in T_{\mcA_n}(\Gamma \actson \mu \Gamma)$ and any $k \in \N^*$ such that $k u \le_{\mcA_n} k v$, find clopen sets forming decompositions of $u,v$ witnessing that $u=v$ in $T(\Gamma \actson \mu \Gamma)$; and similarly if $u \le_{\mcA_n} v \wedge v \le _{\mcA_n} u$ find clopen decompositions witnessing that $u=v$. Take any $\Gamma$-invariant, countable, atomless Boolean subalgebra $\mcA_{n+1}$ that contains $\mcA_n$ and all these new clopen sets and denote by $\pi_n$ the natural map from $T_{\mcA_n}(\Gamma \actson \mu \Gamma)$ to $T_{\mcA_{n+1}}(\Gamma \actson \mu \Gamma)$ (if two clopen sets are equidecomposable with pieces in $\mcA_n$ then they are also equidecomposable with pieces in $\mcA_{n+1}$). We have:

\begin{itemize}
    \item $ \forall u,v \in T_{\mcA_n}(\Gamma \actson \mu \Gamma) \ \forall k \in \N^* \quad k u \le_{\mcA_n} kv \Rightarrow \pi_n(u) \le_{\mcA_{n+1}} \pi_n(v)$. 
    \item $\forall u,v \in T_{\mcA_n}(\Gamma \actson \mu \Gamma) \quad (u \le_{\mcA_n}  v \wedge v \le_{\mcA_n} u) \Rightarrow \pi_n(u)=_{\mcA_{n+1}} \pi_n(v)$. 
\end{itemize}

Then $\mcB = \bigcup_n \mcA_n$ is countable, atomless, $\Gamma$-invariant, contains $\mcA$, and satisfies the following conditions:
\begin{itemize}
    \item For any $u,v \in T_\mcB(\Gamma \actson \mu \Gamma)$, and any $n \in \N^*$, $n u \le_\mcB nv \Rightarrow u \le_{\mcB} v$. 
    \item For any $u,v \in T_\mcB(\Gamma \actson \mu \Gamma)$, $(u \le_\mcB  v \wedge v \le_\mcB u) \Rightarrow u=_\mcB v$. 
    \item For any $\gamma \in \Gamma \setminus \{1\}$, there exists a clopen partition $(U_i)_{i \in I}$ of $\mu \Gamma$ whose elements belong to $\mcB$ and are such that $\gamma U_i \cap U_i = \emptyset$ for all $i \in I$.
\end{itemize}
Denote by $Y$ the compactification of $\Gamma$ associated with $\mcB$, i.e.~ the compactification whose algebra of continuous functions is the closed subalgebra of $C(\mu \Gamma)$ whose $\{0,1\}$-valued functions are the indicator functions of elements of $\mcB$. 
Then $Y$ is a Cantor space, and we have a free action $\Gamma \actson Y$ whose clopen type semigroup is unperforated and partially ordered. Since $\Clopen(\mu \Gamma) \supset \mcB \supseteq \mcA$, $\pi$ factors through a $\Gamma$-equivariant map $\psi \colon Y \to X$, and we are done.
\end{proof}


\section{Spaces of actions}
We fix a Cantor space $X$, an infinite countable group $\Gamma$ and consider the space $A(\Gamma)$ of actions of $\Gamma$ on $X$, which we see as a closed subset of $\Homeo(X)^\Gamma$, hence a Polish space.

\subsection{\texorpdfstring{Some $G_\delta$ subsets of $A(\Gamma)$}{Some Gdelta subsets of A(Gamma)}}

The following is well-known.
\begin{prop}
The space $\Min(\Gamma)$ of all minimal Cantor actions of $\Gamma$ is a $G_\delta$ subset of $A(\Gamma)$.
\end{prop}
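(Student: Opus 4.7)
The plan is to express minimality as a countable intersection of open conditions, using a countable clopen basis of $X$ and the compactness of $X$.

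First I would fix a countable basis $(U_n)_{n \in \N}$ of nonempty clopen subsets of $X$, which exists since $X$ is a Cantor space. Recall that $\alpha \in A(\Gamma)$ is minimal iff every $\alpha$-orbit is dense in $X$, which in turn is equivalent to $\bigcup_{\gamma \in \Gamma} \alpha(\gamma)(U_n) = X$ for every $n$. Using that $X$ is compact and that each $\alpha(\gamma)(U_n)$ is open, this last condition is equivalent to the existence, for each $n$, of a finite set $F \subseteq \Gamma$ such that $\bigcup_{\gamma \in F} \alpha(\gamma)(U_n) = X$. Thus
\[ \Min(\Gamma) = \bigcap_{n \in \N} \bigcup_{F \subseteq \Gamma \text{ finite}} O_{n,F}, \quad \text{where } O_{n,F} = \{\alpha \in A(\Gamma) : \bigcup_{\gamma \in F} \alpha(\gamma)(U_n) = X\}. \]

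The second step is to check that each set $O_{n,F}$ is open in $A(\Gamma)$. This follows from the fact that, for any clopen $U \subseteq X$, the evaluation map $h \mapsto h(U)$ from $\Homeo(X)$ to $\Clopen(X)$ is locally constant (a basis of neighborhoods of $h$ in $\Homeo(X)$ is given by $\{h' : h'(A) = h(A) \text{ for all } A \in \mcA\}$ as $\mcA$ ranges over finite clopen partitions refining $\{h(U), X \setminus h(U)\}$). Consequently, for $\alpha \in O_{n,F}$ there is a neighborhood of $\alpha$ in $\Homeo(X)^\Gamma$ on which $\alpha'(\gamma)(U_n) = \alpha(\gamma)(U_n)$ for all $\gamma \in F$ simultaneously (only finitely many coordinates are involved), so $\bigcup_{\gamma \in F} \alpha'(\gamma)(U_n) = X$ still holds.

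Combining these two observations, $\Min(\Gamma)$ is a countable intersection of open sets in $A(\Gamma)$, hence a $G_\delta$ subset. There is no real obstacle here: the only minor point to be careful about is making sure that the finite-union condition genuinely captures minimality, which is handled by compactness of $X$ and the fact that a countable clopen basis is enough to detect density of orbits.
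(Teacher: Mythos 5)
Your proof is correct and follows essentially the same route as the paper's: express minimality as the countable conjunction, over (a countable family of) nonempty clopen sets, of the open condition that finitely many translates cover $X$. The extra details you supply (compactness to reduce to finite unions, local constancy of $h \mapsto h(U)$ to verify openness) are exactly what the paper leaves implicit.
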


\begin{proof}
An action $\alpha$ is minimal iff for every nonempty clopen $A$ there exist $\gamma_1,\ldots,\gamma_n$ such that $\bigcup_i \alpha(\gamma_i) A=X$. For fixed $A$, this is an open condition on $\alpha$.
\end{proof}

Hence $\Min(\Gamma)$ is a Polish space in its own right.

\begin{prop}\label{p:free_G_delta}
The space $\Free(\Gamma)$ of all free actions is a $G_\delta$ subset of $A(\Gamma)$.
\end{prop}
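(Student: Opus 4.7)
The plan is to express $\Free(\Gamma)$ as a countable intersection of open subsets of $A(\Gamma)$. Since $\alpha$ is free iff $\alpha(\gamma)$ has no fixed point for every $\gamma \in \Gamma \setminus \{1\}$, and $\Gamma$ is countable, it is enough to show that for each fixed $\gamma \ne 1$ the set
\[ F_\gamma := \{\alpha \in A(\Gamma) : \alpha(\gamma) \text{ is fixed-point free}\} \]
is open in $A(\Gamma)$.

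The main step is the combinatorial reformulation: a self-homeomorphism $g$ of $X$ is fixed-point free if, and only if, there exists a clopen partition $X = \bigsqcup_{i=1}^n U_i$ with $gU_i \cap U_i = \emptyset$ for all $i$. One direction is immediate. For the converse I would, for each $x \in X$, use Hausdorffness to separate $x$ from $g(x)$ by disjoint open sets, then use $0$-dimensionality and continuity of $g$ to shrink to a clopen neighborhood $U_x$ of $x$ with $gU_x \cap U_x = \emptyset$; compactness then yields a finite subcover of $X$, which I would refine to the desired clopen partition (this refinement argument is essentially the one already used when constructing $\mcA_0$ in the proof of Proposition \ref{p:metrizable_factor}).

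Finally, for any fixed clopen partition $(U_i)_{i=1}^n$ of $X$, the set of $\alpha \in A(\Gamma)$ satisfying $\alpha(\gamma) U_i \cap U_i = \emptyset$ for every $i$ is clopen in $A(\Gamma)$: the basic neighborhoods of a homeomorphism in the Polish topology on $\Homeo(X)$ prescribe its action on any given clopen partition, so $\beta \mapsto \beta(\gamma) U_i$ is locally constant with $\Clopen(X)$ viewed as discrete. Taking the union of these clopen conditions over all clopen partitions of $X$ then shows that $F_\gamma$ is open, whence $\Free(\Gamma) = \bigcap_{\gamma \ne 1} F_\gamma$ is $G_\delta$.

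I do not anticipate any serious obstacle; the only point requiring a little care is the refinement of a finite open cover witnessing fixed-point-freeness into a clopen partition, but this is routine for homeomorphisms of a compact $0$-dimensional Hausdorff space.
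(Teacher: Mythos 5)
Your proposal is correct and follows essentially the same route as the paper: reduce to fixed-point-freeness of each $\alpha(\gamma)$, characterize it by the existence of a clopen cover/partition $(U_i)$ with $\alpha(\gamma)U_i \cap U_i = \emptyset$ (via a local clopen separation plus compactness), and observe that for each fixed partition this is an open condition on $\alpha$, so that $\Free(\Gamma)$ is a countable intersection of open sets.
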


\begin{proof}
Assume that $\alpha$ is a free action, and fix $\gamma \in \Gamma \setminus \{1\}$. Then for every $x \in X$ there exists a clopen $U$ containing $x$ and such that $\alpha(\gamma) U \cap U = \emptyset$. Hence one can find finitely many clopen subsets $A_1,\ldots,A_n$ such that $\bigcup_i A_i=X$ and $\alpha(\gamma) A_i \cap A_i = \emptyset$ for all $i$. 

Conversely, any $\alpha$ satisfying the condition in the previous paragraph is free. Thus 
\[\Free(\Gamma)= \bigcap_{\gamma \in \Gamma \setminus\{1\}} \bigcup_{\mcA} \{\alpha \colon \forall A \in \mcA \ \alpha(\gamma) A \cap A=\emptyset \}\]
(in the line above $\mcA$ runs over the set of all clopen partitions of $X$)
\end{proof}
Note that Proposition \ref{p:metrizable_factor} shows in particular that for any $\Gamma$ there exists a free minimal action of $\Gamma$ on the Cantor space, a well-known fact (see e.g.~\cite{Hjorth_Mollberg}).
We are particularly interested in generic properties (in the sense of Baire category) in $\Min(\Gamma)$.

\begin{lemma} \label{l:cluster_invariant}
Assume that $(\alpha_n) \in A(\Gamma)^{\N}$ converge to some $\alpha \in A(\Gamma)$, and $\mu_n \in \mcM(\alpha_n)$ for all $n$. Let $\mu$ be a cluster point of $(\mu_n)_n$. Then $\mu \in \mcM(\alpha)$.
\end{lemma}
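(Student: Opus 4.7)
The plan is to work with clopen sets, which generate the Borel $\sigma$-algebra of the Cantor space $X$ and (being topologically closed and open) are continuity sets for every Borel probability measure.

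First I would replace the cluster point by a subsequential limit: since $X$ is metrizable the space $\mcP(X)$ of Borel probability measures on $X$, endowed with the weak-$*$ topology, is compact metrizable, so there is a subsequence (still denoted $\mu_n$ for convenience) with $\mu_n \to \mu$. To show $\mu \in \mcM(\alpha)$ it suffices to verify $\mu(A) = \mu(\alpha(\gamma)^{-1} A)$ for every $\gamma \in \Gamma$ and every clopen $A \subseteq X$, since $\Clopen(X)$ generates the Borel $\sigma$-algebra and is stable under finite intersections.

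Fix such a $\gamma$ and $A$. The crucial input is the description of the topology on $\Homeo(X)$ recalled at the beginning of section~\ref{s:tame}: convergence $\alpha_n(\gamma) \to \alpha(\gamma)$ means that for every clopen partition $\mcA$ of $X$ the homeomorphism $\alpha_n(\gamma)^{-1}\alpha(\gamma)$ eventually fixes every element of $\mcA$. Applying this to a partition refining $\{A, X \setminus A\}$, we get that $\alpha_n(\gamma)^{-1} A = \alpha(\gamma)^{-1} A$ for all $n$ large enough. Combined with $\alpha_n$-invariance of $\mu_n$ this gives, for all large $n$,
\[ \mu_n(A) = \mu_n(\alpha_n(\gamma)^{-1} A) = \mu_n(\alpha(\gamma)^{-1} A). \]
Now both $A$ and $\alpha(\gamma)^{-1} A$ are clopen, hence have empty topological boundary, so by the portmanteau theorem $\mu_n(A) \to \mu(A)$ and $\mu_n(\alpha(\gamma)^{-1}A) \to \mu(\alpha(\gamma)^{-1}A)$. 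Passing to the limit in the displayed equality yields $\mu(A) = \mu(\alpha(\gamma)^{-1}A)$, as required.

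There is no real obstacle here; the only subtlety worth flagging is the description of convergence in $\Homeo(X)$, which ensures that $\alpha_n(\gamma)^{-1} A$ eventually \emph{equals} (not just approximates) $\alpha(\gamma)^{-1} A$. Once this is in hand, the proof reduces to the standard observation that weak-$*$ limits of invariant measures, evaluated on continuity sets, behave as one would want.
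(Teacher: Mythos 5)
Your proof is correct and follows essentially the same route as the paper: pass to a convergent subsequence, use the fact that convergence in $\Homeo(X)$ forces $\alpha_n(\gamma)^{-1}A$ to eventually \emph{equal} $\alpha(\gamma)^{-1}A$ for each clopen $A$, then combine $\alpha_n$-invariance of $\mu_n$ with weak-$*$ convergence on clopen (hence boundary-free) sets. The extra details you supply (portmanteau, generation of the Borel $\sigma$-algebra by clopen sets) are standard and only make explicit what the paper leaves implicit.
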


\begin{proof}
We might as well assume that $(\mu_n)_n$ converges to $\mu$. Fix $A \in \Clopen(X)$ and $\gamma \in \Gamma$. There exists $N$ such that for any $n \ge N$ one has $\alpha_n(\gamma)A= \alpha(\gamma)A$. Hence $$\forall n \ge N \quad \mu_n(A)= \mu_n(\alpha_n(\gamma)A)= \mu_n(\alpha(\gamma)A)$$
Letting $n$ go to $\infty$, we obtain $\mu(A)= \mu(\alpha(\gamma)A)$. 
\end{proof}

\begin{prop}
The set $A^*(\Gamma)$ of all actions which admit an invariant probability measure is closed in $A(\Gamma)$.

The set $A^*_1$ of all uniquely ergodic actions of $\Gamma$ is a $G_\delta$ subset of $A(\Gamma)$.
\end{prop}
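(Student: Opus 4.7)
Both parts will follow from Lemma~\ref{l:cluster_invariant}, together with the standard fact that, since $X$ is compact metrizable, the space $P(X)$ of Radon probability measures on $X$ is compact metrizable in the weak-$*$ topology, and that for each clopen $A \subseteq X$ the evaluation map $\mu \mapsto \mu(A)$ is continuous on $P(X)$ (because $\mathbf{1}_A \in C(X)$). Throughout the argument I identify $\mcM(\alpha)$ with the set of $\alpha$-invariant elements of $P(X)$, which is a closed subset of $P(X)$.

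For the first assertion, I would take a sequence $(\alpha_n)$ in $A^*(\Gamma)$ converging to some $\alpha \in A(\Gamma)$, pick $\mu_n \in \mcM(\alpha_n)$ for each $n$, and use compactness of $P(X)$ to extract a cluster point $\mu \in P(X)$. Lemma~\ref{l:cluster_invariant} then gives $\mu \in \mcM(\alpha)$, so $\alpha \in A^*(\Gamma)$ and $A^*(\Gamma)$ is closed.

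For the second assertion, write $A^*_1(\Gamma) = A^*(\Gamma) \cap B$, where $B$ is the set of actions having at most one invariant probability measure. Since $A^*(\Gamma)$ is closed (hence $G_\delta$), it suffices to show that $B$ is $G_\delta$, equivalently that $A(\Gamma) \setminus B$ is $F_\sigma$. The key observation is that, whenever $\mu \neq \nu$ are both in $\mcM(\alpha)$, there must exist a clopen $A \subseteq X$ and rationals $q_1 < q_2$ such that $\mu(A) \leq q_1$ and $\nu(A) \geq q_2$. For each such triple $(A,q_1,q_2)$ with $A \in \Clopen(X)$ and $q_1,q_2 \in \Q$, I would define
\[ C_{A,q_1,q_2} = \{ \alpha \in A(\Gamma) : \exists \mu,\nu \in \mcM(\alpha),\ \mu(A) \leq q_1,\ \nu(A) \geq q_2 \}, \]
so that $A(\Gamma) \setminus B = \bigcup_{A,q_1,q_2} C_{A,q_1,q_2}$ is a countable union. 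The remaining step is to show that each $C_{A,q_1,q_2}$ is closed: given $\alpha_n \to \alpha$ in $C_{A,q_1,q_2}$ with witnesses $\mu_n,\nu_n$, extract (by compactness of $P(X)$) cluster points $\mu,\nu$; by Lemma~\ref{l:cluster_invariant} they lie in $\mcM(\alpha)$, and the inequalities $\mu_n(A) \leq q_1$ and $\nu_n(A) \geq q_2$ pass to the limit by continuity of $\mu \mapsto \mu(A)$.

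No serious obstacle arises: the only mildly subtle point is that the separation of two distinct measures on $P(X)$ can indeed be witnessed on the countable Boolean algebra $\Clopen(X)$, which follows from the fact that clopen sets separate points of $X$ and $P(X)$ is metrizable by a metric derived from a countable dense subset of $C(X)$ (and the locally constant functions suffice). Once this is granted, the countability of $\Clopen(X) \times \Q^2$ and the preceding closedness check immediately yield the $F_\sigma$ description of $A(\Gamma) \setminus B$, hence the $G_\delta$ description of $A^*_1(\Gamma)$.
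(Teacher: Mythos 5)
Your proposal is correct and follows essentially the same route as the paper: the first part is the same cluster-point argument via Lemma~\ref{l:cluster_invariant} and compactness of $P(X)$, and for the second part your closed sets $C_{A,q_1,q_2}$ (indexed by clopen $A$ and rational thresholds) play exactly the role of the paper's sets $\Omega_{A,\varepsilon}$, with the same limiting argument establishing their closedness. The only difference is that you make explicit the intersection with the closed set $A^*(\Gamma)$ and the fact that clopen sets separate Radon measures on a $0$-dimensional space, both of which the paper leaves implicit.
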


\begin{proof}
The first fact is an immediate consequence of the previous lemma allied to the compactness of the space of Borel probability measures on $X$.

To see why the second fact holds, fix a clopen $A$ and $\varepsilon >0$. Then it follows from the previous lemma that 
$$\Omega_{A,\varepsilon}= \{\alpha \colon \exists \mu_1,\mu_2 \in \mcM(\alpha) \ |\mu_1(A)-\mu_2(A)| \ge \varepsilon\}$$
is closed in $A(\Gamma)$. Hence 
$$\Sigma_{A}=\{\alpha \colon \forall \mu_1,\mu_2 \in \mcM(\alpha) \ \mu_1(A)=\mu_2(A)\}$$
is $G_\delta$ in $A(\Gamma)$. Considering the intersection of all $\Sigma(A)$ as $A$ ranges over clopen subsets of $X$, we conclude.
\end{proof}

\begin{prop} The following subsets of $A(\Gamma)$ are all $G_\delta$:
\begin{itemize}
    \item Actions with the dynamical comparison property.
    \item Actions such that $T(\alpha)$ is almost unperforated.
    \item Actions such that $T(\alpha)$ is unperforated.
    \item Actions such that $T(\alpha)$ is cancellative.
    \item Actions such that $\le$ is a partial order on $T(\alpha)$.
\end{itemize}

\end{prop}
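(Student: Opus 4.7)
The key observation is that for any two bounded clopen subsets $P,Q$ of $Y=X\times\N$ — of which there are countably many, independent of $\alpha$ — the set
\[V(P,Q):=\{\alpha\in A(\Gamma):[P]\le[Q]\text{ in }T(\alpha)\}\]
is open in $A(\Gamma)$. A witness for $[P]\le[Q]$ consists of a clopen decomposition of $P$ together with elements of $\tilde\Gamma$, all drawn from countable sets; for each fixed finite witness the set of $\alpha$ validating it is clopen in $A(\Gamma)$, since it only constrains the action of finitely many $\gamma\in\Gamma$ on finitely many clopen subsets of $X$. Hence $V(P,Q)$ is a countable union of clopen sets; by the same argument, the equidecomposition set $\{\alpha:[P]=[Q]\text{ in }T(\alpha)\}$ is open as well. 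This basic openness is the main technical content; the rest is descriptive set-theoretic bookkeeping based on the fact that in a metric space every (open)$\cap$(closed) set is $F_\sigma$ (write the open set as a countable union of closed sets and distribute).

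\textbf{Four algebraic conditions.} Each has the form ``$\forall$ countable data: $\mathrm{Hyp}\Rightarrow\mathrm{Concl}$'', where Hyp and Concl are Boolean combinations of basic clauses ``$[P]\le[Q]$'' or ``$[P]=[Q]$''. Explicitly:
\begin{itemize}
\item Unperforated: $\forall(n,P,Q),\ n[P]\le n[Q]\Rightarrow[P]\le[Q]$;
\item Almost unperforated: $\forall(n,P,Q),\ (n+1)[P]\le n[Q]\Rightarrow[P]\le[Q]$;
\item Cancellative: $\forall(P,Q,R),\ [P]+[Q]=[P]+[R]\Rightarrow[Q]=[R]$;
\item $\le$ a partial order: $\forall(P,Q),\ ([P]\le[Q])\wedge([Q]\le[P])\Rightarrow[P]=[Q]$.
\end{itemize}
By the basic openness, both Hyp and Concl are open sets in $\alpha$ for each fixed tuple. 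The failure set ``Hyp$\wedge\neg$Concl'' is therefore (open)$\cap$(closed), hence $F_\sigma$; the countable union over tuples is still $F_\sigma$, so its complement — the set of $\alpha$ for which the condition holds — is $G_\delta$.

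\textbf{Dynamical comparison.} The only additional ingredient is the openness, for each pair of nonempty clopen $A,B\subset X$, of
\[W(A,B):=\{\alpha\in A(\Gamma):\forall\mu\in\mcM(\alpha),\,\mu(A)<\mu(B)\}.\]
I would verify this by showing the complement is closed. If $\alpha_n\to\alpha$ and $\mu_n\in\mcM(\alpha_n)$ satisfies $\mu_n(A)\ge\mu_n(B)$, then weak-$*$ compactness of the space of Borel probability measures on the Cantor space $X$ yields a convergent subsequence $\mu_{n_k}\to\mu$; Lemma~\ref{l:cluster_invariant} places $\mu$ in $\mcM(\alpha)$, and continuity of $\mu\mapsto\mu(A),\mu(B)$ on clopen sets gives $\mu(A)\ge\mu(B)$. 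Dynamical comparison then reads as the intersection, over the countable collection of pairs of nonempty clopen $A,B\subset X$, of the sets $(A(\Gamma)\setminus W(A,B))\cup V(A\times\{0\},B\times\{0\})$; each failure set for a fixed $(A,B)$ is again (open)$\cap$(closed) $=F_\sigma$, so the total failure set is $F_\sigma$ and dynamical comparison is $G_\delta$.
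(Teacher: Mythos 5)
Your proposal is correct and follows essentially the same route as the paper: for each fixed pair of clopen data the comparison/equidecomposability clause is open (a union of clopen sets, one per finite witness) while the measure-theoretic hypothesis is closed (via Lemma~\ref{l:cluster_invariant} and weak-$*$ compactness), so each property is a countable intersection of sets of the form $(\text{closed}) \cup (\text{open})$, hence $G_\delta$. The paper writes out only the dynamical-comparison case and declares the algebraic conditions ``similar''; your explicit treatment of those conditions is precisely that similar argument, with the openness of $\{\alpha : [P]\le[Q]\}$ justified rather than merely asserted.
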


\begin{proof}
Let us only prove the first fact, the others being similar. Fix $A,B \in \Clopen(X)$. By Lemma \ref{l:cluster_invariant} the set $\Sigma(A,B)=\{\alpha \colon \exists \mu \in \mcM(\alpha) \ \mu(A) \ge \mu(B)\}$ is closed in $A(\Gamma)$. The set $\Delta(A,B)=\{\alpha \colon [A] \le [B] \text{ in } T(\alpha)\}$ is open in $A(\Gamma)$. The set of all actions of $\Gamma$ with the dynamical comparison property is equal to 
\[\bigcap_{A,B \in \Clopen(X)} \big(\Sigma(A,B) \cup \Delta(A,B) \big) \]
hence it is a $G_\delta$ subset of $A(\Gamma)$ (since the union of a closed and an open set is $G_\delta$, and there are only countably many clopen subsets in $X$).
\end{proof}



\subsection{\texorpdfstring{Conjugacy classes in $\Min(\Gamma)$}{Conjugacy classes in Min(Gamma)}}

Below we will repeatedly make use of the well-known and easily-verified fact that if $\alpha,\beta$ are two Cantor actions and $\alpha$ is a factor of $\beta$, then $\alpha$ belongs to the closure of the conjugacy class of $\beta$.

\begin{prop}
There exist actions with dense conjugacy classes in $A(\Gamma)$ and $\Min(\Gamma)$.
\end{prop}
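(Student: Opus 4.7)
The strategy is to exhibit, in each case, a single Cantor action having every element of a fixed dense sequence as a factor, and then invoke the stated fact that any factor of an action $\beta$ lies in the closure of the conjugacy class of $\beta$. Both $A(\Gamma)$ and $\Min(\Gamma)$ are Polish, hence separable, so we may fix dense sequences $(\alpha_n)$ in $A(\Gamma)$ and $(\beta_n)$ in $\Min(\Gamma)$, each $\alpha_n$ being an action on a copy $X_n$ of $X$, and similarly for $\beta_n$.

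For $A(\Gamma)$: form the diagonal product action $\alpha = \prod_n \alpha_n$ on $Y=\prod_n X_n$. Since $Y$ is a countable product of Cantor spaces it is itself a Cantor space; fixing any homeomorphism $Y \cong X$ we may regard $\alpha$ as an element of $A(\Gamma)$. Each coordinate projection $\pi_n \colon Y \to X_n$ is a continuous $\Gamma$-equivariant surjection, so $\alpha_n$ is a factor of $\alpha$ for every $n$. Hence the closure of the conjugacy class of $\alpha$ contains $\{\alpha_n : n \in \N\}$, which is dense in $A(\Gamma)$, and we are done.

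For $\Min(\Gamma)$: start the same way, forming $\beta = \prod_n \beta_n$ on $Y=\prod_n X_n$, but now the product is no longer minimal. By Zorn's lemma let $M \subseteq Y$ be any closed, nonempty, $\Gamma$-invariant, minimal subset. For each $n$, $\pi_n(M)$ is closed, nonempty and $\Gamma$-invariant in $X_n$, hence equal to $X_n$ by minimality of $\beta_n$; so $\beta_n$ is a factor of the restricted action $\beta|_M$.

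It remains to check that $M$, viewed as a flow, belongs to $\Min(\Gamma)$, i.e.\ that $M$ is a Cantor space. It is compact, metrizable and $0$-dimensional as a closed subspace of $Y$; if some point $m \in M$ were isolated in $M$, then every point of its $\Gamma$-orbit would be isolated (since $\Gamma$ acts by homeomorphisms), and minimality would force $M$ to consist entirely of isolated points, hence be finite by compactness, contradicting the fact that $M$ surjects onto the infinite space $X_n$. So $M$ is perfect, hence a Cantor space, and transporting the action via a homeomorphism $M \cong X$ we obtain an element of $\Min(\Gamma)$ whose conjugacy class closure contains each $\beta_n$, and therefore is dense in $\Min(\Gamma)$. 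The only non-trivial point in the argument is this verification that the minimal subflow is perfect rather than finite, which is precisely where minimality of the $\beta_n$ (together with $X_n$ being infinite) is used.
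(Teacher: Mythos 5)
Your proof is correct and follows essentially the same route as the paper: take the product of a dense sequence of actions, observe each term is a factor, and (for $\Min(\Gamma)$) pass to a minimal component. The only addition is your explicit verification that the minimal subflow is perfect (hence a Cantor space), a detail the paper leaves implicit but which you justify correctly.
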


\begin{proof}
The case of $A(\Gamma)$ is well known: let $(\alpha_n)_n$ enumerate a dense subset of $A(\Gamma)$, then consider the product action $\alpha$. It is again a Cantor action, and each $\alpha_n$ is a factor of $\alpha$. In particular each $\alpha_n$ is contained in the closure of the conjugacy class of $\alpha$, which is thus dense in $A(\Gamma)$.

The case of $\Min(\Gamma)$ is similar; enumerate a dense subset $(\alpha_n)_n$ of $\Min(\Gamma)$ and let $\alpha$ be any minimal component of the product action. Then $\alpha$ is a minimal Cantor action which maps into every $\alpha_n$, so by minimality each $\alpha_n$ is a factor of $\alpha$.
\end{proof}

The existence of dense conjugacy classes implies that every Baire-measurable, conjugacy invariant subset of $\Min(\Gamma)$ is either meager or comeager (the so-called $0$-$1$ topological law, see \cite{Kechris1995}).

\subsection{\texorpdfstring{Generic properties in $\Min(\Gamma)$}{Generic properties in Min(Gamma)}}

For $\Z$, there exists a generic conjugacy class (the universal odometer \cite{Hochman2008}). What about other groups? I do not know the answer to this question even for $\Z^2$.

Proposition \ref{p:metrizable_factor} immediately implies the following:

\begin{prop}\label{p:generic_properties}
A generic element $\alpha$ of $\Min(\Gamma)$ is such that:
\begin{enumerate}
    \item $\alpha$ is free.
    \item $T(\alpha)$ is unperforated (hence $T(\alpha)$ has dynamical comparison).
    \item The algebraic order on $T(\alpha)$ is a partial ordering.
\end{enumerate}
\end{prop}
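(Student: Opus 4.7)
The plan is to combine the $G_\delta$-ness of the three stated conditions with the fact (already noted) that factors lie in the closure of the conjugacy class, in order to deduce density in $\Min(\Gamma)$, and then to invoke the Baire category theorem. The actual content of the argument is entirely carried by the preceding Proposition \ref{p:metrizable_factor}.

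First I would let $G \subset \Min(\Gamma)$ denote the set of $\alpha$ satisfying all three items: $\alpha$ is free, $T(\alpha)$ is unperforated, and $\le$ is a partial order on $T(\alpha)$. Each of these three conditions defines a $G_\delta$ subset of $A(\Gamma)$ — freeness by Proposition \ref{p:free_G_delta}, the other two by the subsequent proposition listing $G_\delta$ conditions on $T(\alpha)$ — so $G$ is a $G_\delta$ subset of the Polish space $\Min(\Gamma)$.

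Next I would establish density of $G$ in $\Min(\Gamma)$. Given any $\alpha \in \Min(\Gamma)$, Proposition \ref{p:metrizable_factor} supplies a free, minimal Cantor action $\beta$ of $\Gamma$ with $T(\beta)$ unperforated and partially ordered, such that $\alpha$ is a factor of $\beta$. In particular $\beta \in G$, and since the three conditions defining $G$ are conjugacy-invariant, the entire conjugacy class of $\beta$ lies in $G$. By the fact recalled at the start of Subsection 4.2, $\alpha$ belongs to the closure of this conjugacy class, hence to $\overline{G}$. Since $\alpha \in \Min(\Gamma)$ was arbitrary, $G$ is dense.

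A dense $G_\delta$ in the Polish space $\Min(\Gamma)$ is comeager, which establishes the three items for a generic $\alpha$. The parenthetical implication in item (2) — that unperforation entails dynamical comparison — is then just Proposition \ref{p:minimal_dynamical_unperforated} combined with the trivial ``unperforated $\Rightarrow$ almost unperforated''. The ``hard step'' is in fact not here at all: it has been entirely pushed onto Proposition \ref{p:metrizable_factor}, whose explicit construction of a factoring action $\beta$ via a carefully chosen countable $\Gamma$-invariant Boolean subalgebra of $\Clopen(\mu \Gamma)$ supplies all the dynamical content; the present argument is then a routine Baire category exercise.
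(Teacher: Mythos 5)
Your proof is correct and is essentially the paper's own argument: the paper simply states that Proposition \ref{p:metrizable_factor} immediately implies the result, and the routine Baire-category reasoning you supply (the three conditions are conjugacy-invariant $G_\delta$ sets, and density follows from Proposition \ref{p:metrizable_factor} together with the fact that a factor lies in the closure of the conjugacy class) is exactly the intended deduction.
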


The fact that a generic minimal Cantor action of an amenable group has the dynamical comparison property also follows from \cite{Conley_etc2018} and the relation between \emph{almost finiteness} and dynamical comparison (see Theorem 6.1 of \cite{Kerr_Szabo2020}). Conversely using the previous proposition one recovers Theorem 4.2 of \cite{Conley_etc2018} by applying Theorem 6.1 of \cite{Kerr_Szabo2020}.

\section{Weakenings of dynamical comparison} \label{s:measured_comparison}
This work was in part motivated by the following question: given a minimal action $\alpha$ of a countable amenable group on the Cantor space, does there exist a minimal $\Z$-action $\beta$ such that $\mcM(\alpha)=\mcM(\beta)$? Of course, if $\alpha$ is orbit equivalent to a $\Z$-action then the previous question has an affirmative answer.

The criterion established in \cite{Melleray_dynamicalsimplices} (which slightly simplifies a condition obtained in \cite{Melleray2017}) shows that there exists such an action $\beta$ as soon as $\mcM(\alpha)$ is such that, for any $A,B \in \Clopen(X)$ such that $\mu(A) < \mu(B)$ for every $\mu \in \mcM(\alpha)$, there exists $C \in \Clopen(X)$ such that $\mu(A)=\mu(C)$ for all $\mu \in \mcM(\alpha)$ and $C$ is contained in $B$. This is a weakening of dynamical comparison.

\begin{defn}
Let $\alpha$ be a minimal action of $\Gamma$ on a Cantor space $X$. Denote by $K(\alpha)$ the group $\{g \in \Homeo(X) \colon \forall \mu \in \mcM(\alpha) \ g_*\mu=\mu\}$.

We say that $\alpha$ has the \emph{measured comparison property} if the action $K(\alpha) \actson X$ has dynamical comparison.
\end{defn}

Here we are departing from our earlier conventions, since $K(\alpha)$ is not countable, and we only defined comparison for countable groups (this played a part in some arguments, when we are using an enumeration of $\Gamma$). However, whenever $X$ is metrizable, $\Homeo(X)$ is separable, whence $K(\alpha)$ admits a countable dense subgroup $\Lambda$. Then $\mcM(K(\alpha))= \mcM(\Lambda \actson X)$ ($=\mcM(\alpha)$), $T(K(\alpha))=T(\Lambda \actson X)$ so all our earlier results apply equally well to any subgroup of $\Homeo(X)$.

\begin{lemma}
Let $\alpha$ be a minimal action of an amenable group on the Cantor space $X$. Then there exists a $\Z$-action $\beta$ such that $\mcM(\alpha)=\mcM(\beta)$ iff $\alpha$ has measured comparison.
\end{lemma}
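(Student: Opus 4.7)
The plan is to reduce to the splitting criterion from \cite{Melleray_dynamicalsimplices} mentioned at the beginning of this section: the existence of such a $\Z$-action $\beta$ is equivalent to the statement that, whenever $A, B \in \Clopen(X)$ satisfy $\mu(A) < \mu(B)$ for every $\mu \in \mcM(\alpha)$, there is $C \in \Clopen(X)$ with $C \subseteq B$ and $\mu(C) = \mu(A)$ for every $\mu \in \mcM(\alpha)$. It is then enough to show that measured comparison for $\alpha$ is equivalent to this splitting criterion.

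First I would observe that $\mcM(K(\alpha) \actson X) = \mcM(\alpha)$: the inclusion $\mcM(\alpha) \subseteq \mcM(K(\alpha) \actson X)$ is immediate from the definition of $K(\alpha)$, while the reverse inclusion follows from $\alpha(\Gamma) \subseteq K(\alpha)$. Combined with the passage to a countable dense subgroup mentioned right after the definition of measured comparison, this justifies applying the comparison machinery developed earlier in the paper to $K(\alpha) \actson X$.

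The easy direction is measured comparison $\Rightarrow$ splitting criterion. Given $A, B \in \Clopen(X)$ with $\mu(A) < \mu(B)$ for every $\mu \in \mcM(\alpha)$, dynamical comparison for $K(\alpha) \actson X$ produces a clopen partition $A = \bigsqcup_{i=1}^n A_i$ and elements $g_1,\ldots,g_n \in K(\alpha)$ such that $C := \bigsqcup_{i=1}^n g_i A_i \subseteq B$. Since each $g_i$ preserves every element of $\mcM(\alpha)$ by definition of $K(\alpha)$, I obtain $\mu(C) = \sum_i \mu(g_i A_i) = \sum_i \mu(A_i) = \mu(A)$ for every $\mu \in \mcM(\alpha)$, which is exactly the splitting criterion.

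For the converse, I would apply \cite{Melleray_dynamicalsimplices} to produce a $\Z$-action $\beta$ on $X$ with $\mcM(\beta) = \mcM(\alpha)$, and then argue that $\beta$ is automatically minimal: since $\alpha$ is minimal, every element of $\mcM(\alpha) = \mcM(\beta)$ has full support, so any proper closed $\beta$-invariant subset $F \subsetneq X$ would, by Krylov--Bogolioubov applied to the $\Z$-action $\beta$ restricted to $F$, carry an invariant probability measure with support in $F$, contradicting full support. Minimality of $\beta$ on the Cantor space then invokes the Glasner--Weiss theorem to give dynamical comparison for $\beta$, and the inclusion $\beta(\Z) \subseteq K(\alpha)$ (which holds because $\beta(\Z)$ preserves every $\mu \in \mcM(\beta) = \mcM(\alpha)$) turns every $\beta$-equidecomposition into a $K(\alpha)$-equidecomposition. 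Hence clopen $A, B$ with $\mu(A) < \mu(B)$ on $\mcM(\alpha)$ satisfy $[A] \le [B]$ in $T(\beta)$, and a fortiori in $T(K(\alpha) \actson X)$, yielding measured comparison. The one slightly delicate point I anticipate is checking that the cited criterion produces $\beta$ acting on $X$ itself with the measure sets equal on the nose, rather than merely up to an abstract simplex isomorphism; but this is precisely what \cite{Melleray_dynamicalsimplices} is designed to deliver, so no new argument should be required beyond a careful invocation.
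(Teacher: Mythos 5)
Your proof is correct and follows the same core route as the paper: both directions are mediated by the splitting criterion of \cite{Melleray_dynamicalsimplices}, and your argument that measured comparison yields that criterion (a $K(\alpha)$-equidecomposition of $A$ into a subset $C$ of $B$, with $\mu(C)=\mu(A)$ because elements of $K(\alpha)$ preserve every $\mu\in\mcM(\alpha)$) is exactly the paper's. Where you diverge is the direction ``existence of $\beta$ implies measured comparison'': the paper disposes of it in one line by invoking the fact that all $\Z$-actions on compact $0$-dimensional metrizable spaces have dynamical comparison, together with $\beta(\Z)\subseteq K(\beta)=K(\alpha)$, whereas you route through the splitting criterion and Glasner--Weiss, observing that the relevant $\beta$ is forced to be minimal because every measure in $\mcM(\beta)=\mcM(\alpha)$ has full support. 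This buys you a more classical input (Glasner--Weiss for minimal $\Z$-actions rather than the general comparison theorem) at the cost of a more convoluted logical structure. One caution: you assert at the outset that existence of $\beta$ is \emph{equivalent} to the splitting criterion, whereas the paper only states the ``if'' direction; the ``only if'' direction is not free. Your own observation closes it --- any $\Z$-action with $\mcM(\beta)=\mcM(\alpha)$ is minimal by the full-support/Krylov--Bogolioubov argument, hence satisfies Glasner--Weiss comparison, hence the splitting criterion --- but you should apply that observation to the \emph{given} $\beta$, not only to the one produced by the criterion, to make the reduction airtight.
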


\begin{proof}
Implication from left to right is immediate, since $\Z$-actions have dynamical comparison, which implies measured comparison.

Conversely, assume that $\alpha$ has measured comparison and let $A,B \in \Clopen(X)$ be such that $\mu(A) < \mu(B)$ for all $\mu \in \mcM(\alpha)$.
Since $\mcM(\alpha)=\mcM(K(\alpha))$, there exists $g \in \llbracket K(\alpha) \rrbracket=K(\alpha)$ such that $gA \subset B$, so the criterion of \cite{Melleray_dynamicalsimplices} is satisfied.
\end{proof}

So the various characterizations obtained in section \ref{s:clopen_type_semigroup} have counterparts for measured comparison. Unfortunately, this does not seem to lead to new examples of minimal actions with measured comparison, because it seems difficult to say anything meaningful about the group $K(\alpha)$. The same is true of another, intermediate weakening of dynamical comparison which we briefly discuss now.

\begin{defn}
Let $\alpha \colon \Gamma \actson X$ be an action of a countable group $\Gamma$ on the Cantor space. The \emph{full group} of $\alpha$ is the group 
\[ [\alpha] = \{g \in \Homeo(X) \colon \forall x \in X \ \exists \gamma \in \Gamma \ g(x)= \alpha(\gamma)x\}\]
\end{defn}

\begin{defn}
The action $\alpha$ has \emph{orbital comparison} if $[\alpha] \actson X$ has dynamical comparison.
\end{defn}

Since $\mcM(\alpha)=\mcM([\alpha])=\mcM(K(\alpha))$, dynamical comparison implies orbital comparison, which implies measured comparison. None of the converse implications is clear. If an action $\alpha$ is orbit equivalent to a $\Z$-action, then it must satisfy orbital comparison.

By the same argument as in the proof of Lemma~3.5 in \cite{GlasnerWeiss}, orbital comparison does imply that $\overline{[\alpha]}= K(\alpha)$. Even in the case where there are no invariant measures, I do not know of any example where this equality does not hold (it follows from dynamical comparison, and I do not know examples where dynamical comparison fails for minimal Cantor actions).

\begin{question}
Let $\alpha \colon \Gamma \actson X$ be a minimal action on the Cantor space. Is the full group $[\alpha]$ dense in $K(\alpha)=\{g \in \Homeo(X) \colon \forall \mu \in \mcM(\alpha) \ g_*\mu=\mu\}$?
\end{question}

Note that, if $\mcM(\alpha)$ is empty, one has $K(\alpha)=\Homeo(X)$, and it seems unlikely that emptiness of $\mcM(\alpha)$ is enough to guarantee that $[\alpha]$ is dense in $\Homeo(X)$ (though there certainly are examples where $\llbracket \alpha \rrbracket$ is not dense, like the action of a nonabelian free group on its boundary).

\section{Concluding remarks and questions}

We collect a few questions that came up in the paper; we mostly focus on the case of minimal actions.

\subsection{On the clopen type semigroup}
Even though dynamical comparison has so far mostly been studied for actions of amenable groups, the notion makes sense in general and I do not even know the answer to the following question.

\begin{question}\label{q:everyone_has_dynamical_comparison}
Does any minimal action of a countable group on the Cantor space satisfy the dynamical comparison property?
\end{question}

We noted early on in the paper that cancellativity of $T(\alpha)$ is an important condition, which is not always satisfied even by free, topologically transitive $\Z$-actions. However, in the minimal case, when the action admits an invariant measure and has dynamical comparison then $T(\alpha)$ is cancellative (see Proposition \ref{prop: link between weak comparability, almost unperforation and cancellation}). The argument for that is rather indirect as it goes through the fact that dynamical comparison is equivalent to weak comparability in this context, and a simple, conical, stably finite refinement monoid with weak comparability is cancellative by a theorem of Ara and Pardo.

\begin{question}\label{q:everyone_is_cancellative}
Is it true that $T(\alpha)$ is cancellative for any minimal Cantor action of a countable amenable group?
\end{question}

By Corollary 1.9 of \cite{Ara_Pardo1996}, to obtain a positive answer it would be enough to prove that $T(\alpha)$ is \emph{strictly cancellative}, i.e. that $a+b < a+c \Rightarrow b < c$.

In the case of $\Z$, cancellativity for minimal actions is easy to establish thanks to the existence of a dense locally finite subgroup of $\llbracket \alpha \rrbracket$. Even for $\Z^2$ I do not know a simple proof of cancellativity for minimal actions.

\begin{question}\label{q:everyone_equidecomposition_given_by_top_full_group}
Let $\alpha$ be a minimal action of a countable group on the Cantor space, and let $A$, $B$ be two clopens such that $[A]=[B]$ in $T(\alpha)$. Must there exist $g \in \llbracket \alpha \rrbracket$ such that $gA=B$?
\end{question}

For amenable groups, a positive answer to Question \ref{q:everyone_has_dynamical_comparison} implies a positive answer to Question \ref{q:everyone_is_cancellative}, and a positive answer to Question \ref{q:everyone_is_cancellative} implies a positive answer to Question \ref{q:everyone_equidecomposition_given_by_top_full_group}.

\subsection{On the space of actions}
We fix a countable group $\Gamma$ and denote again by $\Min(\Gamma)$ the set of all minimal actions of $\Gamma$ on the Cantor space $X$, with its usual Polish topology.

\begin{question}
For which groups $\Gamma$ is there a generic conjugacy class in $\Min(\Gamma)$?
\end{question}

It seems that, in order to make progress on this question, it would be useful to determine the closure of $\Min(\Gamma)$ inside the space of actions $A(\Gamma)$. When $\Gamma=\Z$ this is understood: by Theorem 5.9 of \cite{Bezugly_Dooley_Kwiatkowski2004} the closure of the set of minimal homeomorphisms in $\Homeo(X)$ is the set of all homeomorphisms $g$ such that for any nontrivial clopen set $U$ both $gU \setminus U$ and $U \setminus gU$ are nonempty. 

\begin{question}
What is the closure of $\Min(\Gamma)$ inside $A(\Gamma)$? Can one give a description similar to the case $\Gamma=\Z$?
\end{question}

It seems very unlikely that such a description is possible in general.

\begin{question}
Let $\Gamma$ be a countable, amenable group. Is unique ergodicity generic in $\Min(\Gamma)$? More generally, can one say anything meaningful about the set of invariant Borel probability measures of a generic minimal action?
\end{question}

We at least know that the set of all uniquely ergodic actions is $G_\delta$, and is nonempty when $\Gamma$ is amenable. To see that it is nonempty, one can use a generalization of the Jewett--Krieger theorem to amenable groups, due to Rosenthal; or one can apply a more general theorem of Frej--Huczek \cite{Huczek_Frej2018}.

In the non-amenable case, it seems that we know even less.

\begin{question}
Does every countable group admit a strictly ergodic action on the Cantor space?
\end{question}

Elek \cite{Elek2021} recently proved that every countable group admits a free, minimal action on the Cantor space which preserves a Borel probability measure.

\appendix

\section{A variation on an argument of Krieger}\label{a:Krieger}
We now describe an argument that completes the proof of Proposition \ref{p:tame_gives_ample}.
We fix a metrizable $0$-dimensional compact space $X$.

\begin{defn}[Krieger \cite{Krieger1980}]
Let $G$ be a subgroup of $\Homeo(X)$.

Given $g \in \Homeo(X)$ and a subalgebra $\mcA \subseteq \mcB_X$ which is $g$-invariant, we denote by $g_{|\mcA}$ the automorphism of $\mcA$ induced by $g$, and by $G_{|\mcA}= \{g_{|\mcA} \colon g \in G\}$. We say that $(\mcA,G)$ is a \emph{unit system} if:
\begin{itemize}
\item $\mcA$ is a subalgebra of $\Clopen(X)$.
\item $G$ leaves $\mcA$ invariant and $G=[G,\mcA]$.
\item The mapping $g \mapsto g_{|\mcA}$ is an isomorphism from $G$ to $\mathrm{Aut}(\mcA)$.
\end{itemize}
(note that it follows from these conditions that $\{x \in X \colon g(x)=x\}$ belongs to $\mcA$)
\end{defn}

\begin{defn}
We say that a unit system $(\mcA,\Gamma)$ is \emph{compatible} with a full group $G$ if for any $A \in \mcA$ and any $\gamma \in \Gamma$ there exists $g \in G$ such that $\gamma A = g A$.
\end{defn}

The main tool for our construction is an adaptation to our context of Lemma 3.4 of \cite{Krieger1980}, and the proof is essentially the same as Krieger's; we describe the argument for the reader's convenience (our formulation here is quite close to the one used for another variation on Krieger's theorem given in \cite{mellerayInvariantMeasuresOrbit2021}). 

\begin{lemma}\label{l:Krieger}
Fix a full group $G \subset \Homeo(X)$. Assume that we are given finite unit systems $(\mcA,\Delta)$, $(\mcC,\Lambda)$  compatible with $G$, along with an isomorphism $\Phi \colon \mcA \to \mcC$ with the following properties:
\begin{enumerate}
\item For all $A \in \mcA$, there exists $g \in G$ such that $\Phi(A)=g(A)$.
\item $\Lambda_{|\mcC}= \Phi \Delta_{|\mcA} \Phi^{-1}$.
\end{enumerate}
Then, for every $G$-compatible finite unit system $(\mcA',\Delta')$ refining $(\mcA,\Delta)$, there exists a $G$-compatible finite unit system $(\mcC',\Lambda')$ and an isomorphism $\Psi \colon \mcA' \to \mcC'$ that extends $\Phi$ and such that the conditions above are still satisfied. 
\end{lemma}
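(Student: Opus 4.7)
The plan is to carry out an explicit atom-by-atom construction, closely patterned on Krieger's original argument. First I would fix, for each atom $A$ of $\mcA$, a witness $g_A \in G$ with $g_A(A) = \Phi(A)$ provided by condition (i). Given an atom $B$ of $\mcA'$, denote by $A(B)$ the unique atom of $\mcA$ containing $B$, set $\Psi(B) = g_{A(B)}(B)$, and let $\mcC'$ be the Boolean algebra generated by these sets. Since $g_A$ maps the partition of $A$ by atoms of $\mcA'$ onto a partition of $\Phi(A)$, the algebra $\mcC'$ refines $\mcC$, the map $\Psi$ extends $\Phi$, and condition (i) for $\Psi$ holds with witnesses $g_{A(B)}$.

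Next I would construct $\Lambda'$. For each $\delta' \in \Delta'$ and each atom $B$ of $\mcA'$, compatibility of $(\mcA', \Delta')$ with $G$ furnishes some $h_{B, \delta'} \in G$ with $h_{B, \delta'}(B) = \delta'(B)$. Define $\rho(\delta') \in \Homeo(X)$ by gluing, on each atom $\Psi(B)$ of $\mcC'$, the homeomorphism $g_{A(\delta'(B))} \circ h_{B, \delta'} \circ g_{A(B)}^{-1}$; because $G$ is a full group, this piecewise formula produces an element of $G$, and by construction $\rho(\delta')|_{\mcC'} = \Psi \, \delta'|_{\mcA'} \, \Psi^{-1}$. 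Setting $\Lambda' = \rho(\Delta')$, condition (ii) is immediate, and compatibility of $(\mcC', \Lambda')$ with $G$ is automatic since $\rho(\delta')$ agrees on each atom of $\mcC'$ with an element of $G$. The essential verification is that $\rho$ is a group homomorphism: writing $B_2 = \delta'_2(B)$, one checks that $h_{B_2, \delta'_1} \circ h_{B, \delta'_2}$ and $h_{B, \delta'_1 \delta'_2}$ both restrict on $B$ to $(\delta'_1 \delta'_2)|_B$, which is exactly what is needed for $\rho(\delta'_1) \rho(\delta'_2)$ and $\rho(\delta'_1 \delta'_2)$ to coincide on each atom $\Psi(B)$.

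Finally, I would verify that $(\mcC', \Lambda')$ is a unit system. Injectivity of $\rho$ follows from the fact that $\rho(\delta')|_{\mcC'}$ determines $\delta'|_{\mcA'}$ by conjugation with $\Psi$, which in turn determines $\delta'$ since $(\mcA', \Delta')$ is a unit system; surjectivity of the restriction $\Lambda' \to \mathrm{Aut}(\mcC')$ follows from $\Psi$ conjugating $\mathrm{Aut}(\mcA') = \Delta'|_{\mcA'}$ onto $\mathrm{Aut}(\mcC')$. The mildly subtle point is the equality $\Lambda' = [\Lambda', \mcC']$: given any piecewise gluing of elements $\rho(\delta'_i)$ along an $\mcC'$-partition, applying $\Psi^{-1}$ yields an $\mcA'$-partition together with matching elements $\delta'_i \in \Delta'$, whose gluing lies in $\Delta'$ by the unit system property for $(\mcA', \Delta')$; applying $\rho$ then recovers the original glued homeomorphism. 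The main obstacle to anticipate is precisely this bookkeeping in the second step: checking that $\rho$ is a homomorphism and that $\Lambda'$ is closed under $\mcC'$-piecewise gluings both rely on the observation that the witness $h_{B, \delta'}$ only matters through its restriction to $B$, which is what renders the construction essentially canonical once the $g_A$'s have been fixed.
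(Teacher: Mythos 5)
The construction of $\mcC'$ and $\Psi$ in your first step is fine and close in spirit to the paper's, but the step you yourself single out as essential --- that $\rho$ is a group homomorphism --- does not go through as written, and the justification you give for it is false. Compatibility of $(\mcA',\Delta')$ with $G$ only provides, for each atom $B$ and each $\delta'\in\Delta'$, an element $h_{B,\delta'}\in G$ with $h_{B,\delta'}(B)=\delta'(B)$ \emph{as sets}; it does not provide one agreeing with $\delta'$ pointwise on $B$. (If it did, gluing such witnesses along the atoms of $\mcA'$ would give $\delta'\in[G,\mcA']\subseteq G$, i.e.\ $\Delta'\subseteq G$, which is not assumed and would essentially trivialize the lemma.) Consequently the claim that $h_{B_2,\delta'_1}\circ h_{B,\delta'_2}$ and $h_{B,\delta'_1\delta'_2}$ ``both restrict on $B$ to $(\delta'_1\delta'_2)_{|B}$'' is wrong: each restricts to \emph{some} homeomorphism from $B$ onto $\delta'_1\delta'_2(B)$, but with independently chosen witnesses these restrictions have no reason to coincide. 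So $\rho(\delta'_1)\rho(\delta'_2)$ and $\rho(\delta'_1\delta'_2)$ induce the same automorphism of $\mcC'$ while being different homeomorphisms in general, $\rho(\Delta')$ need not be a group, and the same defect undermines your verification that $\Lambda'=[\Lambda',\mcC']$ with $\lambda\mapsto\lambda_{|\mcC'}$ injective: both require that $\rho(\delta')_{|\Psi(B)}$ depend only on the atom $\delta'(B)$ and not on $\delta'$ itself, which is precisely what arbitrary witnesses fail to guarantee.

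The missing ingredient is the coherence of the choices, and this is exactly what the paper's orbit-representative bookkeeping supplies: it fixes a representative atom in each orbit and builds the partial maps between atoms as ``move to the representative, then move to the target,'' so that the cocycle identity between partial maps holds automatically. Your argument can be repaired the same way: for each $\Delta'$-orbit $\xi$ of atoms of $\mcA'$ fix a representative $B_\xi$ and elements $k_B\in G$ with $k_B(B_\xi)=B$ (taking $k_{B_\xi}$ to be the identity), and \emph{define} $h_{B,\delta'}$ on $B$ to be $k_{\delta'(B)}\circ k_B^{-1}$; then $h_{B_2,\delta'_1}\circ h_{B,\delta'_2}=h_{B,\delta'_1\delta'_2}$ on $B$ and your computation of $\rho(\delta'_1)\rho(\delta'_2)=\rho(\delta'_1\delta'_2)$ goes through, as do the remaining verifications. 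Without such a normalization the construction is not ``essentially canonical once the $g_A$'s have been fixed,'' and the conclusion does not follow.
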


\begin{proof}
For every orbit $\rho$ of the action of $\Delta$ on atoms of $\mcA$ we choose a representative $A_\rho$. For every $A \in \rho$, $A \ne A_\rho$, we denote by $\delta(\rho,A)$ the element of $\Delta$ that induces on the atoms of $\mcA$ the transposition exchanging $A$ and $A_\rho$; we define similarly $\lambda(\rho,A)$ the element of $\Lambda$ inducing the transposition that maps $\Phi(A)$ to $\Phi(A_\rho)$. 

For each $\rho$, we can write 
$$\Phi(A_\rho)=  \bigsqcup_{B \in \text{atoms}(\mcA') \colon B \subseteq A_\rho} B' \ , $$
with $B'$ in the $G$-orbit of $B$, for each atom $B$ of $\mcA'$. 

We then define a finite Boolean algebra $\mcC'$ by setting as its atoms all $B'$ for $B$ an atom of $\mcA'$ contained in an $A_\rho$, as well as all $\lambda(\rho,A)(B')$ for $A \in A_\rho$. Then we obtain the desired $\Psi \colon \mcA' \to \mcC'$ as follows: first, we set $\Psi(B)=B'$ for all atoms $B$ of $\mcA'$ contained in an $A_\rho$; and whenever $A$ is an atom of $\mcA$ belonging some $\rho$, $A \ne A_\rho$, and $B$ is an atom of $\mcA'$ contained in $A$, we set $\Psi(B)= \lambda(\rho,A) \Psi \delta(\rho,A)(B)$.

We now need to describe the group $\Lambda'$.
In the remainder of this proof, the letter $\sigma$ will always stand for an orbit of the action of $\Delta$ on the atoms of $\mcA'$, and the letter $\xi$ for an orbit of the action of $\Delta'$ on the atoms of $\mcA'$. By definition, for all $\sigma$ there exists a unique $\xi$ such that $\sigma \subseteq \xi$.

We begin by picking, for each $\sigma$, an atom $E_\sigma$ of $\mcA'$ contained in $\sigma$. Then for each orbit $\xi$ we pick some $\sigma(\xi)$ such that $\sigma(\xi) \subseteq \xi$. 

We can find for all $\sigma,\xi$ such that $\sigma \subseteq \xi$ some $g(\xi,\sigma) \in G$ which maps $\Psi(E_{\sigma(\xi)})$ onto $\Psi(E_{\sigma})$ and is the identity elsewhere. For all $E \in \sigma$, $E \ne E_\sigma$ we denote by 
$\lambda(\sigma,E)$ the element of $\Lambda$ which induces on the atoms of $\mcC$ the transposition that maps $\Psi(E_\sigma)$ onto $\Psi(E)$. Then our desired group $\Lambda'$ is the group generated by 
$\{g(\xi,\sigma) \colon \sigma \subseteq \xi \} \cup \{\lambda(\sigma,E) \colon E \in \sigma, E \ne E_\sigma\}$.

By construction $(\mcC',\Lambda')$ is a unit system, and the action of $\Lambda'$ on the atoms of $\mcC'$ coincides with that of all permutations of the atoms of $\mcC'$ which stabilize all $\Psi(\xi)$. Thus $\Psi$ carries $\Delta'$-orbits onto $\Lambda'$-orbits and we have as desired $\Lambda'_{|\mcC'}= \Psi \Delta_{|\mcA'} \Psi^{-1}$.
\end{proof}

Now, we fix a full group $H$, and an ample group $\Lambda$ such that, for any $U,V \in \Clopen(X)$, one has
$$( \exists h \in H \ hU=V) \Leftrightarrow (\exists \lambda \in \Lambda \ \lambda U=V) $$
This condition amounts to stating that $\overline{H} = \overline {\Lambda}$; equivalently, $H$ is $\Lambda$-compatible and $\Lambda$ is $H$-compatible. 

\begin{prop}
There exists $g \in \Homeo(X)$ such that:
\begin{itemize}
    \item For every clopen $U$, there exists $h \in H$ such that $gU=hU$ (equivalently, $g \overline{H} g^{-1}= \overline{H}$).
    \item $g \Lambda g^{-1} \subseteq H$.    
\end{itemize}
\end{prop}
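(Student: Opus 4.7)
The plan is to build $g$ by Stone duality from a direct limit of finite Boolean isomorphisms, using Lemma \ref{l:Krieger} as the inductive workhorse. Since $X$ is metrizable, $\Clopen(X)$ is countable; fix enumerations $(U_n)_{n \in \N}$ of $\Clopen(X)$ and $(\lambda_n)_{n \in \N}$ of the countable locally finite group $\Lambda$. I would construct inductively a nested sequence of finite $H$-compatible unit systems $(\mcA_n, \Delta_n)$ with $\Delta_n \subseteq \Lambda$, and $(\mcC_n, \Lambda_n)$ with $\Lambda_n \subseteq H$, together with isomorphisms $\Phi_n \colon \mcA_n \to \mcC_n$, each $\Phi_{n+1}$ extending $\Phi_n$, satisfying the two conditions of Lemma \ref{l:Krieger} at every stage, as well as the bookkeeping conditions $U_n \in \mcA_{n+1}$, $U_n \in \mcC_{n+1}$, and $\lambda_n \in \Delta_{n+1}$.

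The inductive step splits into a forward and a backward part. In the forward part, I would enlarge $(\mcA_n, \Delta_n)$ into a finite unit system $(\mcA_n', \Delta_n')$ with $\Delta_n' \subseteq \Lambda$, so that $\mcA_n'$ contains $U_n$ and $\lambda_n \in \Delta_n'$; this is where local finiteness and ampleness of $\Lambda$ are used to keep $\Delta_n'$ inside $\Lambda$, while $\overline{\Lambda} = \overline{H}$ ensures $H$-compatibility. Lemma \ref{l:Krieger} then yields an $H$-compatible $(\mcC_n', \Lambda_n')$ with $\Lambda_n' \subseteq H$ and an extension $\Phi_n' \supseteq \Phi_n$. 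In the backward part, I would refine $(\mcC_n', \Lambda_n')$ into a finite unit system $(\mcC_n'', \Lambda_n'')$ with $U_n \in \mcC_n''$ and $\Lambda_n'' \subseteq H$, using that $H$ is a full group (so the required patchwork automorphisms live in $H$); then applying Lemma \ref{l:Krieger} with the two sides swapped and $(\Phi_n')^{-1}$ playing the role of the isomorphism yields $(\mcA_{n+1}, \Delta_{n+1})$ and the extension $\Phi_{n+1}$.

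At the limit, $\bigcup_n \mcA_n = \bigcup_n \mcC_n = \Clopen(X)$ and the $\Phi_n$ amalgamate into a Boolean automorphism of $\Clopen(X)$, giving by Stone duality a homeomorphism $g \in \Homeo(X)$. Property (1) of Lemma \ref{l:Krieger}, preserved at each stage, implies that for every clopen $U$ there exists $h \in H$ with $gU = hU$, whence $g \overline{H} g^{-1} = \overline{H}$. Property (2), together with $\bigcup_n \Delta_n = \Lambda$, gives $g \lambda g^{-1} \in H$ for every $\lambda \in \Lambda$, i.e.\ $g \Lambda g^{-1} \subseteq H$. The main delicate point is carrying out the two refinements at each step while keeping $\Delta_n' \subseteq \Lambda$ and $\Lambda_n'' \subseteq H$; this relies respectively on the ampleness of $\Lambda$ (which provides closure under finite clopen patchwork) and on $H$ being a full group, and beyond that the construction is a routine adaptation of Krieger's argument from \cite{Krieger1980} as exposed in \cite{mellerayInvariantMeasuresOrbit2021}.
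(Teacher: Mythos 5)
Your proposal is correct and follows essentially the same route as the paper: a back-and-forth induction applying Lemma \ref{l:Krieger} alternately with $G=H$ (pushing refinements of the $\Lambda$-side forward) and $G=\Lambda$ (pushing refinements of the $H$-side back via the inverse isomorphism), followed by Stone duality at the limit. The only cosmetic difference is that the paper pre-fixes an exhausting refining sequence of unit systems $(\mcA_n,\Lambda_n)$ for $\Lambda$ and $\Clopen(X)$ and interleaves it at even/odd steps, whereas you do the equivalent bookkeeping on the fly with enumerations of $\Clopen(X)$ and $\Lambda$.
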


Applying this result to $H=\llbracket \alpha \rrbracket$ completes the proof of Theorem \ref{p:tame_gives_ample}.

\begin{proof}
We begin by fixing a refining sequence of finite unit systems $(\mcA_n, \Lambda_n)$ such that $\bigcup \mcA_n= \Clopen(X)$ and $\bigcup \Lambda_n= \Lambda$ (see \cite{Krieger1980} or \cite{mellerayInvariantMeasuresOrbit2021}).

Applying lemma \ref{l:Krieger} (with the role of the full group $G$ being played by $H$ at even steps, and by $\Lambda$ at odd steps), we can build by induction sequences of finite unit systems $(\mcC_n,\Delta_n)$ and $(\mcD_n,\Sigma_n)$, along with isomorphisms $\Phi_n \colon \mcC_n \to \mcD_n$ with the following properties:
\begin{enumerate}
\item For all $n$ $(\mcC_{n+1},\Delta_{n+1})$ refines $(\mcC_n,\Delta_n)$ and $(\mcD_{n+1},\Sigma_{n+1})$ refines $(\mcD_n,\Sigma_n)$;
\item For all $n$ $\Delta_n \subseteq \Lambda$ and $(\mcC_{2n},\Delta_{2n})$ refines $(\mcA_n,\Lambda_n)$;
\item For all $n$ $\Sigma_n \subseteq H$ and $\mcD_{2n+1}$ refines $\mcA_{n}$;
\item For all $n$, for all $A \in \mcC_n$ there exists $h \in H$ such that $\Phi(A)=hA$;
\item For all $n$, ${\Sigma_n}_{|\mcD_n}= \Phi_n {\Delta_n}_{|\mcC_n} \Phi_{n}^{-1}$;
\end{enumerate} 

This construction produces an automorphism $\Phi= \bigcup \Phi_n$ of $\Clopen(X)$, and by Stone duality there exists $g \in \Homeo(X)$ such that $gA=\Phi(A)$ for every clopen $A$. In particular $g$ is $H$-compatible, so $g\overline{H} g^{-1}= \overline{H}$.

We also built an ample subgroup $\Sigma = \bigcup \Sigma_n$ which is contained in $H$, and such that $g\Lambda g^{-1} = \Sigma$. The proof is complete.
\end{proof}

\section{Glossary}
For the reader's convenience we repeat below (in alphabetical order) the definitions of the monoid-theoretical notions that came up during the paper. Below $T$ is a commutative monoid with operation denoted by $+$ and neutral element $0$; we recall that $a \le b$ means that there is some $c$ such that $b=a+c$.

\begin{itemize}
\item $T$ is \emph{almost unperforated} if 
\[\forall x,y \in T \ \forall n \in \N \quad ((n+1)x \le  ny) \Rightarrow (x \le y) \]
\item $T$ is \emph{cancellative} if 
\[ \forall x,y,z \in T \quad (x+y=x+z) \Rightarrow y=z \]
\item $T$ is \emph{conical} if 
\[ \forall u,v \in T \quad (u+v=0) \Rightarrow (u=v=0) \]
This property is satisfied by every $T(\alpha)$.
\item  An element $x \in T$ is \emph{directly finite} if 
\[\forall y \in T \quad (y+x=x) \Rightarrow y=0 \]
\item An element $x \in T$ is a \emph{order unit} if 
\[\forall y \in T \  \exists n \in \N \quad  y \le nx \]
The type of a clopen subset $A$ is a order unit in $T(\alpha)$ iff translates of $A$ cover the whole space.
\item $T$ is a \emph{refinement monoid} if 
whenever $(a_i)_{1 \le i \le n}$ and $(b_j)_{1 \le j \le m}$ are elements of $T$ such that $\sum_{i=1}^n a_i = \sum_{j=1}^m b_j$ there exist elements $(c_{i,j})$ of $T$ such that $a_i=\sum_{j=1}^m c_{i,j}$ for all $i$ and $b_j = \sum_{i=1}^n c_{i,j}$ for all $j$.

\noindent Every $T(\alpha)$ is a refinement monoid.
\item $T$ is \emph{simple} if every nonzero element of $T$ is a order unit. This condition is satisfied in $T(\alpha)$ iff $\alpha$ is minimal.
\item $T$ is \emph{stably finite} if every element of $T$ is directly finite.
\item $T$ is \emph{unperforated} if 
\[\forall x,y \in T \ \forall n \in \N \setminus \{0\}  \quad nx = ny \Rightarrow x=y \]
\item Assume that $T$ is simple and conical, and let $x$ be a nonzero element of $T$. One says that $T$ has the \emph{weak comparability property} if 
\[ \forall a \ne 0 \ \exists k \in \N^* \  \forall b  \quad kb \le x \Rightarrow b \le a \]
(this definition does not depend on the choice of $x$ since we assume that every nonzero element is an order unit)
\end{itemize}

\bibliography{comparison}
\end{document}